\pgfplotsset{width=10cm, compat=1.9}
\newcommand{\vertiii}[1]{{\left\vert\kern-0.25ex\left\vert\kern-0.25ex\left\vert #1 
    \right\vert\kern-0.25ex\right\vert\kern-0.25ex\right\vert}}
\numberwithin{equation}{section}
\newtheorem{theorem}{Theorem}
\newtheorem{lemma}[theorem]{Lemma}
\newtheorem{proposition}[theorem]{Proposition}
\newtheorem{corollary}[theorem]{Corollary}
\theoremstyle{definition}
\newtheorem{definition}[theorem]{Definition}
\theoremstyle{definition}
\newtheorem{remark}[theorem]{Remark}
\numberwithin{theorem}{section}
\def\a{\alpha}
\def\R{\mathbb R}
\def\N{{\mathbb N}}
\def\Z{\mathbb Z}
\def\T{\mathbb T}
\def\A{\mathcal A}
\def\C{\mathbb C}
\def\({\biggl(}
\def\){\biggr)}
\def\<{\mathbf{\langle}}
\def\>{\mathbf{\rangle}}
\newcommand{\Meng}[2]{\left\{#1\mathrel{}\middle|\mathrel{}#2\right\}}
\def\blfootnote{\xdef\@thefnmark{}\@footnotetext}
\title{Weak mixing behavior for the projectivized derivative extension\blfootnote{Keywords: Smooth ergodic theory, approximation-by-conjugation method, almost isometries, weakly mixing diffeomorphisms, analytic approximation, projectivization of tangent bundle.}
\blfootnote{2020 Mathematics Subject Classification:	Primary 37C40; Secondary 37A05, 53C99, 57R50.}}
\author{Shilpak Banerjee\footnote{Department of Mathematics and Statistics, Indian Institute of Technology Tirupati, Yerpedu, Andhra Pradesh 517619, India, E-mail: banerjee.shilpak@gmail.com},\; Divya Khurana\footnote{Department of Mathematics, Indraprastha Institute of Information Technology Delhi (IIIT-Delhi), Okhla Industrial Estate Phase 3, New Delhi 110020, E-mail: divyak@iiitd.ac.in. The author is supported by the University Grants Commission (UGC-JRF), India.},\; Philipp Kunde\footnote{Oregon State University, Department of Mathematics, Corvallis, OR 97331, USA, E-mail: pkunde.math@gmail.com. The research of P.K. is part of the project No. 2021/43/P/ST1/02885 co-funded by the National Science Centre
		and the European Union's Horizon 2020 research and innovation programme under the Marie Sklodowska-Curie
		grant agreement no. 945339.}}
\begin{document}
\maketitle
\lhead{}
\rhead{\thepage}

\begin{abstract}
In both smooth and analytic categories, we construct examples of diffeomorphisms of topological entropy zero with intricate ergodic properties. On any smooth compact connected manifold of dimension $2$ admitting a nontrivial circle action, we construct a smooth diffeomorphism whose differential is weakly mixing with respect to a smooth measure in the projectivization of the tangent bundle. In case of the $2$-torus, we also obtain the analytic counterpart of such a diffeomorphism. The constructions are based on a quantitative version of the ``Approximation by Conjugation'' method, which involves explicitly defined conjugation maps, partial partitions, and the adaptation of a specific analytic approximation technique.
\end{abstract}

\section{Introduction}
The primary objective of this paper is to provide examples of smooth and analytic volume-preserving diffeomorphisms on a manifold $M$ which demonstrate intricate dynamics within the projectivization $\mathbb{P} \mathrm{T}M$ of the tangent bundle of $M$. Our approach relies on the `approximation by conjugation method', also known as the `Anosov-Katok method', which was initially introduced by D.~Anosov and A.~Katok in 1970 \cite{AK}. 
This method allows the construction of transformations with predetermined ergodic or topological properties on manifolds admitting a nontrivial circle action. For instance, \cite{AK} resolved a long-standing question concerning the existence of an ergodic area-preserving smooth diffeomorphism on the disc $\mathbb{D}$.

This approach is still one of the few methods available for constructing diffeomorphisms within the smooth or real-analytic category that satisfy a prescribed specific set of ergodic or topological properties. The produced diffeomorphisms can exhibit an exotic combination of ergodic and topological properties. For instance, the existence of minimal but not uniquely ergodic diffeomorphisms in the smooth category and non-ergodic generic measures for smooth dynamical systems have been explored in \cite{Win} and \cite{DK23}, respectively. The Anosov-Katok method is also the most powerful tool for non-standard smooth realization of measure-preserving transformations \cite{AK}, \cite{Be13}, and \cite{FSW}. Recently, this technique played a significant role in the proof of anti-classification theorems for smooth measure preserving diffeomorphisms  \cite{FW19}, \cite{FW1}, \cite{FW22}. Here, one of the crucial steps in the proof of smooth anti-classification theorems is the smooth realization of a class of symbolic systems (the so-called \emph{circular systems}) obtained by the Anosov-Katok method.

In their foundational paper \cite{AK}, Anosov and Katok provided, among other things, examples of weakly mixing diffeomorphisms in the space $$\A(M) = \overline{\{h\circ S_t\circ h^{-1}: t\in \T^1, h \in \textup{Diff}^{\infty}(M,\mu)\}}^{C^{\infty}},$$ on any manifold admitting a non-trivial $\T^1$ action $\{S_t\}_{t \in \T^1}$. Later Fayad and Saprykina in \cite{FS} constructed smooth weakly mixing diffeomorphism in the restricted space \begin{align}\label{eqn:1.1}
\A_{\alpha}(M) = \overline{\{h\circ S_{\alpha}\circ h^{-1}: h \in \textup{Diff}^{\infty}(M,\mu)\}}^{C^{\infty}},
\end{align}
for any Liouville number $\alpha$. Here, we recall that an irrational number $\alpha$ is Liouville if for each $n$, there exist integers $p$ and $q>1$ such that $0<|\alpha-\frac{p}{q}|<\frac{1}{q^n}$. Both constructions utilize the approximation by conjugation method: Each diffeomorphism is obtained as the limit of a sequence $f_n= H_n\circ S_{\alpha_{n+1}}\circ H_n^{-1}$ with $\alpha_{n+1}\in \mathbb{Q}$ and $H_n=h_1\circ\ldots \circ h_n$, where $h_n$ is a volume-preserving diffeomorphism satisfying $S_{{\alpha}_n}\circ h_n= h_n\circ S_{{\alpha}_n}$. To guarantee convergence of the diffeomorphisms $f_n$ in the space $\A_{\alpha}(M)$ for a prescribed $\alpha$, one needs the construction of more explicit conjugation maps $h_n$ and more precise norm estimates in comparison to convergence proofs in the space $\A(M)$. 

In this article, our objective is to explore ergodic properties of the diffeomorphism along with its differential map with respect to a smooth measure in the projectivization of the tangent bundle. The derivative extension of a smooth map, $f: M\rightarrow M$, on a $m$-dimensional manifold $M$ is a transformation defined on the tangent bundle $\mathrm{T}M$ denoted as $(f,\mathrm{d}f)$. The projective tangent bundle $\mathbb{P} \mathrm{T}M$ is obtained by identifying the tangent space of a point $p\in M$, $T_pM$, with $\R^m$, and considering its projective space $\mathbb{P} \R^m$. It is important to note that a measurable Riemannian metric for a diffeomorphism induces an invariant measure $\bar{\mu}$ for the projectivized derivative extension, which is absolutely continuous on the fibers (see Section~\ref{subsec:projDer}). Such a diffeomorphism, which preserves a measurable Riemannian metric and an absolutely continuous probability measure, is called an IM diffeomorphism; see \cite[section 3]{GKa00} for a description of IM diffeomorphisms and IM group actions.

In {\cite{GKa00}}, R.~Gunesch and A.~Katok constructed a weakly mixing volume-preserving smooth diffeomorphism that preserves a measurable Riemannian metric. For every Liouville number $\alpha$, such diffeomorphisms are dense in the restricted space $\mathcal{A}_{\alpha}(M)$ in the $C^{\infty}$ topology {\cite{GKu15}}. In the analytic category, the third author constructed a real analytic weakly mixing diffeomorphism $f\in\textup{Diff}_{\rho}^{\omega}(\T^m,\mu)$ preserving a measurable Rienmannian metric {\cite{Ku17}}.

It is a natural question to investigate the ergodic properties of the projectivized derivative extension with respect to the measure induced by the measurable invariant Riemannian metric. In fact, all the constructions described in the previous paragraph are as non-ergodic
as possible: Their projectivized derivative extensions are isomorphic to the direct product of the diffeomorphism in the base with the trivial action in the fibers, resulting in each ergodic component intersecting almost every fiber at a single point. In \cite{Ku20}, the third author was able to realize the other
extreme possibility by constructing smooth diffeomorphisms whose differential is ergodic with respect to
such a smooth measure in the projectivization of the tangent bundle. 

In this paper we study further ergodic
properties of the projectivized derivative extension and we focus on weak mixing behavior.  We recall that a measure-preserving system $(X,\mathcal{B},\mu, T)$ is said to be
	weakly mixing if there is no nonconstant function $h\in L^2(X,\mu)$ such that $h(Tx)=\lambda \cdot h(x)$ for some $\lambda \in \C$. Equivalently, $(X,\mathcal{B},\mu, T)$ is weakly mixing iff there exists an increasing sequence $\{m_n\}_{n\in \mathbb{N}}$
such that for any pair $A,B \in \mathcal{B}:$
 $ \mu(B\cap f^{-m_n}(A)) \longrightarrow \mu(B)\mu(A)$. We explore this aspect in both the smooth and analytic settings. Hereby, we solve Problems~6.2 and~10.7 in~\cite{Ku_book24}. Our approach bases upon a new weak mixing criterion specifically tailored for the projectivized derivative extension. It requires the construction of appropriate smooth maps on the projectivized tangent bundle that yield a suitable distribution of partition elements. We refer to Section~\ref{subsec:outline} for an overview of our construction. Here, we present the smooth version of our main result.

\begin{theorem}\label{thm:1.1}
Let $M$ be a smooth compact and connected  manifold of dimension $2$ with a non-trivial circle action $\mathcal{S}= \{S_t\}_{t \in \R}, S_{t+1}= S_t,$ preserving a smooth volume $\mu.$ If $\alpha\in \R$ is Liouville, there exists a measure-preserving weakly mixing diffeomorphism $f \in \A_{\alpha}(M),$ whose projectivized derivative extension $(f, \mathrm{d}f)$ is weakly mixing with respect to the measure $\bar{\mu}$ in the projectivized tangent bundle $\mathbb{P} \mathrm{T}M$ which is absolutely continuous in the fibers.  
\end{theorem}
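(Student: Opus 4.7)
The plan is to realize $f$ as a $C^\infty$ limit of periodic diffeomorphisms
\[
f_n = H_n \circ S_{\alpha_{n+1}} \circ H_n^{-1}, \qquad H_n = h_1 \circ \cdots \circ h_n,
\]
with $\alpha_{n+1} = p_{n+1}/q_{n+1}$ rational and each $h_n$ a $\mu$-preserving diffeomorphism commuting with $S_{\alpha_n}$. Following the usual inductive scheme, $\alpha_{n+1}$ is chosen by Liouville approximation of $\alpha$, so that $|\alpha - \alpha_{n+1}|$ is small enough relative to $\|H_n\|_{C^{n+1}}$ to force $f_n \to f$ in $C^\infty$ and place the limit in $\A_{\alpha}(M)$. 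A measurable $f$-invariant Riemannian metric $\omega = \lim_n (H_n^{-1})^\ast g_0$, for any smooth $\mathcal{S}$-invariant background metric $g_0$, supplies the fibrewise absolutely continuous probability measure $\bar\mu$ on $\mathbb{P}\mathrm{T}M$ preserved by the extension $(f,\mathrm{d}f)$.

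Next, I would formulate a weak mixing criterion tailored to the derivative extension: given a sequence of partial partitions $\tilde\eta_n$ of $\mathbb{P}\mathrm{T}M$, each built as a product of a ``thin rectangle'' partition in the base with short arcs in the projective fibre, with $\bar\mu\bigl(\bigcup_{\Gamma \in \tilde\eta_n}\Gamma\bigr) \to 1$, together with an increasing sequence of times $m_n$, the condition
\[
\sum_{\Gamma \in \tilde\eta_n} \Bigl| \bar\mu\bigl((f,\mathrm{d}f)^{-m_n}(B) \cap \Gamma\bigr) - \bar\mu(\Gamma)\,\bar\mu(B) \Bigr| \longrightarrow 0
\]
for every $B$ in a suitable dense algebra of measurable subsets of $\mathbb{P}\mathrm{T}M$ should force weak mixing of $((f,\mathrm{d}f),\bar\mu)$; projecting the same partitions to $M$ simultaneously yields weak mixing of $(f,\mu)$. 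A standard density argument against nontrivial $L^2$ eigenfunctions then delivers the equivalence.

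The crux is designing $h_n$ so that this criterion can be verified for the periodic approximant $f_n$ and transferred to $f$ by $C^1$-closeness on a set of large $\bar\mu$-measure. On a fundamental domain for $S_{1/q_n}$ I would build $h_n$ as a composition of three layers: a Fayad--Saprykina-type cutting-and-stacking map that spreads any thin base rectangle nearly uniformly across $M$ under an iterate $f^{m_n}$ with $m_n$ a suitable multiple of $q_{n+1}$; a shear/twist map whose differential rotates the projective fibre at a rate commensurate with the base geometry, so that on each sub-rectangle $\mathrm{d}f^{m_n}$ distributes short arcs almost uniformly over $\mathbb{P}^1$; and a compensating $\mu$-preserving map restoring commutation with $S_{\alpha_n}$. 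Balancing these three layers is what yields simultaneous equidistribution of the atoms of $\tilde\eta_n$ in the base and fibre directions.

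The principal obstacle is a tension between the two targets. Making the fibre-mixing layer act strongly enough that $\mathrm{d}H_n$ spreads angles uniformly after iteration inevitably inflates $\|H_n\|_{C^{n+1}}$, which in turn demands a much sharper Liouville bound on $|\alpha - \alpha_{n+1}|$ than base weak mixing alone would. Reconciling these constraints --- in particular, verifying that a single sequence $m_n$ witnesses equidistribution simultaneously in base and fibre while keeping the product of $|\alpha - \alpha_{n+1}|$ with an appropriate power of $\|H_n\|_{C^{n+1}}$ summable --- is where the new technical input lives, and is precisely the step at which the weak mixing criterion above must be deployed hand in hand with the explicit combinatorial bookkeeping of the partial partitions $\tilde\eta_n$ on $\mathbb{P}\mathrm{T}M$.
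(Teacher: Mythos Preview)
Your broad architecture---AbC scheme, invariant metric $\omega_\infty=\lim_n(H_n^{-1})^\ast g_0$, a weak mixing criterion for $(f,\mathrm{d}f)$ phrased on product partitions of $\mathbb{P}\mathrm{T}M$---matches the paper. The gap is in your mechanism for fibre equidistribution.

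You propose a ``shear/twist map whose differential rotates the projective fibre''. This fails on two counts. First, the differential of a shear $(\theta,r)\mapsto(\theta+br,r)$ is $\begin{pmatrix}1&b\\0&1\end{pmatrix}$, which does not rotate projective lines uniformly: under iteration it squashes almost every direction toward the horizontal, the opposite of equidistribution over $\mathbb{P}^1$. Second, and more seriously, if $h_n$ contains a genuinely non-isometric layer on a set of positive measure, the pullbacks $(H_n^{-1})^\ast g_0$ acquire unbounded eigenvalue ratios and the limit $\omega_\infty$ does not exist---so the invariant measure $\bar\mu$ you invoke is never defined. You state the metric formula but do not explain why the limit converges; in the paper this is guaranteed precisely because $h_n$ is an \emph{exact isometry} on every element of a refined partition $\zeta_n$ covering almost all of $M$ (deviation from isometry equals zero).

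The paper's device is combinatorial rather than dynamical: the conjugation $h_n=g_n\circ\phi_n$ with $\phi_n=i_n\circ\tilde\phi_n$ is built so that on partition elements the shear $g_n$ acts as a pure \emph{translation} (hence $\mathrm{d}g_n=\mathrm{id}$ there), while $i_n$ is a \emph{piecewise rigid rotation} by angle $\frac{l\pi}{k_n}$ on the $l$-th sub-domain, $l=0,\dots,k_n-1$. A base partition element $\tilde I_n$ is deliberately split as $\bigcup_{l=0}^{k_n-1}\tilde I_{n,l}$, and $(\phi_n,\mathrm{d}\phi_n)$ sends $\tilde I_{n,l}\times T_j$ into the fibre arc $T_{l+j\bmod k_n}$. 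Summing over $l$ covers every fibre arc exactly once, which is the $(\gamma,\delta,\varepsilon_1,\varepsilon_2)$-distribution in the tangent direction. Because every piece is a rigid motion, the isometry property and hence the metric survive. The norm blow-up you worry about is controlled not by a trade-off with fibre-mixing strength, but by the Liouville freedom in choosing $q_{n+1}$ after all conjugation maps at stage $n$ (including $i_n$) are fixed; the fibre-mixing angles $\frac{l\pi}{k_n}$ depend only on $k_n$, not on $q_n$.
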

The Anosov-Katok constructions are renowned for their flexibilty. In this article, we have discussed several instances exploring intricate dynamics within the smooth category. However, challenges emerge in the real-analytic category, as emphasized in \cite[Section 7.1]{FK} and \cite[Section 6.3]{BaKu}: Maps with notably large derivatives in the real domain or their inverses tend to display singularities in a small complex neighborhood. For a real analytic family $S_t, 0 \leq t \leq  t_0,$ with $S_0 = id,$ the family $h \circ S_t \circ h^{-1}$ is expected to display singularities very proximate to the real domain for any $t > 0.$ Consequently, the domain of analyticity for maps in the form of $f_n = H_n\circ S_t\circ H_n^{-1}$ diminishes progressively during each construction step, leading to a limiting diffeomorphism that is not analytic. Hence, there is a need to identify specific conjugation maps that can be explicitly inverted, ensuring both the map and its inverse maintain analyticity across a broad complex domain. 

B.~Fayad and M.~Saprykina successfully constructed examples of volume-preserving weakly mixing real-analytic diffeomorphisms on $\T^2$ using highly explicit analytic conjugation maps \cite{FS}. Additionally, B.~Fayad and A.~Katok devised uniquely ergodic examples on any odd-dimensional sphere in \cite{FaKa13}. Recently, G.~Farre in \cite{Farre23} combined both approaches to find weakly mixing real-analytic diffeomorphisms on odd-dimensional spheres. On the other hand, P.~Kunde and S.~Banerjee  developed a more general approximation technique utilizing block-slide type maps \cite[Theorem 5]{BaKu} on the finite-dimensional torus. This approach provided sufficient flexibility to replicate the construction process and obtain numerous analytic counterparts of Anosov-Katok constructions, as seen in \cite{BaKu}. The tool of block-slide type maps is also used in the aforementioned construction of real-analytic weakly mixing diffeomorphisms preserving a measurable Riemannian metric \cite{Ku17}. However, the derivatives of such block-slide type maps are the identity on large parts of the space. Hence, this analytic approximation technique is unable to yield a specific distribution for the elements in the space and tangent direction, thus hindering the achievement of ergodicity for the projectivized derivative extension in the analytic category.

In this article, we have achieved an even stronger property than ergodicity- the weak mixing property- for the projectivized derivative extension in the analytic category, specifically for the torus case. We modified an analytic approximation scheme, as discussed in \cite[Theorem 1.8]{Be22}, where P.~Berger introduced a specific technique for approximating a smooth transformation within the infinite annulus $[0,1]\times \R$ by closely approximating it with an analytic transformation under certain conditions. This technique is built on the generators of groups of Hamiltonian maps and their flows, employing specific conditions. This distinctive approach plays a crucial role in our analytic construction in Section~\ref{sec:4:4.a} which involves adapting the approximation technique for the torus case. This requires exercising sufficient control over the proximity between smooth and analytic conjugation maps, and establishing the convergence of the Anosov-Katok method while maintaining the desired dynamical properties, as outlined in Section 3, within our setup.

\begin{theorem}\label{thm:1.2}
    There exists a real analytic diffeomorphism $\hat{f} \in \textup{Diff}_{\infty}^{\omega}(\mathbb{T}^2,\mu),$ whose projectivized derivative extension $(\hat{f}, \mathrm{d}\hat{f})$ is weakly mixing with respect to the measure $\bar{\mu}$ in the projectivized tangent bundle $\mathbb{P}T\mathbb{T}^2$. Additionally the complexification of this $\hat{f}$ is entire. 
\end{theorem}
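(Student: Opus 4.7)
The plan is to construct $\hat{f}$ as a limit $\hat{f} = \lim_{n \to \infty} \hat{f}_n$, where $\hat{f}_n = \hat{H}_n \circ S_{\alpha_{n+1}} \circ \hat{H}_n^{-1}$ with $\alpha_{n+1} = p_{n+1}/q_{n+1}$ rational and chosen so that $\alpha_n \to \alpha$ Liouville with extremely fast convergence, and $\hat{H}_n = \hat{h}_1 \circ \cdots \circ \hat{h}_n$ is a composition of measure-preserving real-analytic diffeomorphisms $\hat{h}_i$ of $\T^2$, each of which commutes with $S_{\alpha_i}$. This mirrors the scheme of Theorem~\ref{thm:1.1}, but every conjugation map must now be globally real-analytic with entire complexification, and the convergence $\hat{f}_n \to \hat{f}$ must take place in $\textup{Diff}^{\omega}_{\infty}(\T^2,\mu)$.

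The first step is to formulate a weak-mixing criterion for the projectivized derivative extension suited to the analytic category, parallel to the one used for Theorem~\ref{thm:1.1}. This criterion will demand that for a dense family of measurable rectangles $A, B \subset \mathbb{P}\mathrm{T}\T^2$ there exist iterates $m_n$ of $\hat{f}_n$ along which a partial partition of $\mathbb{P}\mathrm{T}\T^2$, built from images of standard rectangles under $\mathrm{d}\hat{H}_n$, gets uniformly distributed by $(\hat{f}_n, \mathrm{d}\hat{f}_n)^{m_n}$ both in the base and in the projective fibers. Second, I would start from smooth conjugation maps $h_n$ of the torus that successfully drive weak mixing in $\mathbb{P}\mathrm{T}\T^2$ in the $C^{\infty}$ category (the construction underlying Theorem~\ref{thm:1.1}, restricted to $\T^2$). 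The essential feature is that the derivatives $\mathrm{d}h_n$ act non-trivially on fibers, thoroughly rotating the projective direction, which is exactly what a block-slide approximation scheme cannot accomplish since such maps are identity on the bulk of the torus.

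Third, and this is the main technical step, I would replace each smooth $h_n$ by a real-analytic $\hat{h}_n$ using an adaptation of Berger's analytic approximation technique from \cite[Theorem 1.8]{Be22}, built on generators of Hamiltonian groups and their flows. This approach allows one to approximate a smooth map in $C^k$-norm by an analytic map whose complexification extends to all of $\C^2$ while preserving the volume form, and crucially, whose derivative varies nontrivially everywhere — overcoming the key limitation of block-slide type maps. The adaptation requires transplanting Berger's annulus-based construction to $\T^2$, which amounts to expressing the ingredients of $h_n$ as compositions of Hamiltonian flows of specific generators and then truncating these flows analytically. I would keep precise control over (i) the $C^k$-distance $\|\hat{h}_n - h_n\|_{C^k}$ as a function of the required analyticity width, and (ii) the size of the complex domain on which $\hat{h}_n$ extends, insisting on entire complexification as in \cite{FS} and \cite{BaKu}.

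Fourth, with these quantitative approximations in hand, I would run the Anosov--Katok inductive convergence argument in $\textup{Diff}^{\omega}_{\infty}(\T^2,\mu)$: choose $\alpha_{n+1}$ close enough to $\alpha_n$ (in terms of the analyticity and derivative bounds of $\hat{H}_n$) that $\hat{f}_{n+1}$ is within $2^{-n}$ of $\hat{f}_n$ in the relevant analytic norm, while simultaneously being close enough to $\hat{f}_n$ in $C^{m_n}$-norm that the finite-time weak mixing behavior of $(\hat{f}_n,\mathrm{d}\hat{f}_n)^{m_n}$ on prescribed partial partitions of $\mathbb{P}\mathrm{T}\T^2$ is inherited by the limit. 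The main obstacle will be the fourth step's twofold balancing act: the analytic approximation forces one to shrink the complex strip on which $\hat{h}_n$ is holomorphic at each stage, while the weak-mixing criterion requires $m_n$ to grow very fast and the derivatives of $\hat{H}_n$ in the fiber direction to be precisely controlled. Resolving this tension is what necessitates the refinement of Berger's scheme in Section~\ref{sec:4:4.a}, and it is here that the construction is genuinely more delicate than in \cite{Ku17}, since one cannot afford large identity regions in the conjugation maps.
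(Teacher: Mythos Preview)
Your outline matches the paper's architecture: construct $\hat f$ as a limit of analytic AbC diffeomorphisms $\hat f_n=\hat H_n\circ R_{\alpha_{n+1}}\circ\hat H_n^{-1}$, obtain each $\hat h_n$ by approximating the smooth conjugates of Theorem~\ref{thm:1.1} via Berger's entire-approximation technique adapted to $\T^2$ (Theorem~\ref{thm:4:4.1}, Corollary~\ref{coro:4:4.1}), and verify an analytic version of the weak-mixing criterion for the derivative extension (Proposition~\ref{prop:6.6.3b}, via Lemmas~\ref{lem:6.6.4c} and~\ref{lem:6.4d}).

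There is one genuine omission and one misconception. The omission: you never say where the $(\hat f,\mathrm d\hat f)$-invariant measure $\bar\mu$ comes from. In this paper $\bar\mu$ is not a fixed ambient measure; it is induced by an $\hat f$-invariant \emph{measurable Riemannian metric} $\omega_\infty$ (Section~\ref{subsec:projDer}), whose existence requires that the analytic conjugates $\hat h_n$ be near-isometries on the fine partition $\zeta_n$, namely $\text{dev}_{I_n}(\hat h_n)\le 2\mathfrak d_n/\|DH_{n-1}\|_0^2$ (Proposition~\ref{prop:metric}, Lemma~\ref{lem:deviation}). This is a separate quantitative demand on the approximation, on top of the $C^1$-closeness needed for the distribution property, and without it the theorem is empty because $\bar\mu$ is undefined. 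Your plan should make this step explicit. The misconception: you worry that the approximation ``forces one to shrink the complex strip on which $\hat h_n$ is holomorphic at each stage.'' The whole point of Berger's method is that it delivers maps in $\text{Ham}_\infty^\omega$, i.e.\ with \emph{entire} complexification, so no strip ever shrinks; convergence in $\textup{Diff}_\infty^\omega$ is then obtained simply by taking $|\alpha_{n+1}-\alpha_n|$ small relative to $|D\hat H_n|_n$ on an expanding sequence of strips $\Omega_n$ (Lemma~\ref{lem:4:4.2b}). The real tension to manage is between the proximity $\epsilon_n$ of $\hat h_n$ to $h_n$ (which must be tiny for both the isometry and distribution estimates) and the parameter $l_n$ in $\alpha_{n+1}$, and this is resolved by the freedom to choose $l_n$ arbitrarily large after $\hat h_n$ is fixed (equation~\eqref{eqn:4:4.2}).
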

These constructions represent the only known instances of volume-preserving diffeomorphisms whose differentials are weakly mixing with respect to a smooth measure in the projectivization of the tangent bundle, applicable to both smooth and analytic categories.

 Regarding the successful implementation of analytic AbC schemes, we mention the recent breakthrough work \cite{Be24} by P.~Berger. Based on a so-called \emph{Anosov-Katok principle} it enables the realization, by analytic symplectomorphisms on annulus, sphere or disc, of
dynamical properties which are $C^0$-realizable by the AbC method. However, the current results only demonstrate that an Anosov-Katok principle holds for transitivity. Therefore, this approach does not directly allow the realization of weakly mixing analytic diffeomorphisms, let alone properties for the projectivized derivative extension.

\textbf{Acknowledgement}: D.~Khurana thanks Jagiellonian University, Krakow, for their hospitality and the NBHM for supporting international travel, where the initial idea of the paper was developed.

\section{Preliminaries}
\subsection{A primer on symplectic dynamics}

A \emph{symplectic form} $\omega$ on a smooth manifold $M$ is a closed non-degenerate diferential $2$-form. A smooth manifold equipped with a symplectic form is called a \emph{symplectic manifold}. 

\definition[Symplectomorphism]
A symplectomorphism $f:(M,\omega)\to(M,\omega)$ is a smooth diffeomorphism satisfying $f^*\omega=\omega$, where $f^*$ is the pullback of $f$. We denote the set of all symplectic diffeomorphsims of $M$ by $\text{Symp}(M,\omega).$ 

\definition[Hamlitonian Diffeomorphism]
A symplectomorphism $\phi \in \text{Symp}(M, \omega)$ is said to be a Hamiltonian diffeomorphism if there exists a Hamiltonian isotopy $h_t$ such that $\phi = h_1$. We say that $h_t$ is a Hamiltonian isotopy if there exists a smooth family of functions $H_t : M \rightarrow \mathbb{R}$ such that
$i_{(h_t)}\omega = \mathrm{d} H_t$. In other words, if there exist a smooth isotopy  $H:[0,1]\times M\rightarrow M$ which can be associated to the time dependent vector field  $X_{H_t}$ defined by $i_{(X_{H_t})}(\omega) = \mathrm{d} H_t$. Let  $\psi_t$ be the flow associated with the time dependent vector field  $X_{H_t}$.  A diffeomorphism $\phi$ is said to be
Hamiltonian if it is a time-1 map of a time-dependent Hamiltonian system i.e. $\phi = \psi_1$. 

Let $\text{Ham}^{\infty}(M,\omega)$ be the space of all smooth Hamiltonian diffeomorphism of $M$ and $\text{Ham}^{\omega}(M,\omega)$ be space of all entire Hamiltonian diffeomorphism of $M$.
For smooth volume form $\mu$,  $\text{Ham}^{\infty}(M,\mu)\subseteq \text{Diff}^{\infty}(M,\mu)$. 

\begin{lemma}\label{lem: 2.2.3a}
  Any smooth area-preserving diffeomorphism $f:[0,1]^2 \rightarrow [0,1]^2$ that acts as identity in the neighborhood of the boundary of $[0,1]^2$ is a  Hamiltonian diffeomorphisms on the torus $\T^2$.
\end{lemma}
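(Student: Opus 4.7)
The plan is to extend $f$ to the torus $\T^2 = [0,1]^2/{\sim}$ and exhibit the extension as the time-$1$ map of a Hamiltonian flow. Since $f$ equals the identity in a neighbourhood of $\partial[0,1]^2$, the extension $\tilde f: \T^2 \to \T^2$ is a well-defined smooth diffeomorphism preserving the area form $\omega$ associated with $\mu$. Choosing $\varepsilon>0$ with $f = \mathrm{id}$ outside $[\varepsilon, 1-\varepsilon]^2$, the open set $D := (0,1)^2 \subset \T^2$ is a topological disc containing the closure of $\{x \in \T^2 : \tilde f(x) \neq x\}$.

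I next invoke the classical fact that the group $\mathrm{Symp}_c(D,\omega)$ of compactly supported area-preserving diffeomorphisms of $D$ is path-connected. This yields a smooth isotopy $\{f_t\}_{t\in[0,1]}$ with $f_0 = \mathrm{id}$, $f_1 = \tilde f|_D$, and each $f_t \in \mathrm{Symp}_c(D,\omega)$. The generating vector field $X_t := \frac{\mathrm{d}}{\mathrm{d}t}f_t \circ f_t^{-1}$ is divergence-free with compact support in $D$, so $\alpha_t := i_{X_t}\omega$ is a closed $1$-form with compact support. Since $D$ is contractible, $\alpha_t = \mathrm{d} H_t$ for some $H_t \in C^\infty(D,\R)$; because $\mathrm{d}H_t = 0$ on the connected open set $D \setminus \mathrm{supp}(\alpha_t)$, $H_t$ is constant there, and after subtracting this constant I may assume $H_t$ has compact support in $D$. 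Extending $H_t$ by zero to $\T^2$ gives a smooth time-dependent Hamiltonian whose flow extends $\{f_t\}$ by the identity outside $D$ and whose time-$1$ map is $\tilde f$. Hence $\tilde f \in \mathrm{Ham}^\infty(\T^2,\omega)$.

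The main obstacle is the path-connectedness of $\mathrm{Symp}_c(D)$. This is well known but non-trivial: a clean route is to combine Smale's theorem $\mathrm{Diff}_c(D^2) \simeq \ast$ (yielding a smooth compactly supported isotopy $g_t$ from $\mathrm{id}$ to $\tilde f|_D$) with Moser's trick applied to the family $\omega_t := g_t^*\omega$. These forms are cohomologous in $H^2_c(D)$ because each $g_t$ is a diffeomorphism with compact support, so $\int_D \omega_t = \int_D \omega$; Moser's argument then produces compactly supported $\psi_t$ with $\psi_t^*\omega_t = \omega$, and $g_t \circ \psi_t$ is the required area-preserving isotopy. Once this connectivity is in hand, the cohomological step $\alpha_t = \mathrm{d}H_t$ and the extension of $H_t$ to $\T^2$ are routine.
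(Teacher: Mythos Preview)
Your proof is correct and follows essentially the same strategy as the paper's: extend to $\T^2$, exploit that the support lies in a contractible disc, obtain a symplectic isotopy to the identity, and conclude Hamiltonian via the Poincar\'e lemma. The paper's argument is considerably sketchier---it simply invokes ``Moser's theorem'' and ``Poincar\'e's lemma'' without clearly separating the two logical steps you isolate (existence of a compactly supported isotopy, then exhibiting the generating vector field as Hamiltonian). Your explicit appeal to Smale's theorem for the contractibility of $\mathrm{Diff}_c(D^2)$, followed by the Moser correction in $H^2_c(D)$ to make the isotopy area-preserving, is precisely the content the paper's sketch is gesturing at. Your version has the further advantage that the compactly supported $H_t$ extends by zero to $\T^2$ in a manifestly well-defined way, so no flux considerations on the torus side are needed; the paper instead phrases the isotopy as living on $\T^2$ and leaves the vanishing of flux implicit.
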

\begin{proof}
Since the diffeomorphism $f: [0,1]^2 \to [0,1]^2$ acts as the identity close to the boundary of $[0,1]^2$, it can be extended to a diffeomorphism $\tilde{f}$ on $\mathbb{T}^2$ by identifying opposite edges of $[0,1]^2$. Note that any closed non-degenerate 2-form on a simply connected domain of $\R^n$, or a contractible symplectic manifold, is exact due to Poincaré's lemma \cite{Silva14}. Additionally, the exactness of the symplectic form on a contractible manifold implies the existence of a Hamiltonian vector field associated with any closed $2$-form. Furthermore, Moser's theorem guarantees a continuous family of area-preserving diffeomorphisms $F_t: \mathbb{T}^2 \to \mathbb{T}^2$ such that $F_1 = \tilde{f}$ and $F_0$ represents the identity map on $\mathbb{T}^2$, implying that $\tilde{f}$ is isotopic to the identity through area-preserving diffeomorphisms on the torus.

Since $[0,1]^2$ is a contractible domain, the diffeomorphism $f$ within $[0,1]^2$, when extended to the torus as $\tilde{f}$, is indeed a Hamiltonian diffeomorphism on the torus, following from Poincaré's lemma and Moser's theorem.
\end{proof}
\remark The same argument extends the result to $n$-dimensional torus, as any smooth area-preserving map $f:[0,1]^n\rightarrow [0,1]^n$ that acts as the identity near the boundary is a Hamiltonian diffeomorphism on $\T^n.$
\subsection{Projectivized derivative Extension} \label{subsec:projDer}
We refer to \cite[section 2.1]{GKu15} for useful definitions and notations. Subsequently, we introduce the concept of the invariant measure for the projectivized derivative extension. 
We consider the derivative extension of a smooth diffeomorphism, $f: M \rightarrow M$, as a transformation defined on the tangent bundle $TM,$ and denoted as $(f, \mathrm{d}f)$. For any point $p \in M$, we identify the tangent space $T_pM$ with $\mathbb{R}^2$ in the case of a 2-dimensional manifold. Subsequently, we consider its projective space $\mathbb{P}\mathbb{R}^2$, which is diffeomorphic to the circle $\T^{1}$ and introduce the notation $[a, b] \subset \mathbb{P}\mathbb{R}^2$ to describe the allowed values for the spherical coordinate $\phi \in \mathbb{R} / \pi \mathbb{Z}$. This results in the formation of the projectivized tangent bundle denoted as $\mathbb{P}\mathrm{T}M$. We consider the projectivized derivative extension of a diffeomorphism $f : M \rightarrow M$ on the projectivized tangent bundle $\mathbb{P}\mathrm{T}M,$ denoted as $(f,\mathrm{d}f)$ again by abusing the notation. 
We employ the notation $c \times [a, b] \subset \mathbb{P}\mathrm{T}M,$ with $c \subset M,$ to represent the subset in $\mathbb{P}\mathrm{T}M$ comprising base points $x \in c$ and corresponding spherical coordinates $\phi \in [a, b]$.
\subsubsection{Existence of invariant measure on the \texorpdfstring{$\mathbb{P}\mathrm{T}M$:}{lg}}\label{sec:2.2.1}
The existence of an invariant measure is explored following the framework detailed in \cite[chapter~5.1]{Ch97}. Initially, we consider the cotangent bundle $TM^*$ along with the projection maps $\pi_1: TM \rightarrow M$ and $\pi_2: TM^* \rightarrow M$. Subsequently, we define the canonical 1-form $\omega$ on $TM^*$ by $\omega|_\tau = \pi_2^{*}\tau$, where $\omega|_\tau$ denotes 1 form $\omega$ 
evaluated at $\tau \in TM^*$. Additionally, the canonical 2-form $\Omega$ on $TM^*$ is defined as $\Omega = \mathrm{d}\omega$, exhibiting symplectic properties.

Let $M$ be a Riemannian manifold, and $V: M \rightarrow \mathbb{R}$ be a function. We examine the Lagrangian $L: TM \rightarrow \mathbb{R}$ given by $L(\zeta) = |\zeta|^2 - V \circ \pi_1(\zeta)$, where $|\zeta|$ is calculated using the Riemannian metric. Associated with this Lagrangian, we define a bundle map $F_L: TM \rightarrow TM^*$ defined by $F_L(\zeta)(\eta) = \frac{d}{dt}(L(\zeta + t\eta))|_{t=0}$ for $p \in M,$ and $\zeta, \eta \in T_pM$. Subsequently, we define $\Theta = F_L^*\Omega$ and $\nu = F_L^*\omega$.

In \cite[Chapter 5.1]{Ch97}, they examine the differential form $\nu \wedge \Theta$ on the unit tangent bundle $SM$, which is proven to be locally a product, up to a constant multiple, of the Riemannian volume on $M$ with the Lebesgue 1-form on the unit tangent spheres of $M$ with respect to the Riemannian metric. Particularly, for any $\nu \wedge \Theta$-integrable function $g$ on $SM$, the ``integration over the fibers"
$$\int_{SM} g \nu \wedge \Theta = c \int_M \mathrm{d}\text{Vol}(p) \int_{S_pM} g|_{S_pM} \mathrm{d}\mu_p
$$
Here, $\text{Vol}$ represents the volume form induced by the Riemannian metric, and $\mu_p$ is the standard Borel measure on the tangent sphere $S_pM$ concerning the Riemannian metric.
Applying the same approach, we deduce a similar formula for the constructed invariant measurable Riemannian metric $\omega_{\infty}$ and for any integrable function on $\mathbb{P}\mathrm{T}M$. The corresponding measure will be denoted by $\bar{\mu}$. Furthermore, the measure induced by the measurable Riemannian metric $\omega_{\infty}$ in our explicit construction aligns with the measure $\mu$ on $M$. As $\omega_{\infty}$ is $f$-invariant, we infer that $\bar{\mu}$ is $(f, \mathrm{d}f)$-invariant.
\subsection{Analytic topology}
Real analytic diffeomorphisms of $\T^2$ homotopic to the identity have a lift of type 
$$F(x_1,x_2) = (x_1 + f_1(x_1,x_2), x_2 + f_2(x_1,x_2)),$$ where the function $f_i: \R^2 \rightarrow \R$ are real analytic and $\Z^2$ periodic for $i=1,2.$ Any real-analytic $\Z^2$-periodic function defined on $\R^2$ can be extended as a holomorphic (complex analytic) function from some complex neighbourhood of $\R^2$ in $\C^2$. 
For any fixed $\rho > 0$, we define the complex domain $\Omega_{\rho}$ as follows: 
\begin{align}\label{eqn:2:2.3a}
\Omega_{\rho}= \{(z_1,z_2) \in \C^2 \ :\ |\Im{z_i}| < \rho , \ i =1, 2\}.
\end{align}
Now, for a function $f$ defined on this set, we define the norm as $|f|_{\rho} = \sup_{z\in \Omega_{\rho}}|f(z)|.$

Denote the space $C_{\rho}^{\omega}(\mathbb{T}^2)$ as the space of all $\mathbb{Z}^2$-periodic real-analytic functions on $\mathbb{R}^2$ that extend to holomorphic functions on $\Omega_{\rho}$, with $|f|_{\rho}<\infty$. Let $\textup{Diff}_{\rho}^{\omega}(\mathbb{T}^2, \mu)$ be the set of all measure-preserving real-analytic diffeomorphisms of $\mathbb{T}^2$ homotopic to the identity, whose lift $F$ to $\mathbb{R}^2$ extends to a holomorphic functions on $\Omega_{\rho}$. 

\definition For two functions $f, g\in \textup{Diff}_{\rho}^{\omega}(\T^2, \mu)$, we define the norm $ | f |_{\rho}  $ and metric $d_{\rho}^{\omega}(f,g)${\footnote{ Here, we denote the metric $d_r$ for $C^r$ and $d_\rho^{\omega}$ for $C^{\omega}_{\rho}$ topology.}}  as follows:
\begin{align*}
    |f|_{\rho} = \max_{i=1,2}|f_i|_{\rho} \quad  \text{and} \quad
 d_{\rho}^{\omega}(f,g)= \max_{i=1,2}\{\inf_{p\in \Z} |f_i-g_i +p|_{\rho}\}.
 \end{align*}
 Moreover, for a diffeomorphism $T$ with lift $\widetilde{T}(x_1,x_2)= (T_1(x_1,x_2),T_2(x_1,x_2))$ we define 
$$|DT|_{\rho} = \max \left\{ \left| \frac{\partial T_i}{ \partial x_i} \right|_{\rho} \ :  \ i,j = 1,2 \right\}.$$
Additionally, define $\text{Ham}_{\rho}^{\omega}(\T^2, \mu)$ as a subset of $\textup{Diff}_{\rho}^{\omega}(\T^2, \mu)$, containing all the real analytic Hamiltonian diffeomorphisms of $\T^2.$  Next, we denote $\textup{Diff}^{\omega}_{\infty}(\T^2,\mu) \subseteq \textup{Diff}_{\rho}^{\omega}(\T^2,\mu)$ for any $\rho>0$, be the space containing all the real-analytic measure-preserving diffeomorphism of $\T^2$ whose lift $F$ to $\R^2$ extends to an entire function on $\mathbb{C}^2$. Analogously, define $\text{Ham}_{\infty}^{\omega}(\T^2, \mu)$ as a subset of $\textup{Diff}_{\infty}^{\omega}(\T^2, \mu)$, containing all the real analytic Hamiltonian diffeomorphisms of $\T^2.$ 
\subsection{Smooth topology}
Smooth diffeomorphisms of $\T^2$ homotopic to the identity have a lift of type
$$\widetilde{F}(x_1, x_2) = (x_1 + f_1(x_1, x_2), x_2 + f_2(x_1, x_2) ),$$
where $f_i: \mathbb{R}^2 \longrightarrow \mathbb{R}$ are $\mathbb{Z}^2$ periodic for $i= 1,2$. We can use the smooth topology of $\mathbb{R}^2$ defined by the norms as follows:
For any continuous function $f:(0,1)^2 \longrightarrow \mathbb{R},$ the norm is given by
$$||f||_0 := \sup_{z\in (0,1)^2}|f(z)|.$$
The partial derivative of a function is denoted as follows: for $(a_1,a_2)\in \mathbb{N}^2,$ where $|a|= a_1+a_2,$
$$\mathrm{D}_a := \frac{\partial^a}{\partial_{r}^{a_2}\partial_{\theta}^{a_1}}.$$

Consider $\textup{Diff}^k(\mathbb{T}^2)$ be the space of $k$-differentiable diffeomorphisms of the torus. For any $F,G \in \textup{Diff}^k(\mathbb{T}^2)$, denote their lifts by $\widetilde{F}$ and $\widetilde{G}$. For mappings $F : \mathbb{R}^2\longrightarrow \mathbb{R}^2$, let $F_i$ represent the i-th coordinate function, and denote 
$$\vertiii{F}_k: = \max \{ \|\mathrm{D}_aF_i\|_0, \|\mathrm{D}_a(F)^{-1}_i\|_0 \ | \ i=1,2, \ 0\leq |a| \leq k \}.$$
\definition Define the distances between two diffeomorphisms $F, G \in \textup{Diff}^k(\mathbb{T}^2):$ 
$$ \tilde{d}_0(F,G)=\max_{i=1,2}\{ \inf_{p\in \mathbb{Z}} ||(\widetilde{F}_i - \widetilde{G}_i) +p||_0\},$$
$$\tilde{d}_k(F,G)=\max \{\tilde{d}_0(F,G),|| \mathrm{D}_a(\widetilde{F}_i - \widetilde{G}_i)||_0 | \ i=1,2, \  1 \leq |a| \leq k\}.$$
We will employ a metric that measures the distance between diffeomorphisms and their inverses:
$$ d_k(F,G)=\max \{ \tilde{d}_k(F,G), \tilde{d}_k (F^{-1} ,G^{-1}) \}. $$
In the smooth topology on $M$, a sequence of diffeomorphisms in $\textup{Diff}^{\infty}(M)$
 is considered convergent if it converges in 
$\textup{Diff}^{k}(M)$ for all k.  The space $\textup{Diff}^{\infty}(M)$ is equipped with the metric
$$ d_{\infty}(F,G) = \sum_{k=1}^{\infty} 2^{-k} \frac{d_k (F,G)}{1 +d_k (F,G)}. $$
It is a compact metric space, and the Baire theorem holds for any of its closed subspaces.

\subsection{Premiliary Lemmas}
The following fact will be useful for our norm estimates of conjugation maps.
\begin{lemma}\label{le:2a}
Let $F,G \in\textup{Diff}^{\infty}(\mathbb{T}^2)$. For $k \in \mathbb{N},$  the norm estimates of the  composition $F \circ G$ satisfy
\begin{align}
    \vertiii{F\circ G}_k \leq C \vertiii{F}_k^k\cdot\vertiii{G}_k^k,
\end{align}
where $C$ is constant only depending on $k$.
\end{lemma}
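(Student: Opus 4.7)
The plan is to prove this by an iterated application of the chain rule in the Faà di Bruno style, together with a separate treatment of the inverse via $(F\circ G)^{-1} = G^{-1}\circ F^{-1}$.

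First I would fix a multi-index $a$ with $1\leq |a|\leq k$ and a coordinate $i\in\{1,2\}$, and expand $\mathrm{D}_a (F\circ G)_i = \mathrm{D}_a(F_i\circ G)$ via the multivariate Faà di Bruno formula. Schematically, one obtains a finite sum of the form
\begin{equation*}
\mathrm{D}_a(F_i\circ G) \;=\; \sum_{(b,\,\mathbf{c},\,\mathbf{j})} C_{a,b,\mathbf{c},\mathbf{j}}\,\bigl(\mathrm{D}_b F_i\bigr)(G)\;\prod_{\ell=1}^{|b|}\mathrm{D}_{c_\ell}G_{j_\ell},
\end{equation*}
where the sum runs over a finite set of admissible tuples $(b,\mathbf{c},\mathbf{j})$ with $1\leq |b|\leq |a|$, each $c_\ell$ a nonzero multi-index, each $j_\ell\in\{1,2\}$, and $|c_1|+\dots+|c_{|b|}|=|a|\leq k$. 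The number of such tuples and the combinatorial constants $C_{a,b,\mathbf{c},\mathbf{j}}$ depend only on $k$.

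Next I would estimate each term. The single outer factor $(\mathrm{D}_b F_i)(G)$ satisfies $\|(\mathrm{D}_b F_i)(G)\|_0\leq \|\mathrm{D}_b F_i\|_0\leq \vertiii{F}_k$, because the supremum is taken over all of $\mathbb{R}^2$ and $G$ maps into $\mathbb{R}^2$. Each inner factor $\mathrm{D}_{c_\ell}G_{j_\ell}$ is bounded by $\vertiii{G}_k$ since $|c_\ell|\leq k$, and there are at most $|b|\leq k$ such factors. Hence each term of the Faà di Bruno sum is bounded by a constant (depending only on $k$) times $\vertiii{F}_k\cdot\vertiii{G}_k^{k}$. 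Summing over the finitely many terms yields
\begin{equation*}
\bigl\|\mathrm{D}_a(F\circ G)_i\bigr\|_0 \;\leq\; C(k)\,\vertiii{F}_k\,\vertiii{G}_k^{k}\;\leq\; C(k)\,\vertiii{F}_k^{k}\,\vertiii{G}_k^{k},
\end{equation*}
where in the last step I use that $\vertiii{F}_k\geq 1$ (this follows from the definition of $\vertiii{\cdot}_k$ for a diffeomorphism of $\mathbb{T}^2$ homotopic to the identity, since the Jacobian determinant equals $1$ and hence some first-order partial derivative is at least $1$ in sup-norm). The case $|a|=0$ is handled directly from the $\mathbb{Z}^2$-periodicity of the deviation-from-identity.

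For the inverse estimate required by the definition of $\vertiii{\cdot}_k$, I would apply the identity $(F\circ G)^{-1} = G^{-1}\circ F^{-1}$ and rerun the same Faà di Bruno argument with the roles of $F,G$ replaced by $G^{-1},F^{-1}$. The resulting bound is again $C(k)\,\vertiii{G^{-1}}_k^{k}\,\vertiii{F^{-1}}_k^{k}\leq C(k)\,\vertiii{F}_k^{k}\,\vertiii{G}_k^{k}$, since by definition $\vertiii{F}_k$ and $\vertiii{G}_k$ already dominate the norms of the inverses. Taking the maximum over the two contributions and over $i,a$ gives the claimed inequality. The only genuine technicality, and what I would treat most carefully, is the combinatorial bookkeeping of multi-indices in the Faà di Bruno expansion; everything else is a direct size estimate.
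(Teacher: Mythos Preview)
Your approach via the multivariate Fa\`a di Bruno formula is exactly what the paper does (it simply invokes this formula and points to \cite[Lemma 4.1]{Ku15} for details). One small correction: your justification that $\vertiii{F}_k\geq 1$ via ``the Jacobian determinant equals $1$'' is not quite right, since the lemma does not assume volume preservation and even then a rotation matrix shows $\det=1$ does not force an entry of modulus $\geq 1$; instead observe that $\partial_{x_1}\widetilde F_1 = 1+\partial_{x_1}f_1$ with $f_1$ periodic, so $\partial_{x_1}f_1$ has zero mean and hence vanishes somewhere, giving $\|\partial_{x_1}\widetilde F_1\|_0\geq 1$.
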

\begin{proof}The above can be deduced using the Faa di Bruno formula; a similar proof is presented in \cite[Lemma 4.1]{Ku15}.
\end{proof}
In Section~\ref{sec:crit_weak_mixing} we will state a criterion for weak mixing expressed on suitable partial partitions.
\begin{definition}[Partial Partition] A collection $\zeta_n$ of disjoint sets on $\mathbb{T}^2$ is called a partial partition of $\mathbb{T}^2$. A sequence of partial partitions $\zeta_n$ converges to the decomposition into points (notation: $\zeta_n \rightarrow \varepsilon$) if, for any measurable set $A$, there exists a measurable set $A_n$ for any $n$, which is a union of elements of $\zeta_n$, such that $\lim_{n\rightarrow \infty} \mu(A \triangle A_n) = 0$ (here, $\triangle$ denotes the symmetric difference).
\end{definition}
The next lemma will allow us to verify the weak mixing property for a limit diffeomorphism from approximating maps.
 \begin{lemma}\label{lem:8:8.2}Let $\varepsilon>0$ and $f,g\in \textup{Diff}^{\infty}(\T^2,\mu)$ with $d_0(f,g)< \varepsilon$. Moreover, let $\eta$ be a partial partition of $\T^2$. Consider two subsets $A, B \subset \T^2$ such that $A\subset B$, and $\textit{dist}(\partial A, \partial B) > \varepsilon$, where $\text{dist}(A,B)=\inf_{x\in A,y \in B}\inf_{k\in \Z^2}\|x-y+k\|,$ where $\|\cdot\|$ denotes the Euclidean norm, and $\partial A $ denotes the boundary of $A$. Then the following relations hold:
        \begin{enumerate}
        \item If $f(x) \in A $, then $g(x)\in B,$ and thus $\mu(f(I)\cap A) \leq \mu(g(I)\cap B)$ for any $I\in \eta.$
      \item If $g(x) \in A$, then $f(x) \in B$, and thus $\mu(g(I) \cap A) \leq \mu(f(I) \cap B)$ for any $I\in \eta$.
      \end{enumerate}
      \end{lemma}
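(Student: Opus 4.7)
The plan is to reduce both statements to a single continuity/geometry observation and then convert the pointwise containment into a measure inequality using the fact that $f$ and $g$ preserve $\mu$.

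First I would unpack the hypothesis $d_0(f,g)<\varepsilon$: for every $x\in\T^2$, the torus-distance between $f(x)$ and $g(x)$ (the infimum over integer shifts of the Euclidean distance between the lifts) is strictly less than $\varepsilon$. Pick a straight segment $\gamma$ of length $<\varepsilon$ from $f(x)$ to $g(x)$ in a suitable fundamental domain. To prove (1), assume $f(x)\in A$ but $g(x)\notin B$. Since $A\subset B$, the segment $\gamma$ starts in $A$ and ends outside $B$; by connectedness it meets $\partial A$ at some point $p$ and $\partial B$ at some point $q$, with $p$ preceding $q$ along $\gamma$. Then
\[
\dist(\partial A,\partial B)\;\leq\;\|p-q\|\;\leq\;\operatorname{length}(\gamma)\;<\;\varepsilon,
\]
contradicting the assumption $\dist(\partial A,\partial B)>\varepsilon$. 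Hence $g(x)\in B$. Statement (2) is symmetric in the roles of $f$ and $g$, so the same argument (with $f$ and $g$ swapped) gives $f(x)\in B$ whenever $g(x)\in A$.

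Next, I would promote the pointwise implication to a measure inequality. The implication ``$f(x)\in A\Rightarrow g(x)\in B$'' translates to the set inclusion $f^{-1}(A)\subset g^{-1}(B)$. For any $I\in\eta$, since $f$ and $g$ are diffeomorphisms,
\[
f(I)\cap A \;=\; f\bigl(I\cap f^{-1}(A)\bigr),\qquad g(I)\cap B \;=\; g\bigl(I\cap g^{-1}(B)\bigr),
\]
and because $f,g$ are $\mu$-preserving,
\[
\mu(f(I)\cap A) \;=\; \mu(I\cap f^{-1}(A)) \;\leq\; \mu(I\cap g^{-1}(B)) \;=\; \mu(g(I)\cap B).
\]
The analogous computation, using $g^{-1}(A)\subset f^{-1}(B)$, yields the inequality in case (2).

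The only delicate point is the first step, namely making sure that the ``segment from $f(x)$ to $g(x)$'' is well-defined on $\T^2$ and that its length is exactly the distance measured by $d_0$. This is the reason the metric $d_0$ is defined with the $\inf_{p\in\Z}$: one chooses the integer translate of the lift of $g$ that realizes the infimum, works in $\R^2$ with the straight segment there, and then projects back to $\T^2$. Apart from this bookkeeping, the argument is essentially the elementary fact that a curve of length $<\varepsilon$ cannot cross from $\partial A$ to $\partial B$ when these boundaries are separated by more than $\varepsilon$.
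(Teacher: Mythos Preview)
Your proof is correct and is exactly the direct argument the paper has in mind; the paper itself does not spell out any details beyond ``derived directly from the definitions,'' and your segment/boundary-crossing argument together with the measure-preservation step is the natural way to do this. One tiny bookkeeping remark: the paper's $d_0$ is a coordinatewise $\ell^\infty$ distance while $\dist(\partial A,\partial B)$ is Euclidean, so strictly speaking the segment has length $<\sqrt{2}\,\varepsilon$ rather than $<\varepsilon$; in every application in the paper there is ample slack, so this does not affect anything, but it is worth being aware of.
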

\begin{proof} The proof can be derived directly from the definitions.
\end{proof}
\subsection{Approximation by conjugation method}
Here, we outline the scheme of the approximation by conjugation method introduced in \cite{AK}, which allows for the construction of smooth area-preserving diffeomorphisms with specific ergodic properties. We present the following method specifically for the case of $\mathbb{T}^2$, but it is applicable to any compact connected smooth manifold $M$ with a non-trivial circle action. Let us denote by $R_t$ the measure-preserving smooth circle action $\mathbb{T}^1$ on the torus $\mathbb{T}^2= \mathbb{R}/\mathbb{Z}\times \mathbb{R}/\mathbb{Z}$ defined by translation $t$ in the first coordinate: $R_{t}(\theta,r)= (\theta+t,r).$
The required map $f$ is constructed as the limit of a sequence of periodic measure preserving diffeomorphism $f_n$ in the smooth topology. The sequence of $f_n$ is defined by induction as
     \begin{align}\label{eq:1d}
     f_n = H_n\circ R_{\alpha_{n+1}}\circ H_n^{-1},
     \end{align}
     where $\alpha_{n+1}= \frac{p_{n+1}}{q_{n+1}}\in \mathbb{Q}/\mathbb{Z}$ and $H_n\in \textup{Diff}^{\infty}(\mathbb{T}^2,\mu)$.
   The diffeomorphism $H_n$ 
    is constructed successively as $H_n = h_1\circ \ldots \circ h_n,$ where $h_n$ is an area preserving diffeomorphism of $\mathbb{T}^2$ that satisfies 
    \begin{align}\label{eq:2d}
    h_n\circ R_{\alpha_{n}}= R_{\alpha_{n}}\circ h_n.
    \end{align}
    The rationals $\alpha_{n+1}$ are chosen close enough to $\alpha_{n}$ to ensure closeness between $f_n$ and $f_{n-1}$ in the $C^{\infty}$ or $C^{\omega}$ topology. Given $\alpha_{n+1}, H_n$, at the $n+1$ stage of this iterative process, we construct $h_{n+1}$ such that $f_{n+1}$ satisfy an approximative finitary version of the specific property we eventually need to achieve for the limiting diffeomorphism. The explicit construction of $h_{n+1}$ for our purpose is exhibited in Section~\ref{sec:constr}.
    Then we construct $\alpha_{n+2}$ close enough to $\alpha_{n+1}$ by choosing some parameters $k_{n+1}\in \mathbb{N}$ and $l_{n+1}\in \mathbb{N}$ sufficiently large such that they satisfy certain conditions \ref{item:P1}-\ref{item:P2}
    and guarantee the convergence of iterative sequence $(f_n)_{n\in \N}$ in the smooth or real analytic topologies, respectively. 
The limit obtained from this iterative sequence is the required smooth or real analytic diffeomorphism with the specific ergodic properties.

Additionaly, we employ the quantitative version of this method to construct the diffeomorphism with a prescribed Liouvillean rotation number $\alpha$. This involves ensuring that the sequence $\alpha_n$ converges to $\alpha$ and the limiting transformation $f\in \A_{\alpha}(\T^2)$, as discussed in \cite{FS}.
\subsection{Outline of the proofs} \label{subsec:outline}
Similar to the constructions in \cite{GKu15}, we introduce two sequences of partial partitions, $\eta_n$ and $\zeta_n$, of the torus, both converging to decompositions into points. The partition $\eta_n$ will satisfy the requirements to demonstrate the weak mixing property, and the other partition $\zeta_n$ consists of even more refined partition elements. The conjugation map $h_n$ acts as an isometry on the partition elements of $\zeta_n$, allowing the construction of an invariant measurable Riemannian metric. Subsequently, the conjugation diffeomorphism $h_n = g_n \circ \phi_n$ is introduced. Compared to \cite{GKu15} and \cite{Ku20}, modifications are made to $g_n$ and $\phi_n$ to establish the weak-mixing property of the projectivized derivative extension. Specifically, $g_n$ introduces shear in the $\theta$ direction, while still acting as an isometry in the $\phi_n$-image of any partition element $I \in \zeta_n$. Meanwhile, $\phi_n$ acts as an isometry on the elements $I \in \zeta_n$ and simultaneously arranges the elements of $\eta_n$ to meet weak mixing criteria. The map $\phi_n$ is defined as $i_n \circ \tilde{\phi}_n$ in half of the fundamental domain of the torus and as an identity on the other half. Here, $\tilde{\phi}_n$ acts as an isometry and distributes the $\eta_n$ partition elements as required by weak mixing criteria on the manifold. The map $i_n$ is a composition of translation and different rotations in different sections of the domains in $\zeta_n$ to achieve good distribution behavior in the tangent space, as described in Section \ref{sec:constr}. 

Compared to \cite{GKu15}, we introduce a weak mixing criterion for the projectivized derivative extension, with a sequence of partial partitions $\hat{\eta}_n$ for the space $\mathbb{P} \mathrm{T}M$ converging to point decomposition. This criterion is based on the notion of $(\gamma,\delta,\varepsilon_1, \varepsilon_2)$ distribution of the map $(\Phi_n,\mathrm{d} \Phi_n) = (\phi_n,\mathrm{d}\phi_n)\circ (R_{\alpha_{n+1}},\mathrm{d}R_{\alpha_{n+1}})^{m_n}\circ (\phi_n,\mathrm{d}\phi_n)^{-1}$ with a specific choice of sequence $(m_n)_n\in \mathbb{N}$. This criterion is based on \cite{GKu15}, but adapted for the derivative extension $(\Phi_n,\mathrm{d} \Phi_n)$ in Section \ref{sec:crit_weak_mixing}.

Furthermore, in the context of analytic constructions, we can closely approximate an analytic diffeomorphism to a smooth diffeomorphism by employing the approximation technique outlined in Sections \ref{sec:4:4.a}. In contrast to \cite{Ku17}, this approximation technique is specific to Hamiltonian diffeomorphisms and enables the decomposition of the smooth Hamiltonian diffeomorphism into a finite composition of one-dimensional linear flows, with each flow being approximated by analytic ones. These approaches ensure a precise level of approximation.
Hereby, we get the conjugation map $\hat{h}_n = \hat{g}_n \circ \hat{\phi}_n$, where $\hat{g}_n$ and $\hat{\phi}_n$ denote the analytic diffeomorphisms obtained by applying approximations to $g_n$ and $\phi_n$, respectively. This map $\hat{h}_n$ acts as an ``almost isometry" on the elements $I\in \zeta_n$, thereby establishing the existence of an invariant measurable Riemannian metric on $\mathbb{P} \mathrm{T}M$, as discussed in Section \ref{sec:metric}, following a similar strategy as in \cite{Ku17}. Subsequently, modifications are made to adapt the weak mixing criterion for the analytic projectivized derivative extension, as executed in Section \ref{sec:crit_weak_mixing}.

Finally, convergence of the approximation by conjugation scheme and the application of the weak mixing criterion are carried out in Sections  \ref{sec:conv} and \ref{sec:app_crit}, for both smooth and analytic constructions.

\section{Approximation by Conjugation schemes} \label{sec:constr}
\subsection{Smooth AbC scheme}\label{sec:3.3.1}
In our approximation by conjugation scheme in the smooth category we will inductively construct maps
\begin{equation}
    f_n=H_n\circ R_{\a_{n+1}}\circ H_n^{-1}, \ \text{where} \ H_n=h_1\circ\ldots \circ h_n \label{eq:3a}
\end{equation} 
with conjugation maps of the form
\begin{equation}
    h_n= g_n \circ \phi_n, \label{eq:3:3b}
\end{equation}
where the area-preserving diffeomorphisms $g_n$ and $\phi_n$ commute with $R_{\alpha_n}$ and will be explicitly constructed in Sections \ref{sec:conj_g_n} and \ref{sec:phi}, respectively. We will choose the sequence of rationals $\alpha_n = \frac{p_n}{q_n}$ converging to a prescribed Liouville number $\alpha$ in such a way that $\alpha_{n+1}$ is sufficiently close to $\alpha_n$ in order to guarantee convergence of the sequence $(f_n)_{n\in \N}$ to a limit diffeomorphism. In our explicit inductive construction process at step $n$, we have $H_{n-1}\in \textup{Diff}^{\infty}(\T^2,\mu)$ and $\alpha_1, \alpha_2,\ldots, \alpha_n\in \T^1$. Moreover, we are given some parameters $k_n\in \N$ and $l_n\in \N$ such that the following properties hold:
\begin{enumerate}[label={\bf(P\arabic*)}]
    \item\label{item:P1}
    The parameter $k_n\geq n^5$ satisfies that for every subset $ \hat{S} \subset \mathbb{P}\mathrm{T}M$ with $\text{diam}(\hat{S}) < \frac{3}{k_n}$, we have
     \begin{align}  \label{eqn:3.1.1a}
     \text{diam}\left((H_{n-1}, \mathrm{d}H_{n-1})(\hat{S})\right) \leq \frac{1}{n^2}.
    \end{align}
   \item\label{item:P0} The sequence $(k_{n})_{n \in \N}$ increases rapidly enough to guarantee, $
 \sum_{u=n}^{\infty} \frac{1}{k_{u}^5} \leq  \frac{1}{4k_n^4}$ for every $n \in \N. $
   \item\label{item:P3}  The parameter  $q_n$  is chosen to be large enough  such that $q_n^{0.25}>2 k_n$. Note that this is possible since the parameter $k_n$ is independent of $q_n$ and depends only on the conjugation map $H_{n-1}$.
    \item\label{item:P2} Growth conditions on $ l_n \geq 2 \cdot k_n^{10}\cdot q_n^2\cdot \|\mathrm{d} H_{n-1}\|_0,$ and $q_{n+1} > 2 k_n^{12} q_n^2.$ 
\end{enumerate}

\subsubsection{The conjugation map \texorpdfstring{$g_n$}{Lg}} \label{sec:conj_g_n} 
We introduce the smooth map $g_n$ as an approximation by suitably chosen step functions to the shear function $\tilde{g}_n(\theta,r)= (\theta+ br,r),$ where $b=[nq_n^{\sigma}]$ and $0.25 < \sigma < 0.5$. The purpose of the map $g_n$ is to introduce the appropriate kind of shear in the $\theta$ direction to achieve the weak mixing property on $M$. Additionally, the map $g_n$ must act as an isometry on the image $\phi_n(I_n)$ of all partition elements $I_n\in \zeta_n$, where $\zeta_n$ is a specific partial partition of $M$ defined in Section~\ref{subsubsec:zeta} below. This property will allow us to show that $f$ admits an invariant measurable Riemannian metric on the space, inducing an invariant measure on $\mathbb{P} \mathrm{T}M$ in our setup. This approach follows~\cite{GKu15}.
\begin{proposition}\label{lemma:3a} 
 Let $a_n={k_n^5}, \varepsilon_n = \frac{1}{2n^5k_n^{10}},$ and $b_n = [nq_n^{\sigma}]$ for $0.25<\sigma<0.5.$ Then there exists $g_n\in \text{Ham}^{\infty}(\T^2,\mu)$ such that
\begin{enumerate}
    \item $g_n\circ R_{\frac{1}{q_n}} = R_{\frac{1}{q_n}}\circ g_n$.
    \item For any $ j\in \{0,\ldots, a_n-1\},$ $g_n$ acts on $\T^1 \times [\frac{j+2\varepsilon_n}{a_n}, \frac{j+1- 2\varepsilon_n}{a_n}]$ as translation in the $\theta$-direction by $b_n\cdot\frac{j}{a_n}$.  
    \item $\vertiii{g_n}_r\leq c_{n,k_n,r}\cdot q_n^r,$ where the constant $c_{n,k_n,r}$ is independent of $q_n.$
\end{enumerate}
\end{proposition}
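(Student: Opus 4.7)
The plan is to realize $g_n$ as a smooth $r$-shear $g_n(\theta, r) = (\theta + \psi_n(r), r)$, where $\psi_n : \mathbb{R} \to \mathbb{R}$ is a smooth $\mathbb{Z}$-periodic staircase function. Fix once and for all a smooth monotone bump $\rho : \mathbb{R} \to [0,1]$ with $\rho \equiv 0$ on $(-\infty, 0]$ and $\rho \equiv 1$ on $[1, \infty)$. On each plateau $[\tfrac{j+2\varepsilon_n}{a_n}, \tfrac{j+1-2\varepsilon_n}{a_n}]$, declare $\psi_n$ to equal $b_n j / a_n + m_j$ for a suitable integer offset $m_j \in \mathbb{Z}$, and transition between consecutive plateau values via $\rho$ rescaled to the buffer window of length $4\varepsilon_n / a_n$; the offsets $m_j$ are chosen (with possibly a small smooth correction on one plateau) so that $\int_0^1 \psi_n(s)\, ds = 0$. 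This bookkeeping is compatible with conclusion~(2) because an integer shift on a plateau leaves the induced $\mathbb{T}^1$-translation $b_n j / a_n \bmod 1$ unchanged.

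With this definition, conclusions (1) and (2) are immediate: the $\theta$-shift $\psi_n(r)$ depends only on $r$, so $g_n$ commutes with every $\theta$-translation and in particular with $R_{1/q_n}$, while (2) is built into the construction of $\psi_n$. Area preservation is clear from $\det \mathrm{D} g_n = 1$, and the normalization $\int_0^1 \psi_n = 0$ ensures that $H(\theta, r) := \int_0^r \psi_n(s)\, ds$ descends to a smooth function on $\mathbb{T}^2$ whose Hamiltonian vector field $X_H = \psi_n(r)\, \partial_\theta$ integrates to $g_n$ at time $1$; hence $g_n \in \text{Ham}^\infty(\mathbb{T}^2, \mu)$.

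For the norm estimate (3), the only nonconstant entry of $\mathrm{D} g_n$ is $\partial_r$ applied to the first coordinate, equal to $\psi_n'(r)$, and similarly for $g_n^{-1}(\theta, r) = (\theta - \psi_n(r), r)$. On each buffer window $\psi_n$ interpolates between plateau values differing by $b_n / a_n$ across a window of width $4\varepsilon_n/a_n$, so the scaled bump yields
\[
\|\psi_n^{(s)}\|_0 \;\leq\; C_s \cdot \frac{b_n/a_n}{(4\varepsilon_n/a_n)^s} \;=\; C_s \cdot b_n \cdot \frac{a_n^{s-1}}{\varepsilon_n^s}.
\]
Substituting $a_n = k_n^5$, $\varepsilon_n = 1/(2 n^5 k_n^{10})$ and $b_n = [n q_n^\sigma]$ with $\sigma < 1/2$, this is bounded by $c_{n, k_n, s}\, q_n^\sigma \leq c_{n, k_n, s}\, q_n^s$ for $s \geq 1$, and hence $\vertiii{g_n}_r \leq c_{n, k_n, r}\, q_n^r$ as claimed. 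The main delicate step in the whole argument is the integer-offset bookkeeping that secures Hamiltonianity on $\mathbb{T}^2$ rather than mere area preservation; everything else is an explicit construction and a direct derivative estimate.
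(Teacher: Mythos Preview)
Your approach is the same as the paper's --- an explicit horizontal shear $g_n(\theta,r)=(\theta+\psi_n(r),r)$ built from a rescaled monotone bump $\rho$, with the same derivative count $\|\psi_n^{(s)}\|_0\le C_s\,(b_n/a_n)(a_n/\varepsilon_n)^s$. You are in fact more careful than the paper about Hamiltonianity: the paper simply declares the antiderivative $H(\theta,r)=\int_0^r\tilde\psi$ to lie in $C^\infty(\mathbb T^2,\mathbb R)$, which requires both that $\tilde\psi$ extend $\mathbb Z$-periodically and that $\int_0^1\tilde\psi=0$, and it checks neither. Your integer offsets $m_j$ are the right device to force $\mathbb Z$-periodicity of $\psi_n$ without disturbing the induced $\mathbb T^1$-translations.

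There is one slip. The ``small smooth correction on one plateau'' used to force $\int_0^1\psi_n=0$ would violate conclusion~(2): on every interval $[\tfrac{j+2\varepsilon_n}{a_n},\tfrac{j+1-2\varepsilon_n}{a_n}]$ the map must be the \emph{exact} translation by $b_nj/a_n$, so $\psi_n$ must be constant and congruent to $b_nj/a_n$ modulo~$1$ there. Place the correction instead in one (or several) of the buffer windows of length $4\varepsilon_n/a_n$, e.g.\ by using an asymmetric transition profile on a single buffer or by adding a compactly supported bump there. A bump on one buffer with total integral $-\int_0^1\psi_n$ contributes at most $C_s\,b_n\,(a_n/\varepsilon_n)^{s+1}$ to $\|\psi_n^{(s)}\|_0$; since $a_n,\varepsilon_n$ depend only on $n,k_n$ and $b_n=[nq_n^\sigma]$ with $\sigma<1$, this is still of the form $c_{n,k_n,s}\,q_n^\sigma\le c_{n,k_n,s}\,q_n^s$, so conclusion~(3) survives.
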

\begin{proof}
Let $a,b\in \mathbb{N}$ and $\varepsilon>0$ satisfying $\frac{1}{2\varepsilon}\in \mathbb{N}.$ Let $\rho: \mathbb{R}\rightarrow \mathbb{R}$ be a smooth increasing function that is equal to $0$ for $x\leq -1$ and takes the value $1$ for $x\geq 1$. Consider the map $\tilde{\psi}_{a,b,\varepsilon}:[0,1]\rightarrow \mathbb{R}$ as 
\begin{align}
   \tilde{\psi}_{a,b,\varepsilon}(x) = \frac{b}{a}\cdot \sum_{i=1}^{a-1}\rho\left(\frac{ax-i}{2\varepsilon}\right).
\end{align}
Observe that for any $0\leq j \leq a-1$, we have $\tilde{\psi}_{a,b,\varepsilon}|_{[\frac{j+2\varepsilon}{a}, \frac{j+1- 2\varepsilon}{a}]}  = b\cdot\frac{j}{a} \mod 1.$ Additionally, we can estimate $\|D^r\tilde{\psi}_{a,b,\varepsilon}\|_0\leq \frac{b\cdot a^{r-1}}{\varepsilon^r}\cdot \|D^r\rho\|_0.$ \\
We consider an antiderivative map  $\varrho_{a,b,\varepsilon}: \mathbb{T} \rightarrow \mathbb{R}$  where $\frac{\mathrm{d}}{\mathrm{d}r}\varrho_{a,b,\varepsilon}(r) = \tilde{\psi}_{a,b,\varepsilon}(r)$. Furthermore, we can introduce a symplectic vector field $$X_{H} = (\partial_{r}(H), -\partial_{\theta}(H)) = (\tilde{\psi}_{a,b,\varepsilon}, 0)$$
defined by the Hamiltonian $H \in C^{\infty}(\mathbb{T}^2, \mathbb{R})$ given by $H(\theta, r) = \varrho_{a,b,\varepsilon}(r)$. Finally, we obtain a Hamiltonian diffeomorphism $g_{a,b,\varepsilon}: \mathbb{T}^2 \rightarrow \mathbb{T}^2$  given by
\begin{align}
    g_{a,b,\varepsilon}(\theta, r) = 
    (\theta + \tilde{\psi}_{a,b,\varepsilon}(r)\text{ mod } 1, r).
\end{align}
as the time-1 map of the Hamiltonian $H(\theta, r) = \varrho_{a,b,\varepsilon}(r)$.

In our explicit construction, we will use $g_n = g_{a_n,b_n,\varepsilon_n},$ where $a_n= k_n^{5}$, $\varepsilon_n = \frac{1}{2n^5k_n^{10}}$ and $b_n = [nq_n^{\sigma}]$ for $0.25< \sigma < 0.5.$
 Observe that $g_n\circ R_{\frac{1}{q_n}}= R_{\frac{1}{q_n}} \circ g_n $  and  
$$\vertiii{g_n}_r \leq \frac{b_n\cdot a_n^{r-1}}{\varepsilon_n^r} \cdot \|D^r\rho\|_0 \leq 2[nq_n^{\sigma}]\cdot k_n^{3(r-1)+2r}\cdot q_n^{r-1}\cdot \|D^r\rho\|_0 \leq c_{n,k_n,r}\cdot q_n^r,$$
where $c_{n,k_n,r}$ is a constant that depends only on $n,k_n$ but 
is independent of $q_n.$
\end{proof} 
 We define the ``good domain" of $g_n$ as 
\begin{align}\label{eqn:gooddomain_g_n}
\mathcal{G}_n = \bigcup_{j\in \{0,\ldots, a_n-1\}} \T^1 \times \left[\frac{j+2\varepsilon_n}{a_n}, \frac{j+1- 2\varepsilon_n}{a_n}\right].
\end{align}
By the second part of Proposition~\ref{lemma:3a} the map $g_n$ acts as a translation on $\mathcal{G}_n$ and, hence, its differential map $\mathrm{d}_pg_n= \text{id}$ for any base point $p \in \mathcal{G}_n$.

\subsection{Analytic AbC scheme} \label{sec:4:4.a}
On the torus $\T^2$ we execute an approximation by conjugation method in the analytic category, where at step $n$ in the inductive construction process we consider
\begin{equation} 
    \hat{f}_n=\hat{H}_n\circ R_{\a_{n+1}}\circ \hat{H}_n^{-1}, \ \text{where } \ \hat{H}_n=\hat{h}_1\circ\ldots \circ \hat{h}_n, \label{eqn:4:4.1a}
    \end{equation}
    with conjugation maps
    \begin{equation}
    \hat{h}_n= \hat{g}_n \circ \hat{\phi}_n.\label{eqn:4:4.1b}
\end{equation}
This time, we choose the sequence $\alpha_{n+1} = \frac{p_{n+1}}{q_{n+1}}= \alpha_n +\frac{1}{k_n\cdot l_n \cdot q_n^2}$ with parameters $k_n, l_n$ satisfying the conditions $\ref{item:P1}- \ref{item:P2}$ as defined in Section \ref{sec:3.3.1}. 

Our conjugation maps $\hat{\phi}_n$ and $\hat{g}_n$ are real analytic diffeomorphisms that commute with $R_{\alpha_n}$ and admit entire complexifications of their lifts of the torus. These maps closely resemble the combinatorial behavior of the conjugation maps $\phi_n$ and $g_n$ in the smooth case. To establish the existence of such analytic conjugation maps $\hat{\phi}_n$ and $\hat{g}_n$, we can use the following approximation result in Theorem \ref{thm:4:4.1}.

\begin{theorem}[Analytic Approximation Result]\label{thm:4:4.1} 
For any $\epsilon> 0$ and $h\in\text{Ham}^{\infty}(\T^2, \mu),$ there exists a real analytic map $\tilde{h} \in \text{Ham}_{\infty}^{\omega}(\T^2, \mu)$ such that $d_r(h, \tilde{h}\restriction_{\T^2}) < \epsilon$ for all $r\in \N$,  and the map $\tilde{h}^{-1}\in \text{Ham}_{\infty}^{\omega}(\T^2, \mu).$ 
\end{theorem}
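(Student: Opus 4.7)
The plan is to adapt the Hamiltonian splitting approximation of Berger \cite[Theorem 1.8]{Be22} to the $2$-torus. Since $h\in\text{Ham}^{\infty}(\T^2,\mu)$, I would first choose a smooth time-dependent Hamiltonian $H_t:\T^2\to\R$ such that $h=\psi_1^H$. Discretizing time into $N$ equal subintervals of length $\tau=1/N$, I would approximate $h$ by the composition $\psi_\tau^{H_{t_{N-1}}}\circ\cdots\circ\psi_\tau^{H_{t_0}}$ of time-$\tau$ flows of autonomous Hamiltonians; standard ODE stability via Gronwall's inequality shows this composition converges to $h$ in any prescribed $C^r$ norm as $N\to\infty$.

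For each autonomous piece $K=H_{t_k}$, the next step is to replace its time-$\tau$ flow by a composition of explicit entire maps. First replace $K$ by a trigonometric polynomial $K^M$ of sufficiently high Fourier degree; smoothness of $K$ ensures that $\|K-K^M\|_{C^r}$ decays faster than any polynomial in $M^{-1}$. Next, apply a Lie--Trotter splitting of $K^M$ into a finite sum of single-variable Hamiltonians: each Fourier mode $e^{2\pi i(m\theta+nr)}$ can, after a linear area-preserving conjugation on the torus, be rewritten as a univariate Hamiltonian, so one obtains $\psi_\tau^{K^M}\approx \psi_\tau^{F_1}\circ\cdots\circ\psi_\tau^{F_p}$ where each $F_j$ depends on only one variable and the splitting error is of order $O(\tau^2)$ per step. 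Each time-$\tau$ flow of a univariate trigonometric polynomial Hamiltonian is an explicit shear, for instance $(\theta,r)\mapsto(\theta,r-\tau F_j'(\theta))$, whose lift extends to an entire holomorphic diffeomorphism of $\C^2$, preserves $\mu$, and has the explicit entire inverse $(\theta,r)\mapsto(\theta,r+\tau F_j'(\theta))$.

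Composing all of these entire shears and linear conjugators yields $\tilde h\in\text{Ham}_\infty^\omega(\T^2,\mu)$ whose inverse also lies in $\text{Ham}_\infty^\omega(\T^2,\mu)$, since it is obtained by reversing the order of composition and negating each shear. The total error in $d_r$ is the sum of the time-discretization error, the Lie--Trotter commutator error, and the trigonometric truncation error. Combined with Lemma \ref{le:2a} to control how $C^r$ norms behave under composition, these estimates allow one to achieve $d_r(h,\tilde h\restriction_{\T^2})<\epsilon$ by choosing $N$ and $M$ large enough.

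The main obstacle is controlling the $C^r$ norms of the lengthy composition: each additional shear amplifies derivatives according to the Faa di Bruno formula, so both $N$ and the Fourier cutoff $M$ must be calibrated against this growth. A second subtlety is the treatment of non-separable Fourier modes $e^{2\pi i(m\theta+nr)}$ with $mn\neq 0$, whose decomposition into univariate pieces is exactly where the explicit generator scheme of \cite{Be22} is needed to keep the approximation simultaneously entire and quantitatively controllable.
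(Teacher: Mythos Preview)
Your proposal is correct and follows exactly the approach the paper intends: the paper's own ``proof'' consists of the single sentence that the argument is almost identical to \cite[Theorem~1.8]{Be22}, and your outline is precisely Berger's scheme (time discretization, trigonometric truncation, Lie--Trotter splitting into univariate shears, entire inverses by reversing the composition) transported to $\T^2$. You have in fact supplied more detail than the paper does.
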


We skip the proof of the above as it is almost identical to the proof for the annulus in \cite[Theorem 1.8]{Be22}.

\begin{corollary}\label{coro:4:4.1}
Let $\epsilon> 0 $ and $h\in \text{Ham}^{\infty}(\T^2, \mu)$ be $(\frac{1}{q},0)$-periodic. Then there exists $(\frac{1}{q},0)$-periodic $\hat{h}\in \text{Ham}_{\infty}^{\omega}(\T^2, \mu)$ such that $\hat{h}^{-1}\in \text{Ham}_{\infty}^{\omega}(\T^2, \mu)$  and the following properties hold: 
\begin{enumerate}
    \item $d_{r}(h, \hat{h}|_{\T^2}) < \epsilon $ for $r=1,2$; 
    \item $\sup_{x\in \T^2} d((h,\mathrm{d} h )(x), (\hat{h}\restriction_{\T^2},\mathrm{d} \hat{h})(x)) < \epsilon.$
\end{enumerate}  
\end{corollary}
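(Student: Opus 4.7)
The plan is to deduce the corollary from Theorem \ref{thm:4:4.1} by combining an application of the theorem with a symmetrization procedure that restores the $(1/q,0)$-periodicity. First, I would apply Theorem \ref{thm:4:4.1} to $h$ to obtain an analytic $\tilde{h}\in\text{Ham}_{\infty}^{\omega}(\T^2,\mu)$, with $\tilde{h}^{-1}\in\text{Ham}_{\infty}^{\omega}(\T^2,\mu)$, satisfying $d_r(h,\tilde{h}\restriction_{\T^2})<\epsilon'$ for $r\in\{1,2,3\}$ and some small $\epsilon'>0$ to be fixed at the end. Generically $\tilde{h}$ need not commute with $R_{1/q}$, so a further step is required.

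Second, to enforce the periodicity I would exploit the Hamiltonian structure. Since $h$ commutes with $R_{1/q}$, a standard argument using the essentially unique generating Hamiltonian of a Hamiltonian isotopy shows that $h$ admits a smooth time-dependent Hamiltonian $H_t:\T^2\to\R$ that is itself $(1/q,0)$-invariant, i.e.\ $H_t\circ R_{1/q}=H_t$. The construction behind Theorem \ref{thm:4:4.1} (namely the Berger-style decomposition into one-dimensional Hamiltonian flows, each approximated by an entire one) provides an entire Hamiltonian $\tilde{H}_t$ whose time-1 map is $\tilde{h}$. I would then average:
\[
\hat{H}_t(x_1,x_2)\;=\;\frac{1}{q}\sum_{j=0}^{q-1}\tilde{H}_t\bigl(x_1+\tfrac{j}{q},x_2\bigr).
\]
This averaged Hamiltonian is entire and $(1/q,0)$-invariant, so its time-1 map $\hat{h}$ lies in $\text{Ham}_{\infty}^{\omega}(\T^2,\mu)$ and commutes with $R_{1/q}$; the inverse is automatically in the same class. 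Because $H_t\circ R_{j/q}=H_t$, each term $\tilde{H}_t\circ R_{j/q}$ is close to $H_t$ in the smooth norms used in Theorem \ref{thm:4:4.1}, and hence so is the average $\hat{H}_t$.

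Third, I would verify the two quantitative conclusions. For (1), the standard Gronwall estimate for Hamiltonian flows propagates $C^{r+1}$-closeness of Hamiltonians to $C^{r}$-closeness of their time-1 maps, so choosing $\epsilon'$ small enough (depending on $\|H_t\|_{C^3}$ and $q$) gives $d_r(h,\hat{h}\restriction_{\T^2})<\epsilon$ for $r=1,2$. For (2), observe that on $\mathbb{P}T\T^2$ the distance $d((h,\mathrm{d}h)(x),(\hat{h},\mathrm{d}\hat{h})(x))$ is controlled by a combination of the pointwise base distance $\|h(x)-\hat{h}(x)\|$ and the operator-norm distance $\|\mathrm{d}h(x)-\mathrm{d}\hat{h}(x)\|$, since projectivization is Lipschitz on the (compact) set of nondegenerate $2\times2$ matrices with determinant one. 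Both quantities are bounded by $d_1(h,\hat{h}\restriction_{\T^2})$, so shrinking $\epsilon'$ further secures (2).

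The main obstacle is the symmetrization step: one must ensure that the averaged Hamiltonian truly generates an entire flow and that the closeness estimates of Theorem \ref{thm:4:4.1} transfer through the averaging without loss. A cleaner alternative worth noting is to view $h$ as a Hamiltonian diffeomorphism on the quotient torus $\T^2_q:=(\R/\tfrac{1}{q}\Z)\times(\R/\Z)$, apply a rescaled version of Theorem \ref{thm:4:4.1} directly on $\T^2_q$, and then lift back by periodicity; this avoids averaging at the Hamiltonian level but requires explicitly checking that the rescaling and lifting preserve entirety and the required closeness bounds.
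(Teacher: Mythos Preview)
Your primary approach via Hamiltonian averaging has a genuine gap at the step where you conclude that the time-1 map $\hat{h}$ of the averaged Hamiltonian $\hat{H}_t$ lies in $\text{Ham}_{\infty}^{\omega}(\T^2,\mu)$. An entire time-dependent Hamiltonian does not in general generate a flow whose time-1 map extends to an entire symplectomorphism with entire inverse; this is precisely the difficulty that makes analytic Anosov--Katok constructions delicate. In Berger's scheme behind Theorem~\ref{thm:4:4.1}, entirety of the resulting diffeomorphism and its inverse comes from a very specific structure: the map is built as a finite composition of time-1 maps of \emph{one-dimensional} Hamiltonian flows of the form $(x_1,x_2)\mapsto(x_1+\psi(x_2),x_2)$ or $(x_1,x_2)\mapsto(x_1,x_2+\psi(x_1))$, each of which is explicitly entire together with its inverse. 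Your averaging $\hat{H}_t=\frac{1}{q}\sum_j\tilde{H}_t\circ R_{j/q}$ destroys this decomposition, so you have no mechanism left to guarantee that the flow of $\hat{H}_t$ remains entire (or even analytic on a fixed strip). The Gronwall argument you invoke gives $C^r$-closeness of the real flows, but says nothing about complex extension.

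Your ``cleaner alternative'' is in fact the correct route and is exactly what the paper does. Rather than working on the quotient $\T^2_q$, the paper rescales the first coordinate: since $h$ is $(1/q,0)$-periodic there is a $\bar{h}\in\text{Ham}^{\infty}(\T^2,\mu)$ with $h(x+\tfrac{a}{q},y)=(\tfrac{1}{q}\bar{h}_1(qx,y)+\tfrac{a}{q},\,\bar{h}_2(qx,y))$. One applies Theorem~\ref{thm:4:4.1} to $\bar{h}$ with precision $\epsilon/q^2$, obtaining an entire $\tilde{h}$, and then defines $\hat{h}$ by the same rescaling formula with $\tilde{h}$ in place of $\bar{h}$. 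Periodicity and entirety are then automatic from the construction, and the factor $q^2$ in the precision absorbs the derivative loss from the chain rule under rescaling, giving both conclusions (1) and (2) directly. You already identified that the only checks needed are that rescaling preserves entirety (it does: composition with linear maps) and the closeness bounds (handled by the $\epsilon/q^2$).
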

\begin{proof}[Proof of Corollary \ref{coro:4:4.1}]
Since $h \in \text{Ham}^{\infty}(\T^2, \mu)$ is $(\frac{1}{q},0)$-periodic, there exists a $\Z^2$-periodic map $\bar{h}=(\bar{h}_1,\bar{h}_2) \in \text{Ham}^{\infty}(\T^2, \mu)$ such that 
$$h\left(x+\frac{a}{q},y\right)=\left(\frac{1}{q}\bar{h}_1(qx,y)+\frac{a}{q},\bar{h}_2(qx,y)\right), \ \text{ where } x\in \left[0,\frac{1}{q}\right).$$ 
We apply Theorem~\ref{thm:4:4.1} to get $\tilde{h}=(\tilde{h}_1,\tilde{h}_2)\in \text{Ham}_{\infty}^{\omega}(\T^2, \mu)$ approximating $\bar{h}$ with precision $\epsilon/q^2$. Then we introduce the map $\hat{h}\in \text{Ham}_{\infty}^{\omega}(\T^2, \mu)$ as
$$\hat{h}\left(x+\frac{a}{q},y\right)=\left(\frac{1}{q}\tilde{h}_1(qx,y)+\frac{a}{q},\tilde{h}_2(qx,y)\right), \ \text{ where } x \in \left[0,\frac{1}{q}\right).$$ 
Clearly, $\hat{h}$ is $(\frac{1}{q},0)$-periodic and satisfies the required properties. 
\end{proof}
For our explicit construction, we pick the proximity parameters as follows:  
\begin{align} \label{eqn:4:4.2}
    \epsilon_n = \frac{\mathfrak{d}_n}{2k_n^{8}q_n^2 \cdot \|DH_{n-1}\|_{0}^2 \cdot (2\vertiii{\phi_n}_{2}+1)} \ \text{ with } \ \mathfrak{d}_n= \frac{1}{2^{n^2+1}\cdot n^2 \cdot l_n}.
    \end{align}
    Here, we will have to choose $\mathfrak{d}_n$ sufficiently small to meet specific conditions, depending upon $H_{n-1}$, stated in Sections~\ref{sec:conv} and~\ref{sec:metric}. This can be achieved by choosing a sufficiently large $l_n$.
    
By applying Corollary~\ref{coro:4:4.1} we obtain the maps $\hat{\phi}_n$ and $\hat{g}_n$ in $\textup{Diff}_{\infty}^{\omega}(\T^2, \mu)$ approximating $\phi_n$ and $g_n$, respectively, and satisfying the following conditions: 
 \begin{itemize}
        \item $d_r(\hat{\phi}_n\restriction_{\T^2}, \phi_n) < \epsilon_n \  \text{and} \  d_r(\hat{g}_n\restriction_{\T^2}, g_n) < \epsilon_n \  \text{for all} \ r \in \N,$
         \item $\sup_{x\in \T^2} d((\phi_n,\mathrm{d}\phi_n)(x), (\hat{\phi}_n\restriction_{\T^2},\mathrm{d} \hat{\phi}_n)(x)) < \epsilon_n, \ \text{and} $
        \item $\sup_{x\in \T^2} d((g_n,\mathrm{d}g_n)(x), (\hat{g}_n\restriction_{\T^2},\mathrm{d} \hat{g}_n)(x)) < \epsilon_n.$
 \end{itemize}
 
\section{Criterion of an invariant measurable Riemannian metric}

We follow the criteria for the existence of an $f$-invariant measurable Riemannian metric, as deduced in {\cite[ section 4.8]{GKa00} and \cite[section 7]{Ku17}}. Let $\omega_0$ be the standard Riemannian metric for $\T^2$. The following definition will help to express closeness to being a local isometry for conjugation map $h_n$ on the elements of some partial partition ${\zeta}_n.$

\begin{definition}\label{def:8:8.1}
We can define the \emph{deviation from isometry} for a diffeomorphism $f$ defined on a compact subset $U$ of a smooth Riemannian manifold by
$$\text{dev}_{U}(f):= \max_{v\in \mathrm{T}U, \|v\|=1} |\log \|\mathrm{d}f(v)\||.$$  
\end{definition}

\begin{remark} \label{rem:dev}
Observe that $\text{dev}_{U}(f) =0$ if and only if $f$ is a smooth isometry of $U$. Furthermore, we have $\text{dev}_U(f) = \text{dev}_{f\left(U\right)} \left(f^{-1}\right).$
\end{remark}

\begin{proposition}[Criterion for the existence of a $f$-invariant measurable Riemannian metric]\label{prop:metric} Let $f_n=H_n\circ R_{\alpha_{n+1}}\circ H_n^{-1},$ defined by (\ref{eq:3a}) and (\ref{eq:3:3b}), such that $(f_n)_{n\in \N}$ converges to a limit diffeomorphism $f$ in the $\textup{Diff}^{\infty}$-topology. Let $\left(\zeta_n\right)_{n \in \mathbb{N}}$ be a sequence of partial partitions whose elements cover a set of measure at least $1-\frac{1}{n^2}$ for every $n \in \mathbb{N}$. There exists a function $\kappa: \textup{Diff}^{\infty}(M,\mu)\to \left(0,1\right)$ such that for every decreasing sequence $(\mathfrak{d}_n)_{n\in \N}$ of positive reals satisfying $\sum_{k\geq n}\mathfrak{d}_k < \kappa(H_{n-1})$ for every $n \in \N$ the following holds: If for every $n \in \mathbb{N}$ and every partition element $I_n\in \zeta_n$ the conjugation map $h_n$ satisfies $\text{dev}_{I_n}(h_n)\leq \frac{2\mathfrak{d}_n}{\|{DH_{n-1}}\|_0^2}$, then the limit diffeomorphism $f$ admits an invariant measurable Riemannian metric.
\end{proposition}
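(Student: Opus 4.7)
The plan is to construct the invariant measurable metric as a pointwise $\mu$-a.e.\ limit of $f_n$-invariant smooth metrics. Let $\omega_0$ denote the standard Riemannian metric on $M$ and set $\omega_n := (H_n)_\ast \omega_0$, so that $(f_n)^\ast \omega_n = \omega_n$ because $R_{\alpha_{n+1}}$ is an isometry of $\omega_0$ and $f_n = H_n\circ R_{\alpha_{n+1}}\circ H_n^{-1}$. The three steps are: (i) a step-to-step comparison of $\omega_n$ with $\omega_{n-1}$ bounded by $\mathfrak{d}_n$ on a large set; (ii) a Borel--Cantelli argument using summability of $1/n^2$ to obtain a.e.\ convergence; (iii) passage to the limit via $C^\infty$-convergence $f_n\to f$ to obtain $f$-invariance of the limit metric.

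For (i), writing $H_n = H_{n-1}\circ h_n$ and $q := H_{n-1}^{-1}(p)$, at $p\in H_n(I_n)$ for $I_n\in\zeta_n$ a direct computation yields
\[
\omega_n(p) - \omega_{n-1}(p) = DH_{n-1}^{-T}(p)\bigl[Dh_n^{-T}(q)Dh_n^{-1}(q) - I\bigr] DH_{n-1}^{-1}(p).
\]
By Remark~\ref{rem:dev}, the hypothesis $\text{dev}_{I_n}(h_n) \leq 2\mathfrak{d}_n/\|DH_{n-1}\|_0^2$ yields (after exponentiating and linearizing, valid for $\mathfrak{d}_n$ small) the operator-norm bound $\|Dh_n^{-T}Dh_n^{-1} - I\|_{\mathrm{op}} \leq C\mathfrak{d}_n/\|DH_{n-1}\|_0^2$ on $h_n(I_n)$. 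The outer Jacobians contribute $\|DH_{n-1}^{-1}\|_{\mathrm{op}}^2$; since $M$ is $2$-dimensional and $H_{n-1}$ is area-preserving, $\det DH_{n-1}\equiv 1$ and its singular values are reciprocals, hence $\|DH_{n-1}^{-1}\|_{\mathrm{op}} = \|DH_{n-1}\|_{\mathrm{op}}$. The $\|DH_{n-1}\|_0^{\pm 2}$ factors cancel (up to a dimensional constant), leaving $|\omega_n - \omega_{n-1}|_{\mathrm{op}}(p) \leq C'\mathfrak{d}_n$ on the good set $G_n := H_n\bigl(\bigcup_{I\in\zeta_n} I\bigr)$.

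For (ii), each $h_k$ is $\mu$-preserving so $\mu(M\setminus G_n) = \mu(M\setminus\bigcup\zeta_n) \leq 1/n^2$, and by the first Borel--Cantelli lemma the set $G := \{p : p\in G_n \text{ for all but finitely many } n\}$ has full measure. I would define $\kappa(H) := 1/(4C'\|DH\|_0^2)$. Then for $p\in G$ belonging to $G_k$ from some $n_0 = n_0(p)$ onwards, the telescoping estimate gives $|\omega_\infty(p) - \omega_{n_0}(p)|_{\mathrm{op}} \leq C'\sum_{k>n_0}\mathfrak{d}_k < C'\kappa(H_{n_0}) = 1/(4\|DH_{n_0}\|_0^2)$, strictly smaller than $\lambda_{\min}(\omega_{n_0}(p)) \geq \|DH_{n_0}\|_0^{-2}$ (again by 2D area-preservation). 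Hence $\omega_n(p)$ is Cauchy with positive-definite limit $\omega_\infty(p)$.

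For (iii), since $\mu$ is $f$-invariant the set $G\cap f^{-1}(G)$ has full measure. On this set, $df_n(p)\to df(p)$ follows from $d_\infty(f_n,f)\to 0$, and combining this with $(f_n)^\ast\omega_n = \omega_n$ together with $\omega_n(p)\to\omega_\infty(p)$ and $\omega_n(fp)\to\omega_\infty(fp)$, one obtains $f^\ast\omega_\infty = \omega_\infty$ at $\mu$-a.e.\ point. The principal technical obstacle is converting the scalar deviation bound into a precise operator-norm control on $Dh_n^{-T}Dh_n^{-1} - I$ and then exploiting the 2D area-preservation identity $\|DH_{n-1}^{-1}\|_{\mathrm{op}} = \|DH_{n-1}\|_{\mathrm{op}}$: this cancellation is precisely what pairs the $\|DH_{n-1}\|_0^2$ factor in the hypothesis with the $\|DH_{n-1}^{-1}\|_0^2$ factor from pushing forward, so that the perturbation estimate becomes $H_{n-1}$-independent. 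The $H$-dependence of $\kappa$ exactly absorbs the dimensional constants from this cancellation and matches the decay rate of the minimum eigenvalue of $\omega_{n-1}$.
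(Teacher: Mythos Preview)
Your approach is essentially the paper's: define $\omega_n=(H_n^{-1})^*\omega_0$ (your $(H_n)_*\omega_0$ is the same thing), obtain the step-wise bound $|\omega_n-\omega_{n-1}|\lesssim\mathfrak{d}_n$ on $H_n(\bigcup\zeta_n)$ via the deviation hypothesis and the 2D area-preservation identity $\|DH_{n-1}^{-1}\|_{\mathrm{op}}=\|DH_{n-1}\|_{\mathrm{op}}$, telescope with Borel--Cantelli for a.e.\ convergence, and choose $\kappa(H)$ so that the tail is smaller than $\lambda_{\min}(\omega_{n-1})\gtrsim\|DH_{n-1}\|_0^{-2}$. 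The paper does not write any of this out; it simply cites \cite[Lemmas~7.2,~7.3,~7.5]{Ku17} for the three steps, so you have supplied the details the paper defers.

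One point in (iii) needs more than what you list. From $(f_n)^*\omega_n=\omega_n$ one has $(Df_n(p))^T\omega_n(f_np)\,Df_n(p)=\omega_n(p)$, and passing to the limit requires $\omega_n(f_np)\to\omega_\infty(fp)$ a.e., not merely $\omega_n(fp)\to\omega_\infty(fp)$. Since the $\omega_n$ have no uniform modulus of continuity and $\omega_\infty$ is only measurable, this does not follow from $f_n\to f$ in $C^1$ and pointwise convergence at $p$ and $fp$. The fix is short: rewrite the invariance as $\omega_n(f_np)=(Df_n(p))^{-T}\omega_n(p)(Df_n(p))^{-1}$, which converges a.e.\ on $G$ to $(Df(p))^{-T}\omega_\infty(p)(Df(p))^{-1}$; then use Lusin for $\omega_\infty$ together with $\mu$-invariance of $f_n$ and $d_0(f_n,f)\to 0$ to show $\omega_n\circ f_n\to\omega_\infty\circ f$ in measure, and conclude by uniqueness of limits in measure. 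This is precisely the content of \cite[Lemma~7.5]{Ku17} that the paper invokes.
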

\begin{proof}
   The proof follows along the lines of \cite[section 7]{Ku17}. First, we put $\omega_n := (H_n^{-1})^{*}\omega_0,$ and each $\omega_n$ is a smooth Riemannian metric as it is defined as the pullback of a smooth metric via a diffeomorphism. Since $R_{\alpha_{n+1}}^{*}\omega_0 = \omega_0$ the metric $\omega_n$ is $f_n$-invariant:
$$f_n^{*}\omega_n = (H_n\circ R_{\alpha_{n+1}\circ H_n^{-1}})^{*}(H_n^{-1})^{*} \omega_0 = (H_n^{-1})^{*} R_{\alpha_{n+1}}^{*}H_n^{*} (H_n^{-1})^{*}\omega_0 =(H_n^{-1})^{*} R_{\alpha_{n+1}}^{*}\omega_0 = \omega_n. $$
With our assumptions that $\text{dev}_{I_n}(h_n) \leq \frac{\mathfrak{d}_n}{\|D H_{n-1}\|_0^2}$ for every $I_n\in {\zeta}_n$  and ${\zeta}_n$ covering a set of measure at least $1-\frac{1}{n^2}$, we can compute as in \cite[Lemma~7.2]{Ku17} that $\omega_{\infty}:= \lim_{n\rightarrow \infty} \omega_n$ exists almost everywhere with respect to Lebesgue measure $\mu$. To show that $\omega_{\infty}$ is a measurable Riemannian metric, we follow the proof of \cite[Lemma~7.3]{Ku17}. This proof requires sufficient closeness of $\omega_{\infty}$ to $\omega_{n-1}=(H_{n-1}^{-1})^{*}\omega_0$ on a set of large measure. We express this requirement via the function $\kappa$. Finally, we apply \cite[Lemma~7.5]{Ku17} to conclude that $\omega_{\infty}$ is $f$-invariant.
\end{proof}

\section{Criterion for weak mixing of the derivative extension}\label{sec:crit_weak_mixing}
Building on criteria for weak mixing for AbC diffeomorphisms on $m$-dimensional manifolds in \cite{FS} and \cite{GKu15}, here we introduce such a criterion for weak mixing of the projectivized derivative extension. 

Throughout this section we assume that the limiting smooth diffeomorphism $(f, \mathrm{d}f)$ (see \ref{eq:3a}) and the limiting analytic diffeomorphism $(\hat{f},\mathrm{d}\hat{f})$ (see \ref{eqn:4:4.1a}), admit an invariant measure $\bar{\mu}$. Strictly speaking we will be working with two different metrics for $f$ and $\hat{f}$ and, hence, different measures for the smooth and the analytic  case, as the metrics will depend on the limiting diffeomorphims. To avoid unnecessary notational complexity we will use only one notation, namely $\bar{\mu}$.

\subsection{\texorpdfstring{$(\gamma, \delta, \varepsilon_1, \varepsilon_2)$}{Lg} distribution}
A key concept in our weak mixing criterion for the projectivized derivative extension is the notion of $(\gamma, \delta, \varepsilon_1, \varepsilon_2)$ distribution that we introduce in this subsection. It generalizes the notion of $(\gamma, \delta, \varepsilon)$ distribution from  \cite{FS} that plays a central role in criteria for weak mixing for AbC diffeomorphisms in \cite{FS} and \cite{GKu15}.

To express the $(\gamma, \delta, \varepsilon_1, \varepsilon_2)$ distribution property, we consider a partial partition, denoted as $\hat{\eta}_n,$ of the space $\mathbb{P} \mathrm{T}M$ of a particular form. We let $k_n$ be the number defined by condition \ref{item:P1} and we let $\tilde{\eta}_n$ be a partial decomposition of $M$ satisfying the following properties:
\begin{enumerate}[label={\bf(D\arabic*)}]
\item\label{item:D1} The conjugation map $h_n = g_n\circ \phi_n$ acts as a composition of translations and rotations, thus acting as an isometry, on the elements of the partial partition $\tilde{\eta}_n$.
\item\label{item:D2} Each partition element $\tilde{I}_n\in\tilde{\eta}_n$ is of the form $\tilde{I}_n =\bigcup_{l=0}^{k_n -1} \tilde{I}_{n,l}$, where each $\tilde{I}_{n,l}$ is a union of squares with a side length smaller than $\frac{1}{2k_n^5q_n}$, lying in the good domain $\mathcal{G}_n$ of the map $g_n$ (see \eqref{eqn:gooddomain_g_n}).
    \item\label{item:p1} Each set $\tilde{I}_{n,l}$ has the same measure of at least $\frac{1}{2k_n^6q_n}\left(1 - \frac{16}{k_n^5}\right)$. The union of elements from $\tilde{\eta}_n$ covers a set with a measure of at least $1 - \frac{25}{k_n^5}$ in $M.$
    \item \label{item:p2}
      For any $\tilde{I}_{n,l}$, we have $\mu(\tilde{I}_{n,l} \triangle [\cup_{\tilde{I}_{n,s}^i \in \bar{\Lambda}_{\tilde{I}_{n,l}}} h_n(\tilde{I}_{n,s}^i)]) \leq  \frac{41}{k_n^5} \mu(\tilde{I}_{n,l})$, where 
    $$\bar{\Lambda}_{\tilde{I}_{n,l}} = \{\tilde{I}_{n,s}^i\ : \  h_{n}(\tilde{I}_{n,s}^i) \cap \tilde{I}_{n,l} 
 \neq \emptyset, \,   \tilde{I}_{n}^i =  \cup_{s =0}^{k_{n}-1}\tilde{I}_{n,s}^{i}\in \tilde{\eta}_{n}\}.$$
    \item\label{item:p4} 
     Each $\tilde{I}_{n,l}$ is covered by elements of $\tilde{\eta}_{n+1}$ by measure of at least $\left(1-\frac{25}{k_{n+1}^5}\right)\mu(\tilde{I}_{n,l})$ and, in particular,   
    $$\mu\left(\tilde{I}_{n,l}\triangle [ \cup_{\tilde{I}_{n+1,l'}^i\in \Lambda_{\tilde{I}_{n,l}}}\tilde{I}_{n+1,l'}^i]\right)\leq \frac{25}{k_{n+1}^5}\mu(\tilde{I}_{n,l}),$$ where $$\Lambda_{\tilde{I}_{n,l}} = \{\tilde{I}_{n+1,l'}^i  \ : \    \tilde{I}_{n+1,l'}^i \cap \tilde{I}_{n,l} \neq \emptyset, \, \tilde{I}_{n+1}^i =  \cup_{l' =0}^{k_{n+1}-1}\tilde{I}_{n+1,l'}^{i}\in \tilde{\eta}_{n+1}\}.$$
\end{enumerate}
Then we let $\hat{\eta}_n$ be a partial partition of the space $\mathbb{P} \mathrm{T}M$ of the following form:
\begin{align}\label{eq:6:1a}
    \hat{\eta}_n= \left\{ \tilde{I}_n \times T_j \ \ : \ \ \tilde{I}_n\in \tilde{\eta}_n, \ j=0,\dots , k_n -1 \right\},
\end{align}    
where $T_j=\left[\frac{j}{k_n},\frac{j+1}{k_n}\right]$, that is, elements $\hat{I}_{n,j} \in \hat{\eta}_n$ are of the form
\begin{align*}
    \hat{I}_{n,j}= \tilde{I}_n \times T_j = \bigcup_{l=0}^{k_n-1}\tilde{I}_{n,l,j}, \ \text{ with } \ \tilde{I}_{n,l,j}\coloneqq \tilde{I}_{n,l}\times T_j, 
\end{align*}
where $\tilde{I}_n =\bigcup_{l=0}^{k_n -1}\tilde{I}_{n,l}\in \tilde{\eta}_n$ and $j\in \{0,\dots , k_n -1\}$.

\remark For our explicit constructions in Section~\ref{sec:exp_setup}, properties \ref{item:D1}, \ref{item:D2}, \ref{item:p1}, \ref{item:p2}, and \ref{item:p4} are verified in Remark~\ref{rem:8:3:1}, Remark~\ref{rem:assumption_intesection}, Remark~\ref{rem:assumption_intersection2}, and Remark~\ref{item:verify:D5}, respectively.

\definition\label{def:6.2a} A diffeomorphism $(\Phi_n,\mathrm{d}\Phi_n): \mathbb{P} \mathrm{T}M \longrightarrow \mathbb{P} \mathrm{T}M $ is $(\gamma, \delta,\varepsilon_1,\varepsilon_2)$-distributing  $\hat{I}_{n,j} = \tilde{I}_{n}\times T_j \in \hat{\eta}_n$, where $\tilde{I}_n = \cup_{l=0}^{k_n-1}\tilde{I}_{n,l}\in \tilde{\eta}_n,$ if the following properties hold
\begin{enumerate}
    \item for any $l\in \{0,\ldots,k_n-1\},$ we have $J_l \subseteq \pi_{r}(\Phi_n(\tilde{I}_{n,l}))$  with  $1-\delta \leq \lambda(J_l) \leq 1$, 
    where $\pi_{r}$ is the projection onto the $r$-axis and $\lambda$ is the Lebesgue measure of $\mathbb{R}$;
    \item for any $l\in \{0,\ldots,k_n-1\},$ we have $\Phi_n(\tilde{I}_{n,l}) \subseteq K_{c_l,\gamma} \coloneqq {[c_l,c_l+\gamma]\times [0,1]}$ for some constant $c_l\in \T^1$;   
    \item $\Phi_n:M\rightarrow M$ is $\varepsilon_1$-distributing $\tilde{I}_{n}\in \tilde{\eta}_n,$ that is,  for  $l\in \{0,\ldots,k_n-1\}$ and for any $\tilde{J}\subseteq J_l,$
    $${\left\lvert \frac{\mu(\tilde{I}_{n,l}\cap \Phi_n^{-1}(\mathbb{T}\times \tilde{J}))}{\mu(\tilde{I}_{n,l})} - \frac{\lambda(\tilde{J})}{\lambda(J_l)}\right\rvert} \leq \varepsilon_1 \frac{\lambda(\tilde{J})}{\lambda(J_l)};$$    
    \item $(\Phi_n,\mathrm{d}\Phi_n)$ is $\varepsilon_2$-distributing $\hat{I}_{n,j} \in \hat{\eta}_n$ in the tangent direction in the following sense: for all $j,k\in \{0,1,\ldots, k_n-1\}$, there exists a unique $ l \in \{0,1,\ldots, k_n-1\} $ such that $ k \equiv l+j \mod{k_n},$ and for $\tilde{J}\subseteq J_l $ and $T_k=\left[\frac{k}{k_n},\frac{k+1}{k_n}\right]$ we have
    $${\left\lvert \frac{\mu\left(\pi_M\left(\tilde{I}_{n,l}\times T_j \cap (\Phi_n,\mathrm{d}\Phi_n)^{-1}(\mathbb{T}\times \tilde{J}\times T_k)\right)\right)}{\mu(\tilde{I}_{n,l})} - \frac{\lambda(\tilde{J})}{\lambda(J_l)}\right\rvert} \leq \varepsilon_2 \frac{\lambda(\tilde{J})}{\lambda(J_l)},$$
    where $\pi_{M}$ denotes the projection onto the manifold $M$.
    \end{enumerate}    
      We will write the fourth condition as 
     $${\left\lvert \mu\left(\pi_M\left(\tilde{I}_{n,l,j}\cap (\Phi_n,\mathrm{d}\Phi_n)^{-1}(\mathbb{T}\times \tilde{J}\times T_k)\right)\right){\lambda(J_l)} - \mu(\tilde{I}_{n,l}){\lambda(\tilde{J})}\right\rvert} \leq \varepsilon_2 \mu(\tilde{I}_{n,l}){\lambda(\tilde{J})}.$$
     
 In particular, the distribution in the tangent direction implies that the product set of the $l$th segment $\tilde{I}_{n,l}$ of a partition element $\tilde{I}_{n} \in \tilde{\eta}_n$ on the manifold with the $j$th tangent element $T_j$ is mapped under $(\Phi_n,\mathrm{d}\Phi_n)$ to the $k$th tangent element $T_k,$ where $k\equiv l+ j\mod{k_n}$. 

\paragraph{Key Idea of proof:} For $\tilde{I}_n = \cup_{l=0}^{k_n-1}\tilde{I}_{n,l}\in \tilde{\eta}_n$ and $l,j \in \{0,\ldots,k_n-1\}$, we consider  $\Gamma_{n,l,j} = (H_{n-1},\mathrm{d}H_{n-1})(g_n, d g_n) (\tilde{I}_{n,l}\times T_j)$. We approximate its measure by using the ``good domain" of $H_{n-1},$ defined as $\widetilde{G
}_n$ in (\ref{eqn:future_domain}), where the measure $\bar{\mu}$ is computed with the length in the tangent direction and it follows
$$ |\bar{\mu}(\Gamma_{n,l,j})- \frac{1}{k_n}\mu(\tilde{I}_{n,l})| \to 0\ \text{ as } \ n \to \infty.$$ 
In fact, it subsequently follows in Lemma~\ref{lemm~measurepreservation} for $\Gamma_{n,j} = \cup_{l=0}^{k_n-1} \Gamma_{n,l,j}$ that
\begin{align}\label{eqn:Key_idea1}
|\bar{\mu}(\Gamma_{n,j})- \mu(\tilde{I}_{n,l})| \to 0 \ \text{ as } \ n \to \infty.
\end{align}

Analogously, we obtain in Lemma~\ref{lemm~measurepreservation}
for any $l,j \in \{0,\ldots, k_n-1\}$ that
\begin{align*}
|\bar{\mu}(\Gamma_{n,l,j}\cap (f_n,\mathrm{d} f_n)^{-m_n}(C_{n,k}))- \frac{1}{k_n}\mu(\pi_M(\tilde{I}_{n,l}\times T_j\cap (\Phi_n,\mathrm{d}\Phi_n)^{-1}\circ(g_n, \mathrm{d} g_n)^{-1}(\tilde{S}_{n,k}))| \to 0,
\end{align*}
where $C_{n,k} = (H_{n-1},\mathrm{d} H_{n-1})(\tilde{S}_{n,k})$ and $\{\tilde{S}_{n,k}\}$ is a collection of small cubes inside $\mathbb{P}\mathrm{T}M$ that covers almost the entire space.

Together with the distribution property of $(\Phi_n,\mathrm{d} \Phi_n)$ and $(g_n, \mathrm{d} g_n)$ proved in Lemma~\ref{lem: 6.6}, this yields the estimate
\begin{align}\label{eqn:Key_idea2}
|\bar{\mu}(\Gamma_{n,l,j}\cap (f_n,\mathrm{d} f_n)^{-m_n}(C_{n,k})) - \mu(\tilde{I}_{n,l})\bar{\mu}(C_{n,k})| \to 0.
\end{align}
Finally using equations \eqref{eqn:Key_idea1} and \eqref{eqn:Key_idea2}, we obtain that for all $ j,k\in \{0,\ldots,k_n-1\},$ the following estimate holds: 
\begin{align}
 |\bar{\mu}(\Gamma_{n,j}\cap (f_n,\mathrm{d} f_n)^{-m_n}(C_{n,k})) - \bar{\mu}(\Gamma_{n,j})\bar{\mu}(C_{n,k})| \to 0,
\end{align}
which is the final step required for the criterion of weak mixing in Proposition~\ref{prop:6.6.3a}.  

\subsection{Preliminary lemmas for weak mixing}
Using the partial partition $\tilde{\eta}_n$, we denote $G_{n} = \cup_{\tilde{I}_{n}\in \tilde{\eta}_n}h_n(\tilde{I}_{n})$, and define
\begin{align}\label{eqn:future_domain}
\widetilde{G}_n = G_{n} \cap \bigcap_{j=1}^{\infty} h_n\circ h_{n+1}\circ \ldots \circ h_{n+j-1}(G_{n+j}).
\end{align}
We observe that the composition of conjugation maps $h^{-1}_{n+s}\circ \ldots \circ h^{-1}_{n+1} \circ h^{-1}_n, \ s\geq 0,$  acts as an isometry on $\widetilde{G}_n$. This will be useful in approximating the $\bar{\mu}$ measure of specific sets (see Lemma~\ref{lemm~measurepreservation}).

For each $\tilde{I}_n = \cup_{l=0}^{k_n-1}\tilde{I}_{n,l}\in \tilde{\eta}_n$, we define sets $\Breve{I}_{n,l}$  as
$\Breve{I}_{n,l} = \tilde{I}_{n,l}\cap  \widetilde{G}_n.$ Note that each $\tilde{I}_{n,l}$  is almost covered by elements $\tilde{I}_{n+1}\in \tilde{\eta}_{n+1}$ by \ref{item:p4}. Specifically, we have $\mu\left(\tilde{I}_{n,l} \triangle \bigcup_{\tilde{I}_{n+1,l'}^i\in \Lambda_{\tilde{I}_{n,l}}}\tilde{I}_{n+1,l'}^i\right) \leq  \frac{25}{k_{n+1}^5}\mu(\tilde{I}_{n,l}).$ Moreover, \ref{item:p2} states that for each $\tilde{I}_{n+1,l'}^i,$  we have  $$\mu(\tilde{I}_{n+1,l'}^i \triangle  [\cup_{\tilde{I}_{n+1,s}^k\in \bar{\Lambda}_{\tilde{I}_{n+1,l'}^i}} h_{n+1} (\tilde{I}_{n+1,s}^k)])  \leq \frac{41}{k_{n+1}^5}\mu(\tilde{I}_{n+1,l'}^i).$$ 
We denote $\mathcal{C}_{\tilde{I}_{n,l}} = \cup_{\tilde{I}_{n+1,l'}^i\in \Lambda_{\tilde{I}_{n,l}}} \cup_{\tilde{I}_{n+1,s}^k\in \bar{\Lambda}_{\tilde{I}_{n+1,l'}^i}}\tilde{I}_{n+1,s}^k.$ Altogether, for each $\tilde{I}_{n,l},$ we have 
\begin{align}\label{cover_estimate_1}
&\mu\bigg(h_n(\tilde{I}_{n,l})\triangle [\cup_{\tilde{I}_{n+1,s}^k\in \mathcal{C}_{\tilde{I}_{n,l}}} h_n\circ h_{n+1}(\tilde{I}_{n+1,s}^k)]\bigg) \nonumber \\
 \leq & \mu\left(\cup_{\tilde{I}_{n+1,l'}^i\in \Lambda_{\tilde{I}_{n,l}}}\tilde{I}_{n+1,l'}^i  \triangle [\cup_{\tilde{I}_{n+1,s}^k\in \mathcal{C}_{\tilde{I}_{n,l}}} h_{n+1}(\tilde{I}_{n+1,s}^k)]\right) + \mu\left(\cup_{\tilde{I}_{n+1,l'}^i\in \Lambda_{\tilde{I}_{n,l}}}\tilde{I}_{n+1,l'}^i  \triangle \tilde{I}_{n,l}\right) \nonumber\\
 \leq & \sum_{\tilde{I}_{n+1,l'}^i\in \Lambda_{\tilde{I}_{n,l}}}\mu\left(\tilde{I}_{n+1,l'}^i  \triangle [\cup_{\tilde{I}_{n+1,s}^k\in \bar{\Lambda}_{\tilde{I}_{n+1,l'}^i}} h_{n+1}(\tilde{I}_{n+1,s}^k)]\right) + \frac{25}{k_{n+1}^5}\mu(\tilde{I}_{n,l})\nonumber\\
\leq & \sum_{\tilde{I}_{n+1,l'}^i\in \Lambda_{\tilde{I}_{n,l}}} \frac{41}{k_{n+1}^5}\mu(\tilde{I}_{n+1,l'}^i) + \frac{25}{k_{n+1}^5}\mu(\tilde{I}_{n,l}) = \frac{41}{k_{n+1}^5} \mu(\cup_{\tilde{I}_{n+1,l'}^i\in \Lambda_{\tilde{I}_{n,l}}}\tilde{I}_{n+1,l'}^{i}) + \frac{25}{k_{n+1}^5}\mu(\tilde{I}_{n,l}) \nonumber\\
 \leq & \frac{41}{k_{n+1}^5}\left(\mu(\tilde{I}_{n,l}) +\frac{25}{k_{n+1}^5}\mu(\tilde{I}_{n,l})\right) + \frac{25}{k_{n+1}^5}\mu(\tilde{I}_{n,l}) \leq \frac{70}{k_{n+1}^5}\mu(\tilde{I}_{n,l}).
\end{align}
Similarly, using properties \ref{item:p2} and \ref{item:p4}, we find that each $\tilde{I}_{n+1,l'}^i$ is covered by elements of $h_{n+2}(\tilde{I}_{n+2,s'}^k),$ where $\tilde{I}_{n+2,s'}^k \in \mathcal{C}_{\tilde{I}_{n+1,l'}^i}$ and we have the estimate:
\begin{align}\label{eqn:h_{n+1}covering_0}
\mu\left(h_n\circ h_{n+1}(\tilde{I}_{{n+1},l'}^i)\triangle \cup_{\tilde{I}_{n+2,s'}^k\in \mathcal{C}_{\tilde{I}_{n+1,l'}^i}}h_n\circ  h_{n+1}\circ h_{n+2}(\tilde{I}_{n+2,s'}^k)\right)\leq  \frac{70}{k_{n+2}^5}\mu( h_{n+1}(\tilde{I}_{n+1,l'}^i)).
\end{align}
Given any set $\tilde{I}_{n,l}$, we consider the following family of sets: 
\begin{align*}
   & A_{n_1} = \bigcup_{\tilde{I}_{n+1,l'}^i\in  \mathcal{C}_{\tilde{I}_{n,l}}}\bigcup_{\tilde{I}_{n+2,s}^k\in \mathcal{C}_{\tilde{I}_{n+1,l'}^i}}\ldots \bigcup_{\tilde{I}_{n+{n_1},s''}^{\ell}\in \mathcal{C}_{\tilde{I}_{n+ n_1-1,s'}^{i'}}} h_{n}\circ h_{n+1}\circ \ldots \circ h_{n+n_1}(\tilde{I}^{\ell}_{n+n_1,s''}). 
    \end{align*}
Analogous to equations \eqref{cover_estimate_1} and \eqref{eqn:h_{n+1}covering_0}, we have estimates for $n_1=2,3,4,\ldots$,
\begin{align}\label{eqn:symmetric_differenc1}
    \mu(A_{n_1-1}\triangle A_{n_1}) \leq \frac{70}{k_{n+n_1-1}^5}\mu(A_{n_1-1}).
\end{align}
Using the triangle inequality on the aforementioned equation, we conclude that

\begin{align*}
   \mu(A_1\triangle A_3)&\leq \mu(A_1\triangle A_2)+ \mu(A_2\triangle A_3) 
    \leq \mu(A_1\triangle A_2) + \frac{70}{k_{n+2}^5}\mu(A_2)\nonumber \\
   &\leq \mu(A_1\triangle A_2) + \frac{70}{k_{n+2}^5}(\mu(A_1) +\mu(A_1\triangle A_2))\leq \left(1 + \frac{70}{k_{n+2}^5}\right)\mu(A_1\triangle A_2) + \frac{70}{k_{n+2}^5}\mu(A_1)\nonumber\\
   &\leq \left(\left(1 + \frac{70}{k_{n+2}^5}\right)\frac{70}{k_{n+1}^5} + \frac{70}{k_{n+2}^5}\right)\mu(A_1).
\end{align*}
Likewise, by applying recursion with equation \ref{eqn:symmetric_differenc1}, we obtain for each $n_1\geq 3$,
\begin{align}\label{eqn:h_{n+1}covering_2}
\mu(A_1\triangle A_{n_1}) &\leq  \mu(A_1\triangle A_{n_1-1}) + \mu(A_{n_1-1}\triangle A_{n_1}) \nonumber\\
&\leq \left(1+\frac{70}{k_{n+n_1-1}^5}\right)\mu(A_1\triangle A_{n_1-1}) + \frac{70}{k_{n+n_1-1}^5}\mu(A_{1})\nonumber\\
& \leq \left(\sum_{u=1}^{n_1-2}\prod_{j=u+1}^{n_1-1}\left(1+\frac{70}{k_{n+j}^5}\right)\frac{70}{k_{n+u}^5}+ \frac{70}{k_{n+n_1-1}^5}\right)\mu(A_1).
\end{align}
Following this, we apply mathematical induction to derive the approximation for the infinite intersection of these sets $\{A_n\}_{n\in \N}$. Using the inequality $ \mu(A_1\cap A_2\setminus A_3) \leq \mu(A_2\setminus A_3)\leq \mu(A_2\triangle A_3) \leq \frac{70}{k_{n+2}^5}\mu(A_2),$ we obtain for $n=3$:
\begin{align}
    \mu(A_1\cap A_2\cap A_3)&= \mu(A_1\cap A_2) - \mu(A_1\cap A_2 \setminus A_3) 
    \geq \mu(A_1)- \mu(A_1 \triangle A_2) - \frac{70}{k_{n+2}^5}\mu(A_2)\nonumber\\
   &  \geq \mu(A_1)- \mu(A_1 \triangle A_2) - \frac{70}{k_{n+2}^5}(\mu(A_1)+ \mu(A_1\triangle A_2))\nonumber\\
   &\geq \left(1-\left(1+\frac{70}{k_{n+2}^5}\right)\frac{70}{k_{n+1}^5}-\frac{70}{k_{n+2}^5} \right)\mu(A_1).
    \end{align}

By employing mathematical induction and equation \ref{eqn:h_{n+1}covering_2}, one can show that for any $n_1 \geq 3$ we have
\begin{align}
    \mu(A_1\cap A_2\ldots \cap A_{n_1}) 
    &\geq \left(1- \sum_{u=1}^{n_1-2}\prod_{j=u+1}^{n_1-1}\left(1+\frac{70}{k_{n+j}^5}\right)\frac{70}{k_{n+u}^5}- \frac{70}{k_{n+n_1-1}^5}\right)\mu(A_1).
\end{align}
     
With the aforementioned estimates leads to the following conclusion for $n\geq 3$:
\begin{align}\label{eqn:5:5.16}
\mu(\Breve{I}_{n,l})= \mu\left(\tilde{I}_{n,l}\cap \widetilde{G}_n\right) &\geq \left(1- \sum_{u=0}^{\infty}\prod_{j=u+1}^{\infty}\left(1+\frac{70}{k_{n+j}^5}\right)\frac{70}{k_{n+u}^5} -  \frac{70}{k_{n+1}^5}\right)\mu(\tilde{I}_{n,l})\nonumber\\
& \geq \left(1- 2\cdot\sum_{u=0}^{\infty}\frac{70}{k_{n+u}^5}-\frac{70}{k_{n+1}^5}\right)\mu(\tilde{I}_{n,l})\nonumber \\
&\geq \left(1- \frac{36}{k_n^4}\right)\mu(\tilde{I}_{n,l}).
\end{align}
We employed the fact that $\prod_{j=1}^{\infty}\left(1+\frac{70}{k_{n+j}^5}\right)\leq 2,$ and under condition \ref{item:P0}, the sequence $(k_{j})_{j\in \N}$ increases rapidly enough to guarantee that $\sum_{u=0}^{\infty} \frac{140}{k_{n+u}^5} \leq \frac{140}{4k_n^4}$ and $k_{n+1}^5 \geq 2 k_n^4$. 
\remark\label{eqn_domains_measure_preserving}  Moreover, for any subset $S_n'\subseteq \T^2$, we define $\tilde{c}_{n,l} = \tilde{I}_{n,l}\cap \Phi_n^{-1}\circ g_n^{-1}({S}_n')$ and $\Breve{c}_{n,l} = \Breve{I}_{n,l}\cap \Phi_n^{-1}\circ g_n^{-1}({S}_n')$. Thus, we obtain
$|\mu(\tilde{c}_{n,l}) - \mu(\Breve{c}_{n,l})|\leq \mu(\tilde{I}_{n,l}\backslash \Breve{I}_{n,l}) \leq \frac{36}{k_n^4}\mu(\tilde{I}_{n,l}).$
\begin{lemma}\label{lemm~measurepreservation}
   Denote $\mathcal{A}_{n,l,j}= (H_{n-1},\mathrm{d} H_{n-1})\circ (g_n,\mathrm{d}g_n)(\tilde{c}_{n,l,j}),$ where   
   $\tilde{c}_{n,l,j} = \tilde{c}_{n,l} \times T_j$. Then we have 
   \begin{align}\label{eqn:measure_estimate}
   \left\lvert\bar{\mu}(\mathcal{A}_{n,l,j})  - \frac{1}{k_n}{\mu}(\pi_M(\tilde{c}_{n,l,j}))\right\rvert \leq \frac{36(k_n-1)}{k_n^5}\mu(\tilde{I}_{n,l}).
    \end{align}
 Moreover, for any $S_n  \times T_k \subset \mathbb{P}\mathrm{T} M$ and $C_{n,k} = (H_{n-1},\mathrm{d} H_{n-1})(\tilde{S}_n\times T_k )$, where $\tilde{S}_n = S_n \cap \widetilde{G}_n,$ we have
  $ \bar{\mu}(C_{n,k}) = \frac{1}{k_n}\mu(\tilde{S}_n). $
    \end{lemma}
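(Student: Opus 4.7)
The plan is to establish the exact equality in the second statement first (which is cleaner) and then bootstrap it to obtain the approximate identity in the first. Both rely on the key geometric fact that $\omega_{\infty} = \omega_{n-1} := (H_{n-1}^{-1})^{*}\omega_{0}$ on the set $H_{n-1}(\widetilde{G}_n)$. To establish this, fix $p \in H_{n-1}(\widetilde{G}_n)$ and $m \geq n$, and decompose $H_m^{-1} = \Theta_m\circ H_{n-1}^{-1}$ with $\Theta_m := h_m^{-1}\circ\ldots\circ h_n^{-1}$. Since $H_{n-1}^{-1}(p) \in \widetilde{G}_n$, the standing hypothesis makes $\Theta_m$ an $\omega_{0}$-isometry at that point, and the chain rule yields $\omega_m|_p = (H_m^{-1})^{*}\omega_{0}|_p = (H_{n-1}^{-1})^{*}\omega_{0}|_p = \omega_{n-1}|_p$; passing to the limit in $m$ gives $\omega_{\infty}|_p = \omega_{n-1}|_p$. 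Consequently $(H_{n-1}, \mathrm{d}H_{n-1})$ restricts to a bundle isometry between $\pi_M^{-1}(\widetilde{G}_n)$ equipped with $\omega_{0}$ and $\pi_M^{-1}(H_{n-1}(\widetilde{G}_n))$ equipped with $\omega_{\infty}$, preserving both the base volume $\mu = \mu_{\omega_{\infty}}$ (Section~\ref{sec:2.2.1}) and the fiber projective angular measure.

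The second statement then follows immediately: since $\tilde{S}_n \subset \widetilde{G}_n$ and $T_k$ has standard projective length $1/k_n$, the Fubini-type decomposition of $\bar{\mu}$ combined with $\mu$-preservation of $H_{n-1}$ yields $\bar{\mu}(C_{n,k}) = \frac{1}{k_n}\mu(\tilde{S}_n)$.

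For the first statement, I introduce the good piece $\Breve{\mathcal{A}}_{n,l,j} := (H_{n-1}, \mathrm{d}H_{n-1})\circ(g_n, \mathrm{d}g_n)(\Breve{c}_{n,l}\times T_j)$. Property \ref{item:D2} places $\Breve{c}_{n,l} \subset \tilde{I}_{n,l} \subset \mathcal{G}_n$, so Proposition~\ref{lemma:3a}(2) gives $\mathrm{d}g_n \equiv \mathrm{id}$ on $\Breve{c}_{n,l}$, and $g_n$ acts as a pure $\omega_0$-isometric translation there. Using the identity $H_{n-1}\circ g_n = H_n\circ \phi_n^{-1}$, the argument of the second statement can be transferred one level up provided $\phi_n^{-1}(\Breve{c}_{n,l}) \subset \widetilde{G}_{n+1}$ — a compatibility I plan to derive from the nesting $\widetilde{G}_n \subset h_n(\widetilde{G}_{n+1})$ combined with the explicit combinatorial relation between the partitions $\tilde{\eta}_n$ and $\tilde{\eta}_{n+1}$ in property \ref{item:p4}. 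This gives $\bar{\mu}(\Breve{\mathcal{A}}_{n,l,j}) = \frac{1}{k_n}\mu(\Breve{c}_{n,l})$. The error then comes from the signed decomposition
\begin{align*}
\bar{\mu}(\mathcal{A}_{n,l,j}) - \tfrac{1}{k_n}\mu(\tilde{c}_{n,l}) = \bar{\mu}\bigl(\mathcal{A}_{n,l,j}\setminus \Breve{\mathcal{A}}_{n,l,j}\bigr) - \tfrac{1}{k_n}\mu(\tilde{c}_{n,l}\setminus\Breve{c}_{n,l}),
\end{align*}
in which both summands are nonnegative, combined with the bound $\mu(\tilde{c}_{n,l}\setminus\Breve{c}_{n,l}) \leq \frac{36}{k_n^4}\mu(\tilde{I}_{n,l})$ from Remark~\ref{eqn_domains_measure_preserving}.

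The expected main obstacle is the precise justification that $\phi_n^{-1}(\Breve{c}_{n,l}) \subset \widetilde{G}_{n+1}$, since $g_n$ does not obviously preserve $\widetilde{G}_n$; resolving this requires exploiting the commutation relations $g_n\circ R_{1/q_n} = R_{1/q_n}\circ g_n$ together with the explicit block-structure of the partitions assembled in Section~\ref{sec:constr}. A secondary subtlety is extracting the sharp constant $\frac{36(k_n-1)}{k_n^5}$ rather than the weaker $\frac{36k_n}{k_n^5}$ produced by a naive triangle inequality, which relies on the sign cancellation between the two terms of the decomposition above.
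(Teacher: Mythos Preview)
Your derivation of the metric identity $\omega_\infty|_p=\omega_{n-1}|_p$ for $p\in H_{n-1}(\widetilde{G}_n)$ and of the second statement is correct and coincides with the paper's argument.

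For the first statement, the detour through $H_{n-1}\circ g_n=H_n\circ\phi_n^{-1}$ is a wrong turn. The inclusion $\phi_n^{-1}(\Breve{c}_{n,l})\subset\widetilde{G}_{n+1}$ you need does \emph{not} follow from the nesting $\widetilde{G}_n\subset h_n(\widetilde{G}_{n+1})$: that nesting yields only $h_n^{-1}(\Breve{c}_{n,l})=\phi_n^{-1}\bigl(g_n^{-1}(\Breve{c}_{n,l})\bigr)\subset\widetilde{G}_{n+1}$, which differs from your claim by the horizontal translation $g_n$. Since $\widetilde{G}_{n+1}$ is defined through \emph{all} future conjugations $h_{n+1},h_{n+2},\dots$, neither the commutation $g_n\circ R_{1/q_n}=R_{1/q_n}\circ g_n$ nor property~\ref{item:p4} (which only relates $\tilde\eta_n$ to $\tilde\eta_{n+1}$) gives you that $\phi_n(\widetilde{G}_{n+1})$ absorbs this translate; the exact inclusion is in general false. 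The paper avoids the problem by never leaving level $n$: once $(H_{n-1},\mathrm{d}H_{n-1})$ is known to carry the standard product measure on $\widetilde{G}_n\times\mathbb{P}\mathbb{R}^2$ to $\bar\mu$, it writes $\mathcal{A}_{n,l,j}=(H_{n-1},\mathrm{d}H_{n-1})\bigl(g_n(\tilde{c}_{n,l})\times T_j\bigr)$ (using $\mathrm{d}g_n=\mathrm{id}$ on $\mathcal{G}_n$) and bounds the portion of the base $g_n(\tilde{c}_{n,l})$ lying outside $\widetilde{G}_n$ directly via the $\mu$-preservation of $g_n$ together with the estimate~\eqref{eqn:5:5.16}.

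There is a second gap. In your signed decomposition
\[
\bar\mu(\mathcal{A}_{n,l,j})-\tfrac{1}{k_n}\mu(\tilde{c}_{n,l})=\bar\mu\bigl(\mathcal{A}_{n,l,j}\setminus\Breve{\mathcal{A}}_{n,l,j}\bigr)-\tfrac{1}{k_n}\mu(\tilde{c}_{n,l}\setminus\Breve{c}_{n,l}),
\]
nonnegativity of both terms together with smallness of the second does \emph{not} bound the absolute value: you still owe an upper bound on $\bar\mu\bigl(\mathcal{A}_{n,l,j}\setminus\Breve{\mathcal{A}}_{n,l,j}\bigr)$, whose base lies outside the region where $\omega_\infty$ has been identified with $\omega_0$. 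The missing ingredient is that the normalized fibre measure of any arc is at most $1$ regardless of the Riemannian metric at the base point, so this term is controlled by the $\mu$-measure of the bad base set; this is what ultimately produces the error of order $\tfrac{1}{k_n^4}\mu(\tilde{I}_{n,l})$ rather than an exact identity.
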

        \begin{proof}
Note that $\Breve{c}_{n,l} = \tilde{c}_{n,l} \cap \widetilde{G}_n$. For any $x\in \Breve{c}_{n,l}$ let $y = H_{n-1}(x).$ Then let $(y,v), (y,w)$ be defined as $(y, v) = (H_{n-1} (x), \mathrm{d}_{x} H_{n-1}(\bar{v}))$ and $(y, w) = (H_{n-1}(x), 
 \mathrm{d}_{x} H_{n-1}(\bar{w})),$ with $x\in \Breve{c}_{n,l} \subset \widetilde{G}_n, \ \bar{v},\bar{w}\in T_j.$ 
Using the definition of the $f-$invariant Riemannian metric $\omega_{\infty},$ which induces the measure $\bar{\mu}$ invariant under the map  $(f,\mathrm{d}f),$
we conclude that 
\begin{align*}
\omega_{\infty}\restriction_{y}(v,w) &= \lim_{k\rightarrow\infty} (H_k^{-1})^{*}\omega_{0}\restriction_{y}(v,w) = \lim_{k\rightarrow\infty} \omega_{0}\restriction_{H_k^{-1}(y)}(\mathrm{d}_y H_k^{-1}(v), \mathrm{d}_y H_k^{-1}(w))\\
& = \lim_{k\rightarrow\infty}  \omega_{0}\restriction_{h_k^{-1}\circ \ldots \circ h_n^{-1}(x)}(\mathrm{d}_{x} h_k^{-1}\circ \ldots \circ h_n^{-1}(\bar{v}), \mathrm{d}_{x} h_k^{-1}\circ \ldots \circ h_n^{-1}(\bar{w})).
\end{align*}
Since $h_k^{-1}\circ \ldots \circ h_n^{-1}$ is an isometry with respect to $\omega_{0}$ on $\widetilde{G}_n$ and $\omega_{0}$ is independent from the base point, we conclude that $\omega_{\infty}\restriction_{y}(v,w) = \omega_{0}\restriction_{x} (\bar{v}, \bar{w}).$ Thus, for images under the map $(H_{n-1},\mathrm{d}H_{n-1})$ of sets with base points $ x\in \Breve{c}_{n,l}\subseteq \widetilde{G}_n,$
 the measure $\bar{\mu}$ acts as a uniform measure, preserving the length in the tangent direction. By utilizing the estimate in Remark~\ref{eqn_domains_measure_preserving} and noting that 
$g_n^{-1}$ acts as translation  on $\widetilde{G}_n,$ we have
\begin{align*}
    & \frac{1}{k_n}\mu(g_n^{-1}(\Breve{c}_{n,l}))\leq \bar{\mu}(\mathcal{A}_{n,l,j})\leq \frac{1}{k_n}\mu(g_n^{-1}(\Breve{c}_{n,l}))+ \frac{36(k_n-1)}{k_n^5}  \mu(g_n^{-1}(\tilde{I}_{n,l})).
\end{align*}
Finally, using that $g_n$ is measure preserving and estimate $\mu(\tilde{c}_{n,l})\geq \mu(\Breve{c}_{n,l})\geq \mu(\tilde{c}_{n,l}) -  \frac{36}{k_n^4}\mu(\tilde{I}_{n,l}) $, we conclude that
$$\left\lvert\bar{\mu}(\mathcal{A}_{n,l,j})  - \frac{1}{k_n}{\mu}(\pi_M(\tilde{c}_{n,l,j}))\right\rvert \leq \frac{36(k_n-1)}{k_n^5}\mu(\tilde{I}_{n,l}).$$
Analogously to the above estimate, we conclude the other claim. 
\end{proof}
\begin{lemma}\label{sec:6:lem:3}
Consider $g_n: \mathbb{T}^2\to\mathbb{T}^2$ as defined in Proposition {\ref{lemma:3a}}. Let $K$ be an interval on $r$-axis of the form $[\frac{i+2\varepsilon_n}{a_n}, \frac{i+1- 2\varepsilon_n}{a_n}]$ where $i \in \{0,...,a_n-1\}.$ Denote $K_{c_l,\gamma}= [c_l,c_l+\gamma]\times K$
for some constant $c_l\in \mathbb{T}^1$ and $\gamma$. Let $L= [l_1,l_2]$ be an interval on the $\theta$ axis. If $b_n\lambda(K)>2,$ then for
$ Q= \pi_{r}(K_{c_l,\gamma}\cap g_n^{-1}(L\times K)),$
it holds 
\begin{align}
    |\lambda(Q)- \lambda(K)\lambda(L)|\leq \frac{2}{b_n}\lambda(L) + \frac{2\gamma}{b_n}+ \gamma\lambda(K)+ \frac{b_n}{a_n}\lambda(K) +\frac{2}{a_n}.
\end{align}
\end{lemma}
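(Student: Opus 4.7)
The plan is to identify $Q$ explicitly from the shift structure of $g_n$ and then apply an elementary equidistribution argument that exploits the hypothesis $b_n\lambda(K)>2$. From Proposition~\ref{lemma:3a}, the map $g_n$ has the form $g_n(\theta,r)=(\theta+\tilde\psi_n(r)\bmod 1,\,r)$, where $\tilde\psi_n$ is the staircase approximating the linear shear $r\mapsto b_n r$. Therefore $(\theta,r)\in K_{c_l,\gamma}\cap g_n^{-1}(L\times K)$ is equivalent to $r\in K$ together with $\tilde\psi_n(r)\in A\pmod 1$, where $A:=(L-[c_l,c_l+\gamma])\bmod 1$ is an arc of length $\lambda(L)+\gamma$ (possibly split into two pieces by wrap-around, which contributes only an $O(1/b_n)$ boundary term). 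Consequently $Q=\{r\in K:\tilde\psi_n(r)\in A\pmod 1\}$, which reduces the problem to a one-dimensional measure estimate in $r$.

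First I would replace the staircase $\tilde\psi_n$ on $K$ by the linear shear $r\mapsto b_n r$. The two functions differ by at most $b_n/a_n$ on the good parts of $K$, while the bad transition zones have total $r$-measure at most $4\varepsilon_n/a_n$. A perturbation of the argument by at most $b_n/a_n$ inside an equation of the form $\tilde\psi_n(r)\in A$ moves the corresponding $r$-set by at most $\frac{1}{b_n}\cdot\frac{b_n}{a_n}=\frac{1}{a_n}$ on either side of each transition, so the symmetric difference between $Q$ and its shear counterpart $\widetilde Q:=\{r\in K:b_n r\in A\pmod 1\}$ has measure of order at most $\frac{b_n}{a_n}\lambda(K)+\frac{2}{a_n}$.

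Next I would estimate $\lambda(\widetilde Q)$ by the change of variables $s=b_n r$. The image $b_n K$ is an interval of length $N:=b_n\lambda(K)>2$, so it covers $\T^1$ at least twice. Counting how many integer translates of $A$ fall inside an interval of length $N$ yields the elementary bound
\[
\bigl|\lambda(\widetilde Q)-\lambda(K)\lambda(A)\bigr|\leq \frac{\lambda(A)}{b_n}=\frac{\lambda(L)+\gamma}{b_n}.
\]
Expanding $\lambda(K)\lambda(A)=\lambda(K)\lambda(L)+\gamma\lambda(K)$ and combining with the staircase-to-shear comparison by the triangle inequality produces the claimed bound
\[
|\lambda(Q)-\lambda(K)\lambda(L)|\leq \tfrac{2}{b_n}\lambda(L)+\tfrac{2\gamma}{b_n}+\gamma\lambda(K)+\tfrac{b_n}{a_n}\lambda(K)+\tfrac{2}{a_n}.
\]

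The main obstacle will be keeping the two sources of error cleanly separated: the equidistribution error inside $\widetilde Q$ is genuinely of order $\lambda(A)/b_n$ and independent of the staircase structure, whereas the staircase-versus-shear comparison error is of order $b_n\lambda(K)/a_n+1/a_n$ and is independent of the arc $A$. One must also take care not to double count the boundary contributions at the edges of the bad zones and at the wrap-around of $A$, but both are absorbed into the factors of $2$ multiplying $\lambda(L)/b_n$ and $\gamma/b_n$ in the stated estimate.
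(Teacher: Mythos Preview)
The paper does not prove this lemma but simply invokes \cite[Lemma~6.4]{GKu15} for dimension $m=2$; your argument is precisely the standard equidistribution proof behind that result and is correct in outline. Two minor remarks: (i) since the monotone interpolation $\tilde\psi_n$ satisfies $|\tilde\psi_n(r)-b_nr|\le b_n/a_n$ uniformly on $[0,1]$, including on the transition zones, there is no need to treat the bad zones separately, and your claimed total measure $4\varepsilon_n/a_n$ for them would in any case be wrong once $K$ spans many steps (as it does in the application in Lemma~\ref{lem: 6.6}); (ii) the constants your per-crossing estimate actually produces are slightly larger than those stated (for instance a factor $2$ in front of $\frac{b_n}{a_n}\lambda(K)$), but this is immaterial for the application, which only needs the right-hand side to be $o(\mu(S_n))$.
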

\begin{proof}
The proof follows directly from \cite[Lemma 6.4]{GKu15} for dimension $m=2$.
\end{proof}
\begin{lemma}\label{lem: 6.6}
Let $n>4$, $g_n$ be as in Proposition {\ref{lemma:3a}}, and $\hat{I}_{n,j} = \cup_{l=0}^{k_n-1}\tilde{I}_{n,l,j}\in \hat{\eta}_n,$ where $\hat{\eta}_n$ is a partial partition of $\mathbb{P} \mathrm{T}M$ as described by equation (\ref{eq:6:1a}). Suppose the diffeomorphism $(\Phi_n,\mathrm{d}\Phi_n):\mathbb{P} \mathrm{T}M\to \mathbb{P} \mathrm{T}M$ is $(\gamma,\delta,\varepsilon_1,\varepsilon_2)$-distributing $\hat{I}_{n,j}\in\hat{\eta}_n$ with 
$\gamma < \frac{1}{k_n^3q_n}, \delta < \frac{50}{k_n^2}, \varepsilon_1, \varepsilon_2< \frac{1}{n}.$ Let $S_n$ be a square of side length equal to $\frac{1}{k_n}$ lying in $\T^1 \times [\delta,1-\delta]$. Furthermore, let $\tilde{S}_n =S_n \cap \widetilde{G}_n$ using the good domain $\widetilde{G}_n$ from (\ref{eqn:future_domain}). For $k\in \{0,\dots , k_n-1\},$ denote $\tilde{S}_{n,k}= \tilde{S}_n\times T_k$, where $T_k =\left[\frac{k}{k_n},\frac{k+1}{k_n}\right]$. 

Then for every pair $j,k\in \{0,1,\ldots,k_n-1\},$ there exists $l\in  \{0,1,\ldots,k_n-1\}$ such that $k\equiv (l+j)\mod k_n$ and it holds:
$${\left\lvert \mu\left(\pi_M\left(\tilde{I}_{n,l,j}\cap (\Phi_n,\mathrm{d}\Phi_n)^{-1}\circ (g_n,\mathrm{d}g_n)^{-1}(\tilde{S}_{n,k})\right)\right) - {{\mu}}(\tilde{I}_{n,l}){\mu(\tilde{S}_{n})}\right\rvert} \leq \frac{68}{n} {\mu}(\tilde{I}_{n,l})\mu(\tilde{S}_{n}).$$
\end{lemma}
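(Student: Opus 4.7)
The plan is to combine condition~(4) of the $(\gamma,\delta,\varepsilon_1,\varepsilon_2)$-distribution property from Definition~\ref{def:6.2a} with the shear estimate from Lemma~\ref{sec:6:lem:3}. Given $j,k \in \{0,\ldots,k_n-1\}$, I first use the tangent-direction structure in condition~(4) to identify the unique $l$ with $k \equiv l+j \pmod{k_n}$: for any other index $l'$, the image $(\Phi_n,\mathrm{d}\Phi_n)(\tilde{I}_{n,l',j})$ lands outside the fiber $T_k$ up to an $\varepsilon_2$-error. Since $\tilde{S}_n \subseteq \widetilde{G}_n \subseteq \mathcal{G}_n$ and $\mathrm{d}g_n = \mathrm{id}$ on $\mathcal{G}_n$ by Proposition~\ref{lemma:3a}(2), the preimage simplifies to $(g_n,\mathrm{d}g_n)^{-1}(\tilde{S}_{n,k}) = g_n^{-1}(\tilde{S}_n) \times T_k$; and by condition~(2), $\Phi_n(\tilde{I}_{n,l}) \subseteq K_{c_l,\gamma}$, so only the thin parallelogram $K_{c_l,\gamma} \cap g_n^{-1}(\tilde{S}_n)$ is relevant to the intersection with $\tilde{I}_{n,l,j}$.

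Next, I would replace $\tilde{S}_n$ by $S_n$ at a cost of $(36/k_n^4)\mu(S_n)$ via the estimate preceding Lemma~\ref{lemm~measurepreservation}, write $S_n = L \times [b, b + \tfrac{1}{k_n}]$ with $L = [a, a + \tfrac{1}{k_n}]$, and decompose the $r$-projection into the maximal good sub-intervals $K_i = [(i+2\varepsilon_n)/a_n, (i+1-2\varepsilon_n)/a_n]$ lying in $[b, b + \tfrac{1}{k_n}]$. There are roughly $k_n^4$ such strips (since $a_n = k_n^5$), and the $\varepsilon_n$-boundary layers outside $\mathcal{G}_n$ contribute a negligible $4\varepsilon_n/k_n$ in measure. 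For each good strip, Lemma~\ref{sec:6:lem:3} provides
\[
|\lambda(Q_i) - \lambda(K_i)\lambda(L)| \le \tfrac{2}{b_n}\lambda(L) + \tfrac{2\gamma}{b_n} + \gamma\lambda(K_i) + \tfrac{b_n}{a_n}\lambda(K_i) + \tfrac{2}{a_n},
\]
where $Q_i = \pi_r(K_{c_l,\gamma} \cap g_n^{-1}(L \times K_i))$. Setting $Q = \bigsqcup_i Q_i \subseteq J_l$ and applying condition~(4) of Definition~\ref{def:6.2a} with $\tilde{J} = Q$ (noting that $\Phi_n(\tilde{I}_{n,l}) \subseteq K_{c_l,\gamma}$ together with the fact that at each $r$ in the interior of $Q_i$ the slice of $g_n^{-1}(\tilde{S}_n)$ contains the full segment $[c_l, c_l+\gamma]$ forces the preimage of $\mathbb{T} \times Q \times T_k$ to coincide with that of $(K_{c_l,\gamma} \cap g_n^{-1}(\tilde{S}_n)) \times T_k$ up to $\theta$-boundary effects of order $\gamma/b_n$) yields
\[
\big|\mu(\pi_M(\tilde{I}_{n,l,j} \cap (\Phi_n,\mathrm{d}\Phi_n)^{-1}(\mathbb{T}\times Q \times T_k)))\lambda(J_l) - \mu(\tilde{I}_{n,l})\lambda(Q)\big| \le \varepsilon_2\,\mu(\tilde{I}_{n,l})\lambda(Q).
\]

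Finally, I would collect the three error sources---the strip-summation error $|\lambda(Q) - \mu(S_n)|$, the $\delta$-loss from $\lambda(J_l) \in [1-\delta, 1]$, and the $\varepsilon_2$-distribution error---together with the swap $|\mu(\tilde{S}_n) - \mu(S_n)| \le (36/k_n^4)\mu(S_n)$ to produce the stated bound $(68/n)\mu(\tilde{I}_{n,l})\mu(\tilde{S}_n)$. The main obstacle is the bookkeeping: Lemma~\ref{sec:6:lem:3}'s additive contributions $2/a_n$ and $2/b_n$ summed over $\sim k_n^4$ strips would naively dominate the target $\mu(S_n)/n = 1/(n k_n^2)$, so the proof relies on the calibrated parameter choices $\gamma < 1/(k_n^3 q_n)$, $\delta < 50/k_n^2$, $\varepsilon_1, \varepsilon_2 < 1/n$, $b_n = [nq_n^{\sigma}]$ with $\sigma \in (1/4, 1/2)$, $a_n = k_n^5$, and the rapid growth of $q_n$ relative to $k_n$ enforced by \ref{item:P3}--\ref{item:P2}, ensuring that each of the five terms in Lemma~\ref{sec:6:lem:3}, after summation and normalization by $\mu(S_n)$, is bounded by $O(1/n)$, with the explicit constant $68$ emerging from summing the five contributions plus the $\varepsilon_2$-distribution and $\delta$-losses.
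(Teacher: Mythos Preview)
Your outline has the right high-level structure (reduce to the fiber $T_k$ via $\mathrm{d}g_n=\mathrm{id}$ on $\mathcal{G}_n$, confine $\Phi_n(\tilde I_{n,l})$ to $K_{c_l,\gamma}$, then invoke Lemma~\ref{sec:6:lem:3} and condition~(4)), but the strip decomposition in the middle is a genuine gap, and the parameter choices do \emph{not} rescue it.

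First, Lemma~\ref{sec:6:lem:3} cannot be applied to a single strip $K_i$ of width $\approx 1/a_n=1/k_n^5$: the hypothesis $b_n\lambda(K)>2$ fails there, since $b_n/k_n^5=[nq_n^\sigma]/k_n^5$ and from \ref{item:P3} one only has $q_n^\sigma>2k_n$, giving $b_n/k_n^5<2n/k_n^4\ll 1$. More conceptually, on a single strip $g_n$ is a \emph{constant} translation, so $Q_i$ is either all of $K_i$ or empty; there is no equidistribution to exploit strip by strip. The equidistribution phenomenon the lemma measures happens \emph{across} the $\sim k_n^4$ strips, not within them.

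Second, even if one formally summed the lemma's bound over $k_n^4$ strips, the additive term $2/a_n$ alone contributes $2k_n^4/k_n^5=2/k_n=2k_n\cdot\mu(S_n)$, and the term $(2/b_n)\lambda(L)$ contributes $2k_n^3/b_n$, which after dividing by $\mu(S_n)=1/k_n^2$ is $2k_n^5/b_n\geq k_n^4/n$. Neither is $O(1/n)$; both blow up. Your closing sentence asserts the calibrated parameters control these, but no choice of $\sigma,\gamma,\delta$ compatible with the construction does.

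The paper avoids this by applying Lemma~\ref{sec:6:lem:3} \emph{once}, taking $K=\pi_r(S_n\cap\mathcal{G}_n)$ of length $\approx 1/k_n$ (so $b_n\lambda(K)>2$ holds) and $L=\pi_\theta(S_n)$; the lemma then gives $|\lambda(Q)-\mu(S_n\cap\mathcal{G}_n)|\leq \tfrac{13}{n}\mu(S_n)$ directly. The paper also makes the $\theta$-boundary control precise via a two-sided sandwich: in addition to $Q=\pi_r(K_{c_l,\gamma}\cap g_n^{-1}(\tilde S_n))$ it introduces $Q_1=\pi_r(K_{c_l,\gamma}\cap g_n^{-1}(\tilde S_{n,\gamma}))$ for the $\gamma$-shrunk square $\tilde S_{n,\gamma}$, obtaining
\[
\Phi_n(\tilde I_{n,l})\cap(\T^1\times Q_1)\ \subseteq\ \Phi_n(\tilde I_{n,l})\cap g_n^{-1}(\tilde S_n)\ \subseteq\ \Phi_n(\tilde I_{n,l})\cap(\T^1\times Q),
\]
and then applies condition~(4) with $\tilde J=Q$ and $\tilde J=Q_1$ separately. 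Your phrase ``up to $\theta$-boundary effects of order $\gamma/b_n$'' is pointing at this, but the sandwich is what makes the two-sided estimate go through. To fix your proof, drop the strip decomposition, apply Lemma~\ref{sec:6:lem:3} once to the full $r$-interval, and run the $Q$/$Q_1$ sandwich.
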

\begin{proof}
Denote $\tilde{S}_{n,k} = \tilde{S}_n\times T_k$ with $\tilde{S}_n =S_n \cap \widetilde{G}_n,$ where $S_n$ is a square of side length equal to $\frac{1}{k_n}$ lying in $\T^1 \times [\delta,1-\delta].$ Let $\tilde{S}_{\theta}= \pi_{\theta}(\tilde{S}_n)\subseteq \pi_{\theta}({S}_n) = [s_1,s_2], \ \tilde{S}_r= \pi_r(\tilde{S}_n) \subseteq \pi_{r}({S}_n)$,
where $\pi_{\theta}$ and $\pi_{r}$ are projections on $\theta$ and $r$ axis, respectively. For $\tilde{I}_n = \cup_{l=0}^{k_n-1}\tilde{I}_{n,l} \in \tilde{\eta}_n$ and $l\in \{0,\ldots,k_n-1\},$
denote $J_l = \pi_r(\Phi_n(\tilde{I}_{n,l})).$ 

Since $g_n^{-1}$ acts as a translation on $\tilde{S}_n$, we have $\mathrm{d}_pg_n^{-1} \equiv \text{id}$ for all base points $p \in \tilde{S}_n$. 
Thus, the following holds using $(\Phi_n,\mathrm{d}\Phi_n)$ being $(\gamma,\delta,\varepsilon_1,\varepsilon_2)$-distributing: 
 \begin{align}
     \Phi_n(\tilde{I}_{n,l})\cap g_n^{-1}(\tilde{S}_n)\subseteq [c_l,c_l+\gamma]\times \tilde{S}_r \subseteq K_{c_l,\gamma}, \label{lem:cl:1} \\
    (g_n,\mathrm{d}g_n)^{-1}(\tilde{S}_n \times T_k) = g_n^{-1}(\tilde{S}_n)\times T_k. \label{lem:cl:2}
\end{align}
Consider the following collection of sets, for
$\tilde{S}_{n,\gamma} \subset \tilde{S}_n$ where  $\tilde{S}_{n,\gamma}\subseteq [s_1+\gamma, s_2-\gamma]\times \tilde{S}_r$, 
\begin{align}
    Q= \pi_{r}(K_{c_l,\gamma}\cap g_n^{-1}(\tilde{S}_{n})) \ \ \ \text{and} \ \
    Q_1= \pi_{r}(K_{c_l,\gamma}\cap g_n^{-1}(\tilde{S}_{n,\gamma})).\nonumber
\end{align}
Then we have the following containment relation using (\ref{lem:cl:1}):
\begin{align} \label{eqn:6.6.2}
\Phi_n(\tilde{I}_{n,l})\cap \T^1 \times Q_1 \subseteq \Phi_n(\tilde{I}_{n,l})\cap g_n^{-1}(\tilde{S}_n)\subseteq \Phi_n(\tilde{I}_{n,l})\cap \T^1 \times Q.
\end{align}
Subsequently, we use equation (\ref{lem:cl:2}) to express the following: for any $ k,j\in \{0,\ldots,k_n-1\},$ there exist a $l\in \{0,\ldots,k_n-1\}$  such that $k \equiv (l+j)\mod k_n,$
\begin{align} \label{eqn:6.6.4}
(\Phi_n,\mathrm{d} \Phi_n)(\tilde{I}_{n,l,j})\cap (g_n,\mathrm{d} g_n)^{-1}(\tilde{S}_n \times T_k) = (\Phi_n(\tilde{I}_{n,l})\cap g_n^{-1}(\tilde{S}_n)) \times  (\pi_v((\Phi_n,\mathrm{d}\Phi_n)(\tilde{I}_{n,l,j}))\cap T_k),
\end{align}
where $\pi_v$ represents the projection to the projective tangent vector, i.e. projection in the third coordinate. Thus, altogether with  (\ref{eqn:6.6.2}) and (\ref{eqn:6.6.4}), we have the following inclusions 
$$ (\Phi_n,\mathrm{d}\Phi_n)(\tilde{I}_{n,l,j})\cap (g_n,\mathrm{d}g_n)^{-1}(\tilde{S}_n\times T_k)\subseteq (\Phi_n,\mathrm{d}\Phi_n)(\tilde{I}_{n,l,j})\cap (\T^1\times Q\times T_k),$$
$$(\Phi_n,\mathrm{d}\Phi_n)(\tilde{I}_{n,l,j})\cap (\T^1\times Q_1\times T_k)\subseteq (\Phi_n,\mathrm{d}\Phi_n)(\tilde{I}_{n,l,j})\cap (g_n,\mathrm{d}g_n)^{-1}(\tilde{S}_n\times T_k).$$
We conclude that
\begin{align}
   &{\left\lvert \mu\left(\pi_M\left(\tilde{I}_{n,l,j} \cap (\Phi_n,\mathrm{d}\Phi_n)^{-1}\circ (g_n,\mathrm{d}g_n)^{-1}(\tilde{S}_{n,k})\right)\right)\lambda(J_l) - {\mu}(\tilde{I}_{n,l}){\mu}(\tilde{S}_{n})\right\rvert}\nonumber\\
   &\leq \max \Big( \big\lvert \mu\big(\pi_M\big(\tilde{I}_{n,l,j}\cap (\Phi_n,\mathrm{d}\Phi_n)^{-1}(\T^1\times Q_1\times T_k)\big)\big)\lambda(J_l) - {\mu}(\tilde{I}_{n,l}){\mu}(\tilde{S}_{n})\big\rvert, \nonumber\\
   & \quad \quad \big|\mu\big(\pi_M\big(\tilde{I}_{n,l,j}\cap (\Phi_n,\mathrm{d}\Phi_n)^{-1}(\T^1\times Q \times T_k)\big)\big)\lambda(J_l) - {\mu}(\tilde{I}_{n,l}){\mu}(\tilde{S}_{n})\big|\Big).\label{lem:eq:3a}
\end{align}
Next we apply Lemma \ref{sec:6:lem:3} for $S_n' = S_n\cap \mathcal{G}_n,$ where $\mathcal{G}_n$ is defined by (\ref{eqn:gooddomain_g_n}). In particular, $|\mu(S_n)- \mu(S_n')|\leq 4 \varepsilon_n \mu(S_n)$.  By our choices $\gamma < \frac{1}{k_n^3q_n}, \delta < \frac{50}{k_n^2}, \varepsilon_1 = \varepsilon_2 < \frac{1}{n}, K = \pi_{r}(S_n'), L= \pi_{\theta}(S_n'), b_n= [nq_n^{\sigma}], a_n= k_n^5, \varepsilon_n \leq \frac{1}{2k_n^{10}}$ we have $\lambda(L)\leq \frac{1}{k_n}, \lambda(K)\leq \frac{1}{k_n},$ and for $n>4$ using the condition \ref{item:P1} and \ref{item:P3}, we find that $\frac{1}{q_n^{\sigma}}<\frac{1}{2k_n},$ $k_n> n^2,$  $\frac{b_n}{a_n} \leq \frac{1}{n k_n^2},$ and $b_n\lambda(K) >2$. Altogether, we obtain
\begin{align}
    |\lambda(Q)- \mu({S}_n')| \leq \left(\frac{2}{[nq_n^{\sigma}]}\lambda(S_{\theta}) + \frac{2}{[nq_n^{\sigma}] \cdot k_n^3q_n} +\frac{1}{k_n^{3}q_n}\lambda({S}_{r}) + \frac{[nq_n^{\sigma}]}{2k_n^9q_n}\lambda({S}_r) + \frac{2}{2k_n^9q_n}\right)\leq \frac{13}{n}\mu({S}_n). \nonumber
\end{align}
Using the estimate $|\mu(S_n)- \mu(\tilde{S}_n)|\leq \frac{36}{k_n^4}\mu(S_n),$ we have $|\lambda(Q)- \mu(\tilde{S}_n)|\leq |\lambda(Q)- \mu({S}_n')| +|\mu(S_n')- \mu({S}_n)|+ |\mu(S_n)- \mu(\tilde{S}_n)| \leq \frac{30}{n}\mu(\tilde{S}_n). $
In particular, $\lambda({Q})\leq (\frac{30}{n}+ 1)\mu(\tilde{S}_n) \leq 2\mu(\tilde{S}_n).$ Similarly we estimate $\lambda(Q_1)\leq 2\mu(\tilde{S}_n)$ as well as $|\lambda(Q_1)- \mu(\tilde{S}_{n,\gamma})|\leq \frac{30}{n}\mu(\tilde{S}_n).$

Since $Q$ and $Q_1$ are finite unions of disjoint intervals contained within $J_l$, we utilize the $(\Phi_n,\mathrm{d}\Phi_n)$-$(\gamma,\delta,\varepsilon_1,\varepsilon_2)$ distribution property with $\tilde{J}= Q \subseteq J_l,$ and we have
\begin{align}
&\left\lvert \mu\left(\pi_M\left(\tilde{I}_{n,l,j}\cap (\Phi_n,\mathrm{d}\Phi_n)^{-1}(\T^1\times Q\times T_k)\right) \right) \lambda(J_l) - {\mu}(\tilde{I}_{n,l})\mu(\tilde{S}_n)\right\rvert \nonumber\\
&\leq \left\lvert \mu\left(\pi_M\left(\tilde{I}_{n,l,j}\cap (\Phi_n,\mathrm{d}\Phi_n)^{-1}(\T^1\times Q\times T_k)\right)\right)\lambda(J_l) - {\mu}(\tilde{I}_{n,l})\lambda(Q)\right\rvert + {\mu}(\tilde{I}_{n,l})| \lambda(Q)- \mu(\tilde{S}_n)| \nonumber\\
&\leq \left(\varepsilon_2 + \frac{30}{n}\right){\mu}(\tilde{I}_{n,l})\mu(\tilde{S}_n). \label{lem:eq:1a}
\end{align}
Note that $\mu(\tilde{S}_{n,\gamma})= \mu(\tilde{S}_n)- 2\gamma$, and $|\mu(\tilde{S}_{n,\gamma})- \mu(\tilde{S}_n)|\leq \frac{2}{2k_n^3q_n}\leq \frac{2}{n}\mu(\tilde{S}_n).$ Analogously we obtain,
\begin{align}
   \left\lvert \mu\left(\pi_M\left(\tilde{I}_{n,l,j}\cap (\Phi_n,\mathrm{d}\Phi_n)^{-1}(\T^1\times Q_1\times T_k)\right)\right)\lambda(J_l) - {\mu}(\tilde{I}_{n,l})\mu(\tilde{S}_n)\right \rvert  \leq \frac{32}{n}{\mu}(\tilde{I}_{n,l})\mu(\tilde{S}_n).\label{lem:eq:2a}
\end{align}
Using estimates from (\ref{lem:eq:1a}) and (\ref{lem:eq:2a}) in (\ref{lem:eq:3a}), we have
\begin{align*}
    {\left\lvert \mu\left(\pi_M\left(\tilde{I}_{n,l,j} \cap (\Phi_n,\mathrm{d}\Phi_n)^{-1}\circ (g_n,\mathrm{d}g_n)^{-1}(\tilde{S}_{n,k})\right)\right){\lambda(J_l)} - {\mu}(\tilde{I}_{n,l})\mu(\tilde{S}_n)\right\rvert} \leq  \frac{32}{n}{\mu}(\tilde{I}_{n,l})\mu(\tilde{S}_n).
\end{align*}
Finally, using the triangle inequality with $\lambda(J_l)\geq 1-\frac{50}{k_n^2} \geq \frac{1}{2},$ and $\frac{1-\lambda(J_l)}{\lambda(J_l)}\leq \frac{100}{k_n^2}\leq \frac{4}{n}$ we conclude the claim as
\begin{align*}
    &{\bigg\lvert \mu\left(\pi_M\left(\tilde{I}_{n,l,j} \cap (\Phi_n,\mathrm{d}\Phi_n)^{-1}\circ (g_n,\mathrm{d}g_n)^{-1}(\tilde{S}_{n,k})\right)\right) - {\mu}(\tilde{I}_{n,l})\mu(\tilde{S}_n)\bigg\rvert} \nonumber \\
   &  \leq  \frac{1}{\lambda(J_l)} 
   \left\lvert \mu\left(\pi_M\left(\tilde{I}_{n,l,j} \cap (\Phi_n,\mathrm{d}\Phi_n)^{-1}\circ (g_n,\mathrm{d}g_n)^{-1}(\tilde{S}_{n,k})\right)\right)\lambda(J_l) - {\mu}(\tilde{I}_{n,l})\mu(\tilde{S}_n)\right\rvert \nonumber \\
   & \qquad + \frac{1-\lambda(J_l)}{\lambda(J_l)} {\mu}(\tilde{I}_{n,l}) \mu(\tilde{S}_n)\\
   & \leq   \frac{64}{n}{\mu}(\tilde{I}_{n,l})\mu(\tilde{S}_n) + \frac{4}{n}{\mu}(\tilde{I}_{n,l}) \mu(\tilde{S}_n) \leq \frac{68}{n}{\mu}(\tilde{I}_{n,l})\mu(\tilde{S}_n).\nonumber\qedhere
\end{align*}
\end{proof}
\remark\label{lem:corolarly} The statement of the above lemma still holds for $\tilde{S}_n = S_n\cap \widetilde{G}_n,$ where $S_n$ is a square with a side length in the range $\frac{1}{4k_n}$ to $\frac{2}{k_n}.$
\begin{lemma}\label{thm:6.12}
Let $(f,\mathrm{d}f)= \lim_{n\to\infty} (f_n,\mathrm{d}f_n)$ and $(m_n)_{n\in \N}$ be a sequence of natural numbers satisfying  $d_1(f_n^{m_n}, f^{m_n})< \frac{1}{2^n}$
for all $n\in \N$. Furthermore, let $\hat{\nu}_n$ be a sequence of partial partitions satisfying $\hat{\nu}_n \to \epsilon$ and the following property: For every $3$-dimensional cube $A\subseteq \mathbb{P} \mathrm{T}M$ and for every $\varepsilon\in (0,1],$ there exists $N\in \N$ such that for every $n\geq N,$ and every $\Gamma_n\in \hat{\nu}_n,$ we have 
\begin{align}\label{eqn:6.2a}
    |\bar{\mu}(\Gamma_n\cap (f_n,\mathrm{d}f_n)^{-m_n}(A)) - \bar{\mu}(\Gamma_n)\bar{\mu}(A)|\leq \varepsilon\bar{\mu}(\Gamma_n)\bar{\mu}(A).
\end{align}
Then $(f,\mathrm{d}f)$ is weakly mixing on $\mathbb{P} \mathrm{T}M.$
\end{lemma}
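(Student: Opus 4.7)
The goal is to verify the equivalent characterization of weak mixing stated in the Introduction: there exists an increasing sequence along which $\bar{\mu}(B \cap (f,\mathrm{d}f)^{-m_n}(A)) \to \bar{\mu}(B)\bar{\mu}(A)$ for every measurable $A, B \subseteq \mathbb{P}\mathrm{T}M$. The plan is to first reduce this to the case where $A$ is a 3-dimensional cube (since these generate the Borel $\sigma$-algebra of $\mathbb{P}\mathrm{T}M$), and then, for such an $A$ and an arbitrary measurable $B$, approximate $B$ by a union of elements of $\hat{\nu}_n$ using the hypothesis $\hat{\nu}_n \to \varepsilon$, transfer the dynamics from $(f,\mathrm{d}f)$ to $(f_n,\mathrm{d}f_n)$, and finally apply the hypothesis \eqref{eqn:6.2a} atomically on each partition element and sum.

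More precisely, fix a 3-dimensional cube $A \subseteq \mathbb{P}\mathrm{T}M$, a measurable set $B$, and $\varepsilon \in (0,1]$. Choose a slightly shrunken cube $A^{-} \subset A$ and a slightly enlarged cube $A^{+} \supset A$ with $\bar{\mu}(A^{+} \setminus A^{-}) < \varepsilon \bar{\mu}(A)$ and $\mathrm{dist}(\partial A^{\pm}, \partial A) > 1/2^n$ for $n$ sufficiently large. Using $d_1(f_n^{m_n}, f^{m_n}) < 1/2^n$ (which, since $d_1$ controls first derivatives, bounds the distance between $(f,\mathrm{d}f)^{m_n}$ and $(f_n,\mathrm{d}f_n)^{m_n}$ on $\mathbb{P}\mathrm{T}M$), an analogue of Lemma~\ref{lem:8:8.2} on $\mathbb{P}\mathrm{T}M$ yields
\begin{align*}
\bar{\mu}\bigl(\Gamma \cap (f,\mathrm{d}f)^{-m_n}(A)\bigr) &\leq \bar{\mu}\bigl(\Gamma \cap (f_n,\mathrm{d}f_n)^{-m_n}(A^{+})\bigr),\\
\bar{\mu}\bigl(\Gamma \cap (f,\mathrm{d}f)^{-m_n}(A)\bigr) &\geq \bar{\mu}\bigl(\Gamma \cap (f_n,\mathrm{d}f_n)^{-m_n}(A^{-})\bigr),
\end{align*}
for every measurable set $\Gamma \subseteq \mathbb{P}\mathrm{T}M$ and $n$ large enough.

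Next, because $\hat{\nu}_n \to \varepsilon$, for $n$ large enough there is a union $B_n$ of elements of $\hat{\nu}_n$ with $\bar{\mu}(B \triangle B_n) < \varepsilon\bar{\mu}(A)$. Writing $B_n = \bigsqcup_i \Gamma_n^i$ with $\Gamma_n^i \in \hat{\nu}_n$ and applying hypothesis \eqref{eqn:6.2a} to the cubes $A^{\pm}$ (this is where we use that $N$ in \eqref{eqn:6.2a} does not depend on $\Gamma_n$), we sum over $i$ to obtain
\begin{equation*}
\bigl|\bar{\mu}\bigl(B_n \cap (f_n,\mathrm{d}f_n)^{-m_n}(A^{\pm})\bigr) - \bar{\mu}(B_n)\bar{\mu}(A^{\pm})\bigr| \leq \varepsilon\, \bar{\mu}(B_n)\bar{\mu}(A^{\pm})
\end{equation*}
for all $n \geq N$. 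Combining this with the sandwich inequalities above and the estimates $\bar{\mu}(B_n) = \bar{\mu}(B) + O(\varepsilon)$ and $\bar{\mu}(A^{\pm}) = \bar{\mu}(A) + O(\varepsilon)$ produces
\begin{equation*}
\bigl|\bar{\mu}\bigl(B \cap (f,\mathrm{d}f)^{-m_n}(A)\bigr) - \bar{\mu}(B)\bar{\mu}(A)\bigr| \leq C\varepsilon\, \bar{\mu}(B)\bar{\mu}(A) + C'\varepsilon
\end{equation*}
for absolute constants, which gives the desired limit. A standard approximation of an arbitrary measurable $A$ by finite unions of cubes then extends the convergence to all pairs of measurable sets, establishing weak mixing.

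The main obstacle is the transfer step: converting the $C^1$-closeness of $f_n^{m_n}$ to $f^{m_n}$ into a comparison between $(f,\mathrm{d}f)^{m_n}$ and $(f_n,\mathrm{d}f_n)^{m_n}$ on the three-dimensional space $\mathbb{P}\mathrm{T}M$, with quantitative control over how the projective fiber coordinate deforms. Once this pointwise $C^0$-closeness on $\mathbb{P}\mathrm{T}M$ is secured (via an analogue of Lemma~\ref{lem:8:8.2}, using that derivatives of $f_n^{m_n}$ and $f^{m_n}$ are uniformly close), the remaining steps are routine combinations of the partition approximation and the hypothesis \eqref{eqn:6.2a}.
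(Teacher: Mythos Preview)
Your proposal is correct and follows the standard approach underlying the cited Lemmas~6.1 and~6.2 of \cite{GKu15}: reduce to cubes $A$, sandwich $A$ between $A^-$ and $A^+$ to pass from $(f,\mathrm{d}f)^{m_n}$ to $(f_n,\mathrm{d}f_n)^{m_n}$ via the $C^1$-closeness hypothesis (which indeed controls the projectivized-derivative map on $\mathbb{P}\mathrm{T}M$), approximate $B$ by a $\hat{\nu}_n$-measurable set, and sum \eqref{eqn:6.2a} over the atoms. The paper does not spell this out but simply refers to \cite{GKu15}; your write-up is precisely the argument that reference encodes, adapted to the three-dimensional space $\mathbb{P}\mathrm{T}M$.
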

\begin{proof}
The proof follows directly from \cite[Lemma 6.1, 6.2]{GKu15} for dimension $m=3$.
\end{proof}
\subsection{Criterion for smooth weakly mixing diffeomorphisms}
Here, we prove the criterion for weak mixing on $\mathbb{P} \mathrm{T}M.$ 
 \begin{proposition}{(Criterion for weak mixing in smooth setting)} \label{prop:6.6.3a}
  Let $f_n=H_n\circ R_{\alpha_{n+1}}\circ H_n^{-1},$ defined by (\ref{eq:3a}) and (\ref{eq:3:3b}), such that $(f_n,\mathrm{d}f_n)$ converges to $(f,\mathrm{d}f)$ and satisfies { $d_1(f_n^{m_n}, f^{m_n})< \frac{1}{2^n}$}  for every $n\in \N$. Additionally we assume that the sequence $(H_n)_{n\in \N}$ satisfies condition \ref{item:P1}, and the map $g_n$ is as in Proposition \ref{lemma:3a}.
Consider a sequence of partial partitions $(\hat{\eta}_n)_{n\in \mathbb{N}}$, defined as in (\ref{eq:6:1a}), such that $\hat{\eta}_n \to \epsilon$. Additionally, let the partial partition $\hat{\nu}_n$ be defined as 
\begin{align}
\hat{\nu}_n= \left\{\Gamma_{n,j}= (H_{n-1},\mathrm{d} H_{n-1})\circ (g_n,\mathrm{d}g_n)(\hat{I}_{n,j})\ : \ \ \ \hat{I}_{n,j}\in \hat{\eta}_n\right\},
\end{align}
and suppose that $\hat{\nu}_n \to \epsilon.$ Suppose for a sequence $(m_n)_{n\in \N}$ and diffeomorphisms $\Phi_n \coloneqq \phi_n\circ R_{\alpha_{n+1}}^{m_n}\circ \phi_n^{-1}$ that $(\Phi_n,\mathrm{d}\Phi_n):\mathbb{P} \mathrm{T}M\to \mathbb{P} \mathrm{T}M$ is $(\gamma,\delta,\varepsilon_1,\varepsilon_2)$-distributing all partition elements of $\hat{\eta}_n$ with 
$\gamma < \frac{1}{k_n^3q_n}, \delta < \frac{50}{k_n^2}, \varepsilon_1, \varepsilon_2<\frac{1}{n}$. Then  $(f,\mathrm{d}f)= \lim_{n\to\infty}(f_n,\mathrm{d}f_n)$ is weakly mixing with respect to the measure $\overline{\mu}$ on $\mathbb{P} \mathrm{T}M$.
\end{proposition}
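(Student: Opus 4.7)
The plan is to reduce the proposition to Lemma~\ref{thm:6.12} applied to the partial partition $(\hat{\nu}_n)_{n\in\N}$. Since $\hat{\nu}_n\to\epsilon$ and the closeness $d_1(f_n^{m_n},f^{m_n})<2^{-n}$ are given by hypothesis, it suffices to verify the approximate multiplicativity~\eqref{eqn:6.2a} for every three-dimensional cube $A\subseteq\mathbb{P}\mathrm{T}M$ and every partition element $\Gamma_{n,j}=(H_{n-1},\mathrm{d}H_{n-1})\circ(g_n,\mathrm{d}g_n)(\hat{I}_{n,j})\in\hat{\nu}_n$, for $n$ sufficiently large. The starting algebraic observation is
\[f_n^{m_n}=H_{n-1}\circ g_n\circ\Phi_n\circ g_n^{-1}\circ H_{n-1}^{-1},\]
which follows from $h_n=g_n\circ\phi_n$, $\Phi_n=\phi_n\circ R_{\alpha_{n+1}}^{m_n}\circ\phi_n^{-1}$, and $H_n=H_{n-1}\circ h_n$; this converts the desired estimate into a distributional statement about $(\Phi_n,\mathrm{d}\Phi_n)$ composed with $(g_n,\mathrm{d}g_n)$ acting on the $(H_{n-1},\mathrm{d}H_{n-1})$-preimage of $A$.

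Second, I would approximate $A$, up to arbitrarily small $\bar{\mu}$-error, by a disjoint union of sets $C_{n,k}=(H_{n-1},\mathrm{d}H_{n-1})(\tilde{S}_{n,k})$, where $\tilde{S}_{n,k}=(S_n\cap\widetilde{G}_n)\times T_k$ with $S_n$ a square of side length of order $k_n^{-1}$ lying in $\T^1\times[\delta,1-\delta]$ and $T_k=[k/k_n,(k+1)/k_n]$. Condition~\ref{item:P1} provides the diameter control that makes such a cover possible for large $n$, and Remark~\ref{lem:corolarly} ensures that Lemma~\ref{lem: 6.6} still applies at the scales arising. It then suffices to establish the target inequality when $A$ is a single such $C_{n,k}$.

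Third, I would carry out the fiberwise estimate. For each $l\in\{0,\dots,k_n-1\}$, set $\mathcal{A}_{n,l,j}=(H_{n-1},\mathrm{d}H_{n-1})\circ(g_n,\mathrm{d}g_n)(\tilde{I}_{n,l,j})$ so that $\Gamma_{n,j}=\bigcup_l\mathcal{A}_{n,l,j}$. Lemma~\ref{lem: 6.6} supplies, for the unique $l^\ast=l^\ast(k,j)$ satisfying $k\equiv l^\ast+j\pmod{k_n}$, the bound
\[\bigl|\mu\bigl(\pi_M(\tilde{I}_{n,l^\ast,j}\cap(\Phi_n,\mathrm{d}\Phi_n)^{-1}\circ(g_n,\mathrm{d}g_n)^{-1}(\tilde{S}_{n,k}))\bigr)-\mu(\tilde{I}_{n,l^\ast})\mu(\tilde{S}_n)\bigr|\leq\tfrac{68}{n}\mu(\tilde{I}_{n,l^\ast})\mu(\tilde{S}_n),\]
whereas the tangent-direction bookkeeping in Definition~\ref{def:6.2a} forces the analogous intersections for the other $k_n-1$ values of $l$ to be essentially empty. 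Lemma~\ref{lemm~measurepreservation} then pushes these base-manifold estimates up to $\bar{\mu}$-estimates on $\Gamma_{n,j}\cap(f_n,\mathrm{d}f_n)^{-m_n}(C_{n,k})$ and identifies both $\bar{\mu}(C_{n,k})=\tfrac{1}{k_n}\mu(\tilde{S}_n)$ and $\bar{\mu}(\Gamma_{n,j})\approx\mu(\tilde{I}_{n,l^\ast})$ (using that by~\ref{item:p1} the segments $\tilde{I}_{n,l}$ all have equal measure). Combining these three ingredients produces $\bar{\mu}(\Gamma_{n,j}\cap(f_n,\mathrm{d}f_n)^{-m_n}(C_{n,k}))\approx\bar{\mu}(\Gamma_{n,j})\bar{\mu}(C_{n,k})$ with error $o(1)$ as $n\to\infty$, realising the chain~\eqref{eqn:Key_idea1}--\eqref{eqn:Key_idea2}.

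The main obstacle I anticipate is the careful propagation of error terms through three independent layers of approximation: the good-domain replacement $\tilde{I}_{n,l}\rightsquigarrow\tilde{I}_{n,l}\cap\widetilde{G}_n$, governed by the rapidly-decaying bound~\eqref{eqn:5:5.16} under condition~\ref{item:P0}; the $\tfrac{68}{n}$-error in the base-manifold distribution supplied by Lemma~\ref{lem: 6.6}; and the $\bar{\mu}$-covering error incurred when approximating the generic cube $A$ by sets $C_{n,k}$. The uniformity in $A$ of the resulting estimate---which is what Lemma~\ref{thm:6.12} requires---follows because each $C_{n,k}$ is built from a fixed scale $k_n^{-1}$ independent of $A$, so by choosing $n$ sufficiently large relative to the side length of $A$ and to the required tolerance $\varepsilon$, all three error contributions can be forced below $\varepsilon\bar{\mu}(\Gamma_{n,j})\bar{\mu}(A)$ simultaneously.
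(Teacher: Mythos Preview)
Your proposal is correct and follows essentially the same strategy as the paper's proof: reduce to Lemma~\ref{thm:6.12}, approximate a generic cube $A$ from inside and outside by unions of sets $C_{n,k}=(H_{n-1},\mathrm{d}H_{n-1})(\tilde S_{n,k})$ using condition~\ref{item:P1} for diameter control, then feed Lemma~\ref{lem: 6.6} together with Lemma~\ref{lemm~measurepreservation} and the equal-measure property~\ref{item:p1} into the chain of estimates~\eqref{eqn:Key_idea1}--\eqref{eqn:Key_idea2}. Your remark that the intersections for $l\neq l^\ast$ are essentially empty is precisely what the paper encodes as the identity $\Gamma_{n,j}\cap(f_n,\mathrm{d}f_n)^{-m_n}(C_{n,k})=\Gamma_{n,l,j}\cap(f_n,\mathrm{d}f_n)^{-m_n}(C_{n,k})$.
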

\begin{proof}
Fix $\epsilon>0$ and $\hat{I}_{n,j} = \cup_{l=0}^{k_n-1}\tilde{I}_{n,l,j}\in \hat{\eta}_n.$ Let $S_n$ be a square of side length equal to $\frac{1}{k_n}$ contained in $\T^1 \times [\delta,1-\delta]$. Using the good domain $\widetilde{G}_n$ from (\ref{eqn:future_domain}), we define $\tilde{S}_n =S_n \cap \widetilde{G}_n$. 
For $k\in \{0,\dots , k_n-1\}$, let $\tilde{S}_{n,k}= \tilde{S}_n\times T_k$, where $T_k =\left[\frac{k}{k_n},\frac{k+1}{k_n}\right]$. Consider $C_{n,k}= (H_{n-1},\mathrm{d} H_{n-1})(\tilde{S}_{n,k})$ and $\Gamma_{n,l,j} = (H_{n-1},\mathrm{d} H_{n-1})\circ (g_n,\mathrm{d}g_n)(\tilde{I}_{n,l,j})$. We apply  Lemma~\ref{lemm~measurepreservation} and Lemma~\ref{lem: 6.6} to conclude that for any $ j, k\in \{0,\ldots,k_n-1\},$ there exists $l\in \{0,1,\dots,k_n-1\}$ satisfying $k\equiv l+j\mod k_n,$ and we have  
\begin{align}
    &\bar{\mu}(\Gamma_{n,l,j} \cap (f_n,\mathrm{d}f_n)^{-m_n}(C_{n,k})) \nonumber \\
    \leq & \frac{1}{k_n} {\mu}(\pi_M(\tilde{I}_{n,l,j}\cap (\Phi_n,\mathrm{d}\Phi_n)^{-1}\circ(g_n,\mathrm{d}g_n)^{-1}(\tilde{S}_{n,k}))) +  \frac{36(k_n-1)}{k_n^5}\mu(\tilde{I}_{n,l})\nonumber\\
    \leq & \left(1+\frac{68}{n}\right)\frac{1}{k_n} {\mu}(\tilde{I}_{n,l})\mu(\tilde{S}_n) +  \frac{36(k_n-1)}{k_n^5}\frac{1}{\bar{\mu}(C_{n,k})} \mu(\tilde{I}_{n,l})\bar{\mu}(C_{n,k})\nonumber\\
    \leq& \left(1+\frac{68}{n}\right){\mu}(\tilde{I}_{n,l})\bar{\mu}(C_{n,k}) + \frac{2}{n}\mu(\tilde{I}_{n,l})\bar{\mu}(C_{n,k}) \leq \left(1+\frac{70}{n}\right){\mu}(\tilde{I}_{n,l})\bar{\mu}(C_{n,k})\nonumber.
\end{align}
Here, we used the relation: $\mu(C_{n,k}) = \frac{\mu(\tilde{S}_n)}{k_n},$ and $\mu(\tilde{S}_n)\geq (1- \frac{36}{k_n^4})\mu(S_n),$ which implies for $n>4,$ $\frac{1}{\mu(\tilde{S}_n)}\leq (1-\frac{36}{k_n^4})^{-1}\frac{1}{\mu(S_n)} \leq \frac{2}{\mu(S_n)} \leq 2 k_n^{2}$. Similarly, we conclude that $\bar{\mu}(\Gamma_{n,l,j} \cap (f_n,\mathrm{d}f_n)^{-m_n}(C_{n,k}))$ $ \geq \left(1-\frac{70}{n}\right){\mu}(\tilde{I}_{n,l})\bar{\mu}(C_{n,k}).$ Altogether, we get 
\begin{align}\label{eqn:5:5:28}
    |\bar{\mu}(\Gamma_{n,l,j} &\cap (f_n,\mathrm{d}f_n)^{-m_n}(C_{n,k}))- \mu(\tilde{I}_{n,l})\bar{\mu}(C_{n,k})| \leq \frac{70}{n}{\mu}(\tilde{I}_{n,l})\bar{\mu}(C_{n,k}). 
\end{align}
Furthermore, consider $\Gamma_{n,j}= \bigcup_{l=0}^{k_n-1}\Gamma_{n,l,j}.$ With $\bar\mu(\Gamma_{n,j})=\sum_{l=0}^{k_n-1} \bar{\mu}(\Gamma_{n,l,j}),$ and considering $\tilde{c}_{n,l}$ with  ${S}_n' = \T^2$ in Lemma~\ref{lemm~measurepreservation},  we conclude that 
\begin{align}\label{eqn:5:5:29}
|\bar{\mu}(\Gamma_{n,j})- \mu(\tilde{I}_{n,l})|\leq \frac{36}{k_n^4}\mu(\tilde{I}_{n,l}).
\end{align}
Additionally, using the fact that for any $ k,j\in \{0,\ldots,k_n-1\}$ there exists a unique $l\in \{0,1,\dots,k_n-1\}$ satisfying $k\equiv l+j\mod k_n,$ we have
\begin{align}\label{eqn:Distri_Gamma}
    \Gamma_{n,j}\cap (f_n,\mathrm{d}f_n)^{-m_n}(C_{n,k}) = \Gamma_{n,l,j}\cap (f_n,\mathrm{d}f_n)^{-m_n}(C_{n,k}). 
    \end{align} 
Since $C_{n,k}\subset \mathbb{P} \mathrm{T}M$, with $C_{n,k} = (H_{n-1},\mathrm{d} H_{n-1})(\tilde{S}_{n,k})$ of $\text{diam}(\tilde{S}_{n,k})\leq \frac{3}{k_n},$ we can use condition \ref{item:P1} and conclude that
$\text{diam}(C_{n,k})\leq \text{diam}((H_{n-1},\mathrm{d} H_{n-1})(\tilde{S}_{n,k}))
< \frac{1}{n^2},$
that is, $\text{diam}(C_{n,k})\rightarrow 0$ as $n\rightarrow \infty.$

Let $\Meng{S^i_n}{i\in \Lambda_n}$ with a finite index set $\Lambda_n$ be a collection of squares $S^i_n$ of side length $\frac{1}{k_n}$ contained in $\T^1 \times [\delta,1-\delta]$ such that $\mu(\cup_{i \in \Lambda_n} S_n^i )\geq 1-4 \delta$. Let $\tilde{S}_{n}^{i} = S_{n}^{i}\cap \widetilde{G}_n$ and $\tilde{S}_{n,k}^{i} = \tilde{S}_n \times T_k$ with $k\in \{0,\ldots,k_n-1\}$. Considering sets
$C_{n,k}^{i} = (H_{n-1},\mathrm{d} H_{n-1})(\tilde{S}_{n,k}^{i})$ and index set $\tilde{\Lambda}_n = \Meng{(i,k)}{i\in \Lambda_n, k\in \{0,\ldots,k_n-1\}}$, we can conclude that
\begin{align*}
\bar{\mu}&\left(\bigcup_{(i,k)\in \tilde{\Lambda}_n}C_{n,k}^{i}\right)= \sum_{(i,k)\in \tilde{\Lambda}_n}\bar{\mu}(C_{n,k}^{i}) = \sum_{i\in \Lambda_n} \sum_{k=0}^{k_n-1} \bar{\mu}(C_{n,k}^{i}) =  \sum_{i\in \Lambda_n} \sum_{k=0}^{k_n-1}\frac{1}{k_n}\mu(\tilde{S}_n^{i}) \\
&\geq  \sum_{i\in \Lambda_n} \left(1-\frac{36}{k_n^4}\right)\mu(S_n^{i}) \geq \left(1-\frac{36}{k_n^4}\right)(1-4\delta) \geq \left(1-\frac{36}{k_n^4}\right)\left(1-\frac{200}{k_n^2}\right) \geq \left(1-\frac{236}{n^4}\right).
\end{align*}
Thus, $\bar{\mu}\left(\cup_{(i,k)\in \tilde{\Lambda}_n}C_{n,k}^{i}\right) \to 1$ as $n \to \infty.$
Therefore, any cube $A\subset \mathbb{P} \mathrm{T}M$ can be approximated by a finite disjoint union of sets of the form $C_{n,k}$: For $n$ sufficiently large there are sets $A_1= \bigcup_{(i,k)\in \mathcal{C}_n^1}C_{n,k}^i$ and $A_2= \bigcup_{(i,k)\in \mathcal{C}_n^2}C_{n,k}^i$ with finite sets $\mathcal{C}_n^1$ and $\mathcal{C}_n^2$ of indices such that $A_1\subseteq A \subseteq A_2$ and $|\bar{\mu}(A)- \bar{\mu}(A_i)|\leq \frac{\epsilon}{3}\bar{\mu}(A)$ for $i =1,2.$  Additionally, we choose $n$ large enough such that  $ \frac{142}{n}< \frac{\epsilon}{3}$ holds. It follows, using estimates (\ref{eqn:5:5:28}), (\ref{eqn:5:5:29}), (\ref{eqn:Distri_Gamma}), and for $n>4, \left(1-\frac{36}{k_n^4}\right)^{-1} < 2$ that:
\begin{align*}
&\bar{\mu}(\Gamma_{n,j}\cap (f_n,\mathrm{d}f_n)^{-m_n}(A))- \bar{\mu}(\Gamma_{n,j})\bar{\mu}(A) \nonumber \\
\leq & \bar{\mu}(\Gamma_{n,j}\cap (f_n,\mathrm{d}f_n)^{-m_n}(A_2))- \bar{\mu}(\Gamma_{n,j})\bar{\mu}(A_2) + \bar{\mu}(\Gamma_{n,j})(\bar{\mu}(A_2)- \bar{\mu}(A)) \nonumber \\
\leq & \bar{\mu}(\cup_{l=0}^{k_n-1}\Gamma_{n,l,j}\cap (f_n,\mathrm{d}f_n)^{-m_n}(\cup_{(i,k)\in \mathcal{C}_n^2}C_{n,k}^i))- \bar{\mu}(\Gamma_{n,j})\bar{\mu}(\cup_{(i,k)\in \mathcal{C}_n^2} C_{n,k}^i) + \frac{\epsilon}{3}\bar{\mu}(\Gamma_{n,j})\bar{\mu}(A)  \nonumber \\
 \leq & \sum_{(i,k)\in \mathcal{C}_n^2}\left( \bar{\mu}(\Gamma_{n,l,j}\cap (f_n,\mathrm{d}f_n)^{-m_n}(C_{n,k}^i))- \mu(\tilde{I}_{n,l})\bar{\mu}(C_{n,k}^i) \right) +  \sum_{(i,k)\in \mathcal{C}_n^2}\bar{\mu}(C_{n,k}^i)(\mu(\tilde{I}_{n,l}) - \bar{\mu}(\Gamma_{n,j})) \\
& \qquad \qquad \qquad \qquad \qquad \qquad \qquad \qquad  \qquad \qquad \qquad \qquad \quad   + \frac{\epsilon}{3}\bar{\mu}(\Gamma_{n,j})\bar{\mu}(A)  \nonumber \\
 \leq & \sum_{(i,k)\in \mathcal{C}_n^2} \left(\frac{70}{n}+ \frac{36}{k_n^4}\right){\mu}(\tilde{I}_{n,l})\bar{\mu}(C_{n,k}^i) + \frac{\epsilon}{3}\bar{\mu}(\Gamma_{n,j})\bar{\mu}(A) \nonumber \\
 \leq & \sum_{(i,k)\in \mathcal{C}_n^2} \frac{71}{n}\left(1-\frac{36}{k_n^4}\right)^{-1}\bar{\mu}(\Gamma_{n,j})\bar{\mu}(C_{n,k}^i) + \frac{\epsilon}{3}\bar{\mu}(\Gamma_{n,j})\bar{\mu}(A) \nonumber \\
  \leq & \frac{142}{n} \bar{\mu}(\Gamma_{n,j})\bar{\mu}(A) + \frac{142}{n}\bar{\mu}(\Gamma_{n,j})(\bar{\mu}(A_2)- \bar{\mu}(A))+\frac{\epsilon}{3}\bar{\mu}(\Gamma_{n,j})\bar{\mu}(A) \nonumber \\
 \leq &  \frac{\epsilon}{3}\bar{\mu}(\Gamma_{n,j})\bar{\mu}(A) + \frac{\epsilon}{3}\bar{\mu}(\Gamma_{n,j})\bar{\mu}(A) +  \frac{\epsilon}{3}\bar{\mu}(\Gamma_{n,j})\bar{\mu}(A)\leq \epsilon\bar{\mu}(\Gamma_{n,j})\bar{\mu}(A). \label{eqn:weak_mixing_equation}
\end{align*}
Analogously, we estimate that $\bar{\mu}(\Gamma_{n,j}\cap (f_n,\mathrm{d}f_n)^{-m_n}(A))- \bar{\mu}(\Gamma_{n,j})\bar{\mu}(A) > -\epsilon\bar{\mu}(\Gamma_{n,j})\bar{\mu}(A).$ Both estimates together yield equation (\ref{eqn:6.2a}) required for Lemma \ref{thm:6.12}. This implies that $(f,\mathrm{d}f)$ is weakly mixing with respect to $\bar{\mu}$ on the $\mathbb{P} \mathrm{T}M.$
\end{proof}

\subsection{Criterion for analytic weakly mixing diffeomorphisms}
We build on our criterion for weak mixing deduced in the previous subsections and show that the weak mixing property still holds for real-analytic AbC diffeomorphisms obtained by utilizing sufficiently close analytic approximations of the conjugation maps from the smooth constructions. We start by showing that such approximations still give good distribution properties in the sense of Definition~\ref{def:6.2a}.
\begin{lemma} \label{lem:6.6.4c}
Consider $\phi_n \in \textup{Diff}^{\infty}(\T^2,\mu)$ and $\hat{\phi}_n\in \textup{Diff}_{\infty}^{\omega}(\T^2,\mu)$ satisfying
$d_{2}(\phi_n,\hat{\phi}_n\restriction_{\T^2}) < \epsilon_n$, where $\epsilon_n>0$ and $\mathfrak{d}_n>0$ are given by (\ref{eqn:4:4.2}). Denote $\Phi_n= \phi_n \circ R_{\alpha_{n+1}}^{m_n}\circ \phi_n^{-1}$ and $\hat{\Phi}_n= \hat{\phi}_n \circ R_{\alpha_{n+1}}^{m_n}\circ \hat{\phi}_n^{-1}.$ If the map $(\Phi_n,\mathrm{d}\Phi_n)$ is $(\gamma,\delta,\varepsilon_1,\varepsilon_2)$-distributing the elements $I\in \hat{\eta}_n$ in the sense of Definition~\ref{def:6.2a}, then $(\hat{\Phi}_n,\mathrm{d}\hat{\Phi}_n)$ is $(\gamma',\delta',\varepsilon_1',\varepsilon_2')$-distributing the elements $I\in \hat{\eta}_n$ with $\gamma' = \gamma + \frac{1}{2^n}, \ \delta' = \delta + \frac{1}{2^n}, \ \varepsilon_1' = 2\cdot \varepsilon_1 + \frac{3}{2^n},$ and $\varepsilon_2' = 2\cdot \varepsilon_2 + \frac{3}{2^n}.$ 
\end{lemma}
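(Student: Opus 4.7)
The strategy is to first establish $C^1$-type closeness of the projectivized derivative extensions $(\Phi_n, \mathrm{d}\Phi_n)$ and $(\hat{\Phi}_n, \mathrm{d}\hat{\Phi}_n)$ on $\mathbb{P}\mathrm{T}M$, then verify each of the four distribution properties of Definition \ref{def:6.2a} by absorbing the resulting perturbations into the slightly weakened parameters $\gamma', \delta', \varepsilon_1', \varepsilon_2'$.

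First, I would use the hypothesis $d_{2}(\phi_n,\hat{\phi}_n\restriction_{\T^2}) < \epsilon_n$ together with the fact that $R_{\alpha_{n+1}}^{m_n}$ acts as an isometry (so it neither amplifies distances nor derivatives) and a composition estimate in the spirit of Lemma~\ref{le:2a} to conclude
\[
\sup_{x\in \T^2} d\bigl((\Phi_n,\mathrm{d}\Phi_n)(x), (\hat{\Phi}_n\restriction_{\T^2},\mathrm{d}\hat{\Phi}_n)(x)\bigr) \leq \frac{1}{2^{n+2}}.
\]
The definition of $\epsilon_n$ in \eqref{eqn:4:4.2}, whose denominator contains the factor $(2\vertiii{\phi_n}_{2}+1)$, is tailored precisely so that even after applying the chain rule to the triple composition $\phi_n \circ R_{\alpha_{n+1}}^{m_n} \circ \phi_n^{-1}$ the accumulated amplification leaves the final perturbation comfortably below $2^{-n}$. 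Standard inverse function estimates give the corresponding $C^{1}$-closeness between $\phi_n^{-1}$ and $\hat{\phi}_n^{-1}$ needed for the right-hand factor in the composition.

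Next, I would verify each of the four distribution properties in turn. For condition~(2) of Definition~\ref{def:6.2a}, the $C^{0}$-estimate above yields $\hat{\Phi}_n(\tilde{I}_{n,l}) \subseteq [c_l - 2^{-n-1}, c_l + \gamma + 2^{-n-1}] \times [0,1]$, which is contained in a strip of width $\gamma' = \gamma + 2^{-n}$. For condition~(1), shrinking each $J_l$ by $2^{-n-1}$ on both sides produces $J_l' \subseteq \pi_r(\hat{\Phi}_n(\tilde{I}_{n,l}))$ of Lebesgue measure at least $\lambda(J_l) - 2^{-n} \geq 1-\delta'$. For the measure distribution conditions~(3) and~(4), given $\tilde{J} \subseteq J_l'$, I set $A = \T^1 \times \tilde{J}$ (respectively $A = \T^1 \times \tilde{J} \times T_k$ in the tangent-direction case) and pick enlarged and contracted sets $B^{\pm}$ of $A$ in the ambient space with $\mathrm{dist}(\partial A, \partial B^{\pm}) > 2^{-n-1}$. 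Lemma~\ref{lem:8:8.2}, applied to $\Phi_n$ versus $\hat{\Phi}_n$ (respectively to their lifts to $\mathbb{P}\mathrm{T}M$), then yields the sandwich
\[
\mu\bigl(\tilde{I}_{n,l} \cap \Phi_n^{-1}(B^{-})\bigr) \leq \mu\bigl(\tilde{I}_{n,l} \cap \hat{\Phi}_n^{-1}(A)\bigr) \leq \mu\bigl(\tilde{I}_{n,l} \cap \Phi_n^{-1}(B^{+})\bigr).
\]
Combining this with the $(\gamma,\delta,\varepsilon_1,\varepsilon_2)$-distribution of $\Phi_n$ applied to $B^{\pm}$, and paying the small cost $\lambda(B^{\pm} \triangle A)/\lambda(\tilde{J}) \leq 2^{-n}$ introduced by the enlargement/contraction, gives the desired bound with $\varepsilon_1' = 2\varepsilon_1 + 3\cdot 2^{-n}$; the factor $2$ comes from combining the original $\varepsilon_1$-distribution inequality with an additional $\varepsilon_1$-term arising when the enlarged set $B^{+}$ is expressed in terms of $A$.

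The main obstacle is condition~(4), the tangent-direction distribution, since here we need fiberwise closeness between $\mathrm{d}\Phi_n$ and $\mathrm{d}\hat{\Phi}_n$ on $\mathbb{P}\mathrm{T}M$. This rests on $C^{2}$-proximity of $\phi_n$ and $\hat{\phi}_n$ propagating correctly through composition with $\phi_n^{-1}$, whose derivative can be as large as $\vertiii{\phi_n}_{2}$. The scaling in \eqref{eqn:4:4.2}, with $\|DH_{n-1}\|_{0}^2 (2\vertiii{\phi_n}_{2}+1)$ in the denominator of $\epsilon_n$, is designed precisely so that the chain rule applied to the triple composition keeps the perturbation of $\mathrm{d}\Phi_n$ below $2^{-n}$; once this is secured, condition~(4) follows from the same Lemma~\ref{lem:8:8.2}-based sandwich argument as condition~(3), executed in the three-dimensional ambient space $\mathbb{P}\mathrm{T}M$ rather than on $M$.
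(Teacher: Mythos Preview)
Your proposal is correct and follows essentially the same route as the paper's own proof: both first convert $d_2(\phi_n,\hat\phi_n)<\epsilon_n$ into a $C^1$-closeness bound $d_1(\Phi_n,\hat\Phi_n)\leq\mathfrak{d}_n$ via the triangle inequality and mean value theorem (exploiting the factor $(2\vertiii{\phi_n}_2+1)$ in the denominator of $\epsilon_n$), then verify conditions (1)--(4) of Definition~\ref{def:6.2a} by the same Lemma~\ref{lem:8:8.2} sandwich argument with inner/outer sets $\tilde J_1\subset\tilde J\subset\tilde J_2$. The only cosmetic difference is that the paper defines $J_l'=J_l\cap\pi_r(\hat\Phi_n(\tilde I_{n,l}))$ rather than shrinking $J_l$ symmetrically, and handles condition~(4) with a one-word ``Analogously'', whereas you spell out that the fiberwise closeness on $\mathbb{P}\mathrm{T}M$ follows from the $C^1$-bound and then run the sandwich in the three-dimensional ambient space.
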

\begin{proof}
By using the triangle inequality and applying the mean value theorem, we can deduce the following proximity statement:
\begin{align}    
 &d_1(\Phi_n, \hat{\Phi}_n\restriction_{\T^2}) = d_1({\phi}_n \circ R_{\alpha_{n+1}}^{m_n}\circ {\phi}_n^{-1},  (\hat{\phi}_n \circ R_{\alpha_{n+1}}^{m_n}\circ \hat{\phi}_n^{-1})\restriction_{\T^2})\nonumber \\
 \leq &  d_1({\phi}_n \circ R_{\alpha_{n+1}}^{m_n}\circ {\phi}_n^{-1},  {\phi}_n \circ R_{\alpha_{n+1}}^{m_n}\circ (\hat{\phi}_n^{-1})\restriction_{\T^2}) +  d_1({\phi}_n \circ R_{\alpha_{n+1}}^{m_n}\circ (\hat{\phi}_n^{-1})\restriction_{\T^2},  (\hat{\phi}_n \circ R_{\alpha_{n+1}}^{m_n}\circ \hat{\phi}_n^{-1})\restriction_{\T^2})\nonumber \\
 \leq  & \vertiii{\phi_n}_{2}\cdot d_1(\phi_n, \hat{\phi}_n\restriction_{\T^2})+ \max(\vertiii{\phi_n}_1,\vertiii{\hat{\phi}_n}_1)\cdot d_1(\phi_n, \hat{\phi}_n\restriction_{\T^2}) \nonumber\\
 \leq & (2\vertiii{\phi_n}_{2}+\epsilon_n)\cdot d_1(\phi_n, \hat{\phi}_n\restriction_{\T^2})  \leq (2\vertiii{\phi_n}_{2}+1)\cdot \epsilon_n \leq \mathfrak{d}_n.
\end{align}
With the definition of $(\Phi_n, \mathrm{d}\Phi_n)$ being $(\gamma, \delta, \varepsilon_1, \varepsilon_2)$-distributing, it satisfies four conditions: The first two conditions show that ${\Phi}_n(\tilde{I}_{n,l})$ is contained within $[c_l, c_l + \gamma] \times \T^1$ for some constant $c_l\in \T^1$ and that
there exists a set $J_l$ such that $J_l \subseteq \pi_{r}(\Phi_n (\tilde{I}_{n,l}))$ with $1 - \delta \leq \lambda(J_l) \leq 1$ for all $l \in \{0, \ldots, k_n-1\}$.  We denote $J_l' = J_l \cap \pi_{r}(\hat{\Phi}_n(\tilde{I}_{n,l}))$, which satisfies $\lambda(J_l') > 1 - (\delta + 2\cdot \mathfrak{d}_n).$  With $\mathfrak{d}_n < \frac{1}{2^{n+1}},$ this allows us to choose  $\gamma' = \gamma + \frac{1}{2^n}$ and $\delta' = \delta + \frac{1}{2^n}.$

For any subset $\tilde{J}\subset J_l'$, we consider two subsets, $\tilde{J}_1$ and $\tilde{J}_2,$ inside $J_l$ such that $\tilde{J}_1\subset \tilde{J} \subset \tilde{J}_2$. Additionally, we require $\lambda(\tilde{J}_i \triangle \tilde{J})\leq \frac{1}{2^n}\cdot \lambda(\tilde{J})$ for $i=1,2$, and for sufficiently large $n$ that $\text{dist}(\partial \tilde{J}_i, \partial \tilde{J})> \mathfrak{d}_n$ for $i=1,2$.

Given the other conditions in the definition of $(\Phi_n, \mathrm{d}\Phi_n)$ as $(\gamma, \delta, \varepsilon_1, \varepsilon_2)$ distributing, we can conclude that for any $\tilde{J}_i \subset J_{l}$ 
for $i=1,2$, and for all $l \in \{0,1,\ldots, k_n-1\}$ 
\begin{align}
\lvert {\mu}(\tilde{I}_{n,l}\cap \Phi_n^{-1}(\T^1\times \tilde{J}_i)\lambda(J_{l})) - \lambda(\tilde{J}_i){\mu}(\tilde{I}_{n,l}) \rvert &\leq \varepsilon_1  \lambda(\tilde{J}_i)\mu(\tilde{I}_{n,l}) \label{eqn:8:8.2a}
\end{align}
and for any $j,k\in \{0,1,\ldots, k_n-1\}$, there exists a $l\in \{0,1,\ldots, k_n-1\}$ such that $k \equiv l+j \mod{k_n},$ for $ \tilde{J}_i \subset J_l'\subseteq J_l,$ and $T_k= \left[\frac{k}{k_n},\frac{k+1}{k_n}\right]$ we have
\begin{align}
\left\lvert \mu\left(\pi_M\left(\tilde{I}_{n,l,j}\cap (\Phi_n,\mathrm{d}\Phi_n)^{-1}(\T^1\times \tilde{J}_i \times T_k)\right)\right)\lambda(J_l) - \lambda(\tilde{J}_i) \mu(\tilde{I}_{n,l})\right\rvert  &\leq \varepsilon_2 \lambda(\tilde{J}_i) \mu(\tilde{I}_{n,l}).\label{eqn:8:8.2b} 
\end{align}
Let us denote the following subsets of $\T^2$ as $A_1 = \T^1 \times \tilde{J}_1$, $A = \T^1 \times \tilde{J}$, and $A_2 = \T^1 \times \tilde{J}_2$, which satisfy the following conditions: $A_1\subset A \subset A_2$, $\mu(A_i \triangle A)\leq \mu(\tilde{J}_i \triangle \tilde{J}) \leq\frac{1}{2^n} \cdot \mu(A),$ and $\text{dist}(\partial A_i, \partial A)> \text{dist}(\partial \tilde{J}_i, \partial \tilde{J})> \mathfrak{d}_n$ for $i=1,2$. 
With Lemma \ref{lem:8:8.2}, we can establish the following relations: if $I \in \Phi_n^{-1}(A_1)$, then $I \in  \hat{\Phi}_n^{-1}(A)$, and if $I\in \hat{\Phi}_n^{-1}(A)$, then $I\in \Phi_n^{-1}(A_2)$. This implies
\begin{align*}
\mu(\tilde{I}_{n,l}\cap \Phi_n^{-1}(A_1))\lambda({J}_l')  &\leq \mu(\tilde{I}_{n,l}\cap \hat{\Phi}_n^{-1}(A))\lambda({J}_l') \leq \mu(\tilde{I}_{n,l}\cap \Phi_n^{-1}(A_2))\lambda(J_l').
\end{align*}
Altogether with equation (\ref{eqn:8:8.2a}) and relation $|\lambda(J_l')-\lambda(J_l)|\leq 2 \mathfrak{d}_n\lambda(J_l)$ it holds:  
\begin{align}
    (1-2 \mathfrak{d}_n)(1-\varepsilon_1)\mu(\tilde{I}_{n,l})\lambda(\tilde{J}_1) &\leq \mu(\tilde{I}_{n,l}\cap \hat{\Phi}_n^{-1}(A)) \lambda(J_{l}') \leq (1+ 2\mathfrak{d}_n)(1+\varepsilon_1)\mu(\tilde{I}_{n,l})\lambda(\tilde{J}_2).
    \label{eqn:8:8.2c}
\end{align}
Furthermore, we can obtain the following estimates with $\mathfrak{d}_n \leq \frac{1}{2^{n+1}}$:
\begin{align}
&\mu(\tilde{I}_{n,l}\cap \hat{\Phi}_n^{-1}( A))\lambda(J_{l}')  -\mu(\tilde{I}_{n,l})\lambda(\tilde{J}) \nonumber\\
&\leq 
 (\varepsilon_1+ 2\mathfrak{d}_n(1+\varepsilon_1))\mu(\tilde{I}_{n,l})\lambda (\tilde{J}) + (1+2\mathfrak{d}_n)(1+\varepsilon_1)\mu(\tilde{I}_{n,l})(\lambda(\tilde{J}_2)- \lambda(\tilde{J})) \nonumber\\
   &\leq (\varepsilon_1+ 2\mathfrak{d}_n(1+\varepsilon_1))\mu(\tilde{I}_{n,l})\lambda (\tilde{J}) + (1+2\mathfrak{d}_n)(1+\varepsilon_1)\mu(\tilde{I}_{n,l})\lambda(\tilde{J}_2\triangle\tilde{J}) \nonumber\\
   &\leq \left(2\cdot\varepsilon_1+ \frac{3}{2^n}\right)\mu(\tilde{I}_{n,l})\lambda (\tilde{J}). \nonumber
\end{align}
Similarly, we deduce the other side of inequality using (\ref{eqn:8:8.2c}), and altogether, we obtain that  
$$\lvert \mu(\tilde{I}_{n,l}\cap \hat{\Phi}_n^{-1}(\T^1\times \tilde{J}))\lambda(J_{l}')  -\mu(\tilde{I}_{n,l})\lambda(\tilde{J}) \rvert \leq \left(2\cdot\varepsilon_1+ \frac{3}{2^n}\right)\mu(\tilde{I}_{n,l})\lambda (\tilde{J}).$$
Analogously, we deduce using the estimate (\ref{eqn:8:8.2b}) that for any pair $j,k\in \{0,1,\ldots, k_n-1\}$, there exists $l\in \{0,1,\ldots, k_n-1\}$ such that $k \equiv l+j \mod{k_n},$ and for $\tilde{J} \subset J_l',$ $T_k= \left[\frac{k}{k_n},\frac{k+1}{k_n}\right]$ we have
\begin{align}
\left\lvert\mu\left(\pi_M\left(\tilde{I}_{n,l,j}\cap (\hat{\Phi}_n,\mathrm{d} \hat{\Phi}_n)^{-1}(\T^1\times \tilde{J} \times T_k)\right)\right) \lambda(J_l') - \mu(\tilde{I}_{n,l})\lambda(\tilde{J})\right\rvert \leq \left(2\cdot\varepsilon_2+ \frac{3}{2^n}\right)\mu(\tilde{I}_{n,l})\lambda(\tilde{J}). \nonumber  
    \end{align}
In summary, this yields that $(\hat{\Phi}_n,\mathrm{d} \hat{\Phi}_n)$ is $(\gamma',\delta', \varepsilon_1', \varepsilon_2')$-distributing $\hat{I}_{n,j}\in \hat{\eta}_n.$
\end{proof}
\begin{lemma}\label{lem:6.4d} Let $g_n$ be as in Proposition {\ref{lemma:3a}} and satisfy the property of Lemma \ref{lem: 6.6} with the map  
$(\hat{\Phi}_n, \mathrm{d}\hat{\Phi}_n)$ which is supposed to $(\gamma',\delta',\varepsilon_1',\varepsilon_2')$ distribute $\hat{I}_{n,j} \in \hat{\eta}_n,$ where $\hat{\eta}_n$ is the partial partition of $\mathbb{P} \mathrm{T}M$ constructed in (\ref{eq:6:1a}). Furthermore, let $\hat{g}_n\in \textup{Diff}_{\infty}^{\omega}(\T^2,\mu)$ such that $d_k(g_n,\hat{g}_n\restriction_{\T^2}) < \epsilon_n$, where $\epsilon_n$ is given by (\ref{eqn:4:4.2}).  Let $S_n$ be a square of side length equal to $\frac{1}{k_n}$ lying in $\T^1 \times [\delta,1-\delta]$. Denote $\tilde{S}_n =S_n \cap \widetilde{G}_n$ with $\widetilde{G}_n$ defined in (\ref{eqn:future_domain}). For any $k\in \{0,\dots , k_n-1\},$ denote $\tilde{S}_{n,k}= \tilde{S}_n\times T_k$ with $T_k =\left[\frac{k}{k_n},\frac{k+1}{k_n}\right]$. 

Then for every pair $j,k\in \{0,1,\ldots,k_n-1\}$  there exist $l\in  \{0,1,\ldots,k_n-1\}$ such that $k\equiv (l+j)\mod k_n$ and it holds:
$${\left\lvert \mu\left(\pi_M\left(\tilde{I}_{n,l,j}\cap (\hat{\Phi}_n,\mathrm{d}\hat{\Phi}_n)^{-1}\circ (\hat{g}_n,\mathrm{d}\hat{g}_n)^{-1}(\tilde{S}_{n,k})\right)\right) - {{\mu}}(\tilde{I}_{n,l}){\mu(\tilde{S}_{n})}\right\rvert} \leq 2\left(\varepsilon_2'+\frac{69}{n}\right) {\mu}(\tilde{I}_{n,l})\mu(\tilde{S}_{n}).$$
\end{lemma}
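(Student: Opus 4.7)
The plan is to mirror the strategy of Lemma \ref{lem:6.6.4c}: first apply the smooth distribution estimate (Lemma \ref{lem: 6.6}) to $(\hat{\Phi}_n,\mathrm{d}\hat{\Phi}_n)$ composed with the \emph{smooth} $g_n$, and then transfer from $g_n$ to $\hat{g}_n$ using the proximity of the two derivative extensions via Lemma \ref{lem:8:8.2} lifted to $\mathbb{P}\mathrm{T}M$.

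For the first step, since $(\hat{\Phi}_n,\mathrm{d}\hat{\Phi}_n)$ is $(\gamma',\delta',\varepsilon_1',\varepsilon_2')$-distributing with parameters only mildly larger than $(\gamma,\delta,\varepsilon_1,\varepsilon_2)$, the hypotheses of Lemma \ref{lem: 6.6} remain valid for $n$ sufficiently large. I would track the dependence of the proof of Lemma \ref{lem: 6.6} on the input $\varepsilon_2$ — its constant $\tfrac{68}{n}$ arises from $2\varepsilon_2+\tfrac{64}{n}+\tfrac{4}{n}$ after the bound $\varepsilon_2<\tfrac{1}{n}$ — so that substituting $\varepsilon_2'$ for $\varepsilon_2$ yields
$$\left|\mu\left(\pi_M\left(\tilde{I}_{n,l,j}\cap(\hat{\Phi}_n,\mathrm{d}\hat{\Phi}_n)^{-1}\circ(g_n,\mathrm{d}g_n)^{-1}(\tilde{S}_{n,k})\right)\right)-\mu(\tilde{I}_{n,l})\mu(\tilde{S}_n)\right|\leq\left(\varepsilon_2'+\tfrac{68}{n}\right)\mu(\tilde{I}_{n,l})\mu(\tilde{S}_n).$$

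For the second step, I transfer from $g_n$ to $\hat{g}_n$. Corollary \ref{coro:4:4.1}(2) together with \eqref{eqn:4:4.2} gives $\sup_x d((g_n,\mathrm{d}g_n)(x),(\hat{g}_n,\mathrm{d}\hat{g}_n)(x))<\epsilon_n\ll\tfrac{1}{2^n}$. I then construct inner and outer approximations $\tilde{S}_{n,k}^-\subset\tilde{S}_{n,k}\subset\tilde{S}_{n,k}^+$ in $\mathbb{P}\mathrm{T}M$ satisfying $\bar{\mu}(\tilde{S}_{n,k}^\pm\triangle\tilde{S}_{n,k})\leq\tfrac{1}{2^n}\bar{\mu}(\tilde{S}_{n,k})$ and $\mathrm{dist}(\partial\tilde{S}_{n,k}^\pm,\partial\tilde{S}_{n,k})>\epsilon_n$. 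Applying Lemma \ref{lem:8:8.2} with $f=(g_n,\mathrm{d}g_n)$ and $g=(\hat{g}_n,\mathrm{d}\hat{g}_n)$ sandwiches the quantity for $\hat{g}_n$ between the corresponding quantities for $g_n$ on $\tilde{S}_{n,k}^\pm$. Feeding the first-step estimate into each sandwich term, and absorbing the $\tfrac{1}{2^n}$ symmetric-difference error into the $\tfrac{1}{n}$ term, produces the asserted bound $2(\varepsilon_2'+\tfrac{69}{n})\mu(\tilde{I}_{n,l})\mu(\tilde{S}_n)$.

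The main technical obstacle is extending Lemma \ref{lem:8:8.2}, originally phrased on $\T^2$, to the projectivized tangent bundle: the approximations $\tilde{S}_{n,k}^\pm$ must have boundaries separated both in the base manifold \emph{and} in the projective fibre. This is possible precisely because Corollary \ref{coro:4:4.1}(2) provides joint $C^0$-closeness of the two derivative extensions as maps on $\mathbb{P}\mathrm{T}M$; taking $\tilde{S}_{n,k}^\pm$ to be products of thickenings in the base and in the projective direction, each of thickness comparable to $\epsilon_n$ but much less than $\tfrac{1}{k_n}$, completes the construction. The remaining bookkeeping — verifying that the choice of $\mathfrak{d}_n$ in \eqref{eqn:4:4.2} is small enough for all error terms to be absorbed — is routine under the growth conditions \ref{item:P1}--\ref{item:P2}.
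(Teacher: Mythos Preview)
Your proposal is correct and follows essentially the same two-step strategy as the paper: first apply Lemma~\ref{lem: 6.6} with $(\hat{\Phi}_n,\mathrm{d}\hat{\Phi}_n)$ and the \emph{smooth} $g_n$ to inner and outer approximations $\tilde{S}_{n,k}^{\pm}$, then sandwich the $\hat{g}_n$-quantity between these via a Lemma~\ref{lem:8:8.2}-type argument. The paper also obtains exactly the intermediate bound $\bigl(\varepsilon_2'+\tfrac{68}{n}\bigr)\mu(\tilde{I}_{n,l})\mu(\tilde{S}_n^{\pm})$ in its equation~\eqref{eqn:5:5.7a}, matching your first step.

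The one point where you differ from the paper is in the construction of the inner/outer sets. The paper thickens \emph{only in the base}: it takes squares $S_n^1\subset S_n\subset S_n^2$ in $\T^2$ and sets $\tilde{S}_{n,k}^i=(S_n^i\cap\widetilde{G}_n)\times T_k$, keeping the fibre interval $T_k$ fixed; it then invokes Lemma~\ref{lem:8:8.2} via $d_1(\hat{\Phi}_n^{-1}\circ g_n^{-1},\hat{\Phi}_n^{-1}\circ\hat{g}_n^{-1})<\mathfrak{d}_n$. You instead thicken in both base \emph{and} fibre, appealing to Corollary~\ref{coro:4:4.1}(2) for closeness of the full derivative extensions on $\mathbb{P}\mathrm{T}M$. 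Your version is the more careful one: the paper's sandwich tacitly requires that $\mathrm{d}g_n(w)\in T_k$ forces $\mathrm{d}\hat{g}_n(w)\in T_k$, which is not literally guaranteed without a fibre thickening (even though $\|\mathrm{d}g_n-\mathrm{d}\hat{g}_n\|_0<\epsilon_n$). Your identification of this as ``the main technical obstacle'' is apt, and your resolution via product thickenings of width $\epsilon_n\ll\tfrac{1}{k_n}$ is exactly what is needed to make the argument watertight. The final numerical outcome is the same.
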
 
\begin{proof}
The proof follows a similar line of reasoning as the previous lemma. For notation, let $\tilde{S}_n = S_n \cap \widetilde{G}_n$ and $S_n$ be a square of side length $\frac{1}{k_n}$ lying in $\T^1 \times [\delta',  1-\delta']$. Now, consider two squares, $S_n^1$ and $S_n^2$, such that $S_n^1\subset {S}_n \subset S_n^2$, $\mu(S_n^i \triangle {S}_n) \leq \frac{1}{2^n}\cdot \mu(\tilde{S}_n)$, and $\text{dist}(\partial S_n^i, \partial \tilde{S}_n)> \mathfrak{d}_n$ for $i=1,2$.
Denote the cuboid $\tilde{S}_{n,k} = \tilde{S}_n \times T_k$ and $\tilde{S}_{n,k}^i = \tilde{S}_n^i \times T_k$ for $i=1,2$ where $\tilde{S}_{n}^i = S_n^i\cap \widetilde{G}_n.$ These cuboids satisfy the following conditions: $\tilde{S}_{n,k}^1\subset \tilde{S}_{n,k}\subset \tilde{S}_{n,k}^2.$

Using Lemma \ref{lem: 6.6} along with Remark ~\ref{lem:corolarly} for the maps $(g_n,\mathrm{d}g_n)$ and $(\hat{\Phi}_n, \mathrm{d}\hat{\Phi}_n)$, characterized by $(\gamma',\delta',\varepsilon_1',\varepsilon_2')$ distribute $\hat{I}_{n,j} = \cup_{l =0}^{k_n-1}\tilde{I}_{n,l,j} \in \hat{\eta}_n$  for the cuboid $\tilde{S}_{n,k}^i,$ $i=1,2$: for  any $j,k\in \{0,1,\ldots,k_n-1\}$  there exist $l\in  \{0,1,\ldots,k_n-1\}$ such that $k\equiv (l+j)\mod k_n,$ and the following estimates holds:
\begin{align} \label{eqn:5:5.7a}
    {\left\lvert {\mu}\left(\pi_M\left(\tilde{I}_{n,l,j}\cap (\hat{\Phi}_n,\mathrm{d}\hat{\Phi}_n)^{-1}\circ ({g}_n,\mathrm{d}{g}_n)^{-1}(\tilde{S}_{n,k}^i)\right)\right) - {\mu}(\tilde{I}_{n,l}){\mu(\tilde{S}_{n}^i)}\right\rvert} \leq \left(\varepsilon_2'+ \frac{68}{n}\right) {\mu}(\tilde{I}_{n,l})\mu(\tilde{S}_{n}^i).
\end{align}
Using the mean value theorem along with  $d_1(g_n,\hat{g}_n\restriction_{\T^2})< \epsilon_n,$ and equation (\ref{eqn:4:4.2}), we have $d_1(\hat{\Phi}_n^{-1}\circ g_n^{-1},\hat{\Phi}_n^{-1}\circ\hat{g}_n^{-1}\restriction_{\T^2})< \mathfrak{d}_n.$ Furthermore, considering the property of Lemma \ref{lem:8:8.2}, we can establish the following relations:
\begin{align*}
{\mu}(\pi_M(\tilde{I}_{n,l,j}\cap (\hat{\Phi}_n,\mathrm{d}\hat{\Phi}_n)^{-1}\circ ({g}_n,\mathrm{d}{g}_n)^{-1}(\tilde{S}_{n,k}^1))) &\leq {\mu}(\pi_M(\tilde{I}_{n,l,j}\cap (\hat{\Phi}_n,\mathrm{d}\hat{\Phi}_n)^{-1}\circ (\hat{g}_n,\mathrm{d}\hat{g}_n)^{-1}(\tilde{S}_{n,k})));\\ 
{\mu}(\pi_M(\tilde{I}_{n,l,j}\cap (\hat{\Phi}_n,\mathrm{d}\hat{\Phi}_n)^{-1}\circ (\hat{g}_n,\mathrm{d}\hat{g}_n)^{-1}(\tilde{S}_{n,k})))&\leq 
{\mu}(\pi_M(\tilde{I}_{n,l,j}\cap (\hat{\Phi}_n,\mathrm{d}\hat{\Phi}_n)^{-1}\circ ({g}_n,\mathrm{d}{g}_n)^{-1}(\tilde{S}_{n,k}^2))).
\end{align*}
Altogether with equation~(\ref{eqn:5:5.7a}), it holds:
\begin{align} 
&{\mu}\left(\pi_M\left(\tilde{I}_{n,l,j}\cap (\hat{\Phi}_n,\mathrm{d}\hat{\Phi}_n)^{-1}\circ (\hat{g}_n,\mathrm{d}\hat{g}_n)^{-1}(\tilde{S}_{n,k})\right)\right) - {\mu}(\tilde{I}_{n,l}){\mu(\tilde{S}_{n})} \nonumber \\
\leq & \bigg(\bigg(\varepsilon_2'+ \frac{68}{n}\bigg)\bigg(1+ \frac{1}{2^n}\bigg) + \frac{1}{2^n}\bigg){\mu}(\tilde{I}_{n,l})\mu(\tilde{S}_{n}) \leq 2\left(\varepsilon_2'+\frac{69}{n}\right) {\mu}(\tilde{I}_{n,l})\mu(\tilde{S}_{n}) .\label{eqn:5:5.7u}
\end{align}
In the subsequent equation, we apply $(1+ \frac{1}{2^n})<2$ and $\frac{1}{2^n} < \frac{2}{n}.$ Similarly, we derive the opposite side of the inequality and thereby validate the claim.
\end{proof} 

\begin{proposition}{(Criterion for analytic weak mixing)} \label{prop:6.6.3b}
  Let $\hat{f}_n=\hat{H}_n\circ R_{\alpha_{n+1}}\circ \hat{H}_n^{-1}$ be defined by (\ref{eqn:4:4.1a}),  such that $(\hat{f}_n,\mathrm{d}\hat{f}_n)$ converges to $(\hat{f},\mathrm{d}\hat{f})$ and satisfies { $d_1(\hat{f}_n, \hat{f})< \frac{1}{2^n} $} for all $n\in \N$. Additionally we assume that the sequence $(H_n)_{n\in \N}$ satisfies condition \ref{item:P1}, and the map $g_n$ is as in Lemma~\ref{lem:6.4d}. Consider a sequence of partial partitions $(\hat{\eta}_n)_{n\in \mathbb{N}}$, defined as in (\ref{eq:6:1a}) such that $(\hat{\eta}_n) \to \epsilon$. Additionally, we assume that the partial partition  
$$\hat{\hat{\nu}}_n= \Meng{\Gamma_{n,j}= (\hat{H}_{n-1},\mathrm{d} \hat{H}_{n-1})\circ (\hat{g}_n,\mathrm{d}\hat{g}_n)(\hat{I}_{n,j})}{ \hat{I}_{n,j}\in \hat{\eta}_n},$$
  satisfies $\hat{\hat{\nu}}_n \to \epsilon.$ Suppose for a sequence $(m_n)_{n\in \N}$ and the diffeomorphism $(\hat{\Phi}_n,\mathrm{d}\hat{\Phi}_n):\mathbb{P} \mathrm{T}M\to \mathbb{P} \mathrm{T}M$ with $\hat{\Phi}_n= \hat{\phi}_n\circ R_{\alpha_{n+1}}^{m_n}\circ \hat{\phi}_n^{-1}$ is $(\gamma,\delta,\varepsilon_1,\varepsilon_2)$-distributing $\hat{I}_{n,j}\in\hat{\eta}_n$ with 
$\gamma < \frac{1}{k_n^3q_n}, \delta < \frac{50}{k_n^2}, \varepsilon_1,\varepsilon_2<\frac{150}{n}$. Then  $(\hat{f},\mathrm{d}\hat{f})= \lim_{n\to\infty}(\hat{f}_n,\mathrm{d}\hat{f}_n)$ is weakly mixing on $\mathbb{P} \mathrm{T}M$.
\end{proposition}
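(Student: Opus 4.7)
The plan is to mirror the argument of Proposition~\ref{prop:6.6.3a}, replacing every smooth ingredient by its analytic counterpart and tracking the extra approximation errors introduced by passing from $\phi_n, g_n, H_{n-1}$ to $\hat{\phi}_n, \hat{g}_n, \hat{H}_{n-1}$. Fix $\varepsilon>0$, a partition element $\hat{I}_{n,j}=\bigcup_{l=0}^{k_n-1}\tilde{I}_{n,l,j}\in\hat{\eta}_n$ and, for a square $S_n\subset \T^1\times[\delta,1-\delta]$ of side length $1/k_n$, set $\tilde{S}_{n,k}=(S_n\cap\widetilde{G}_n)\times T_k$ and $C_{n,k}=(\hat{H}_{n-1},\mathrm{d}\hat{H}_{n-1})(\tilde{S}_{n,k})$, $\Gamma_{n,l,j}=(\hat{H}_{n-1},\mathrm{d}\hat{H}_{n-1})\circ(\hat{g}_n,\mathrm{d}\hat{g}_n)(\tilde{I}_{n,l,j})$, exactly as in the smooth proof.

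First I would establish an analytic analogue of Lemma~\ref{lemm~measurepreservation}. The only property of $h_n$ used there is that the composition $h_{n+s}^{-1}\circ\ldots\circ h_n^{-1}$ acts as an isometry of $\omega_0$ on $\widetilde{G}_n$. In the analytic setting $\hat{h}_n$ is only an \emph{almost} isometry on the partition elements of $\zeta_n$, with deviation controlled by the precision parameter $\mathfrak{d}_n$ through the approximation \eqref{eqn:4:4.2}. Repeating the calculation in the proof of Lemma~\ref{lemm~measurepreservation} with $\omega_\infty$ the limit metric produced via Proposition~\ref{prop:metric}, one obtains
\[
\bigl|\bar{\mu}(\mathcal{A}_{n,l,j})-\tfrac{1}{k_n}\mu(\pi_M(\tilde{c}_{n,l,j}))\bigr|\le \tfrac{36(k_n-1)}{k_n^5}\mu(\tilde{I}_{n,l})+\mathcal{E}_n\mu(\tilde{I}_{n,l}),
\]
where $\mathcal{E}_n=O(\sum_{k\ge n}\mathfrak{d}_k)$ absorbs the multiplicative distortion of the almost isometries. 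By the growth conditions \ref{item:P2} on $l_n$ (and hence on $\mathfrak{d}_n$) we have $\mathcal{E}_n=o(1/n)$, so this error is dominated by the same terms that appear in the smooth case.

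With that lemma in hand, I would apply Lemma~\ref{lem:6.4d} to the map $(\hat{\Phi}_n,\mathrm{d}\hat{\Phi}_n)\circ(\hat{g}_n,\mathrm{d}\hat{g}_n)^{-1}$. Since by hypothesis $(\hat{\Phi}_n,\mathrm{d}\hat{\Phi}_n)$ is $(\gamma,\delta,\varepsilon_1,\varepsilon_2)$-distributing with $\varepsilon_1,\varepsilon_2<150/n$, Lemma~\ref{lem:6.4d} yields, for each pair $j,k$ and the unique $l$ with $k\equiv l+j\pmod{k_n}$,
\[
\bigl|\mu\bigl(\pi_M(\tilde{I}_{n,l,j}\cap(\hat{\Phi}_n,\mathrm{d}\hat{\Phi}_n)^{-1}\circ(\hat{g}_n,\mathrm{d}\hat{g}_n)^{-1}(\tilde{S}_{n,k}))\bigr)-\mu(\tilde{I}_{n,l})\mu(\tilde{S}_n)\bigr|\le \tfrac{C}{n}\mu(\tilde{I}_{n,l})\mu(\tilde{S}_n)
\]
for some absolute constant $C$. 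Combining with the analytic measure-preservation estimate exactly as in the derivation of \eqref{eqn:5:5:28} gives
\[
\bigl|\bar{\mu}(\Gamma_{n,l,j}\cap(\hat{f}_n,\mathrm{d}\hat{f}_n)^{-m_n}(C_{n,k}))-\mu(\tilde{I}_{n,l})\bar{\mu}(C_{n,k})\bigr|\le \tfrac{C'}{n}\mu(\tilde{I}_{n,l})\bar{\mu}(C_{n,k}).
\]
Summing over $l$ (using that exactly one $l$ contributes for fixed $j,k$) and combining with the analytic version of \eqref{eqn:5:5:29}, one obtains the same key estimate as in the smooth case, now with an absolute constant that is larger but still $O(1/n)$.

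The last step is to approximate a general $3$-dimensional cube $A\subset\mathbb{P}\mathrm{T}M$ from inside and outside by disjoint unions of sets of the form $C_{n,k}^i$, using that $\bar{\mu}(\bigcup_{(i,k)}C_{n,k}^i)\to 1$ (same count as in the smooth case, since condition \ref{item:P1} is required of the sequence $(H_n)_{n\in\N}$). Choosing $n$ sufficiently large so that both the approximation error of $A$ and the quantity $C'/n$ are at most $\varepsilon/3$, the triangle inequality reproduces the chain of inequalities at the end of the proof of Proposition~\ref{prop:6.6.3a} and yields \eqref{eqn:6.2a}, with $\hat{\nu}_n=\hat{\hat{\nu}}_n\to\varepsilon$ by hypothesis. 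Lemma~\ref{thm:6.12} then gives weak mixing of $(\hat{f},\mathrm{d}\hat{f})$ with respect to $\bar{\mu}$.

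The main obstacle, as indicated above, is promoting Lemma~\ref{lemm~measurepreservation} to the analytic setup: one must check that the cumulative deviation of $\hat{h}_{n+s}^{-1}\circ\ldots\circ \hat{h}_n^{-1}$ from an isometry along $\widetilde{G}_n$ does not swamp the $O(1/n)$ budget in the weak-mixing estimate. This is what forces the very small choice of $\mathfrak{d}_n$ in \eqref{eqn:4:4.2}; once the measurable Riemannian metric of Proposition~\ref{prop:metric} is in place, the rest of the proof is bookkeeping parallel to the smooth case.
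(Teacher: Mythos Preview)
Your proposal is essentially correct and follows the same route as the paper: the paper's own proof is a one-line ``the proof follows along the lines of Proposition~\ref{prop:6.6.3a}, utilizing Lemma~\ref{lem:6.6.4c} and Lemma~\ref{lem:6.4d}'', and you have unpacked exactly that scheme, with Lemma~\ref{lem:6.4d} playing the role of Lemma~\ref{lem: 6.6} and the remainder of the argument running in parallel to the smooth case. Your explicit discussion of the analytic analogue of Lemma~\ref{lemm~measurepreservation}---where the exact isometry of $h_k^{-1}\circ\cdots\circ h_n^{-1}$ on $\widetilde{G}_n$ is replaced by an almost-isometry with cumulative deviation controlled by $\sum_{k\ge n}\mathfrak{d}_k$---is a point the paper leaves implicit, and you are right to flag it as the step that forces the small choice of $\mathfrak{d}_n$ in \eqref{eqn:4:4.2}.
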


\begin{proof}
The proof follows along the lines of Proposition \ref{prop:6.6.3a}, utilizing Lemma \ref{lem:6.6.4c} and Lemma \ref{lem:6.4d}.
    \end{proof}

\section{Explicit constructions} \label{sec:exp_setup}
\subsection{Sequence of partial partitions}
In this subsection, we define three sequences of partial partitions $\{\eta_n\}_{n\in \N}, \{\zeta_n\}_{n\in \N}$ and $\{\tilde{\eta}_n\}_{n\in \N}$ of the $2$-torus. For each $n\in \N,$  the partial partition $\zeta_n$ is a refined partition of the partial partition $\eta_n,$ and the partition elements of $\tilde{\eta}_n$ are unions of the partition elements of $\zeta_n.$ We define these sequences of partial partitions using the sequences $\{k_n\}_{n\in \N}$ and $\{q_n\}_{n \in \N}$ of rapidly growing natural numbers satisfying $\ref{item:P1}- \ref{item:P2}$.

\subsubsection{Partial partition \texorpdfstring{$\eta_n$}{Lg}} \label{subsubsec:eta}
Consider the following collection of multidimensional intervals in following form: 
\begin{align}\label{eqn:2.2b}
    \check{I}^{u_0,u_1,u_2}_{v_0}= &\left[ \frac{u_0}{2q_n}+ \frac{u_1}{2k_nq_n}+ \frac{u_2}{2k_n^6q_n} + \frac{1}{4n^5k_n^{16}q_n},\frac{u_0}{2q_n}+ \frac{u_1}{2k_nq_n}+ \frac{u_2+1}{2k_n^6q_n}  - \frac{1}{4n^5k_n^{16}q_n}\right]\nonumber \\
    & \times \left[ \frac{v_0}{k_n^5} + \frac{1}{2n^5k_n^{15}},\frac{v_0+1}{k_n^5}- \frac{1}{2n^5k_n^{15}}\right],
\end{align}
where $u_0\in\{0,1,\ldots,2q_n-1\},\ u_1\in\{0,1\ldots,k_n-1\}$ and $v_0,u_2 \in\{1,\ldots,k_n^5-2\}$.
Denote by $\eta_n$ the collection of such intervals $ \check{I}^{u_0,u_1,u_2}_{v_0}$ of $\mathbb{T}^2.$ 

\remark Note that $\eta_n$ is a partial partition of $\mathbb{T}^2$. For every $n\in \N$, $\eta_n$ consists of disjoint sets, covers a set of measure at least  $\left(1-\frac{2}{k_n^{5}}\right)^4\geq 1-\frac{8}{k_n^{5}}$ and for $I\in \eta_n,$ $\text{diam}(I)\leq \frac{\sqrt{2}}{k_n^5} \rightarrow 0$ as $n\rightarrow\infty$. Hence, $(\eta_n)_{n\in\N}$ converges to the decomposition into points. 

\subsubsection{Partial partition \texorpdfstring{$\zeta_n$}{Lg}} \label{subsubsec:zeta}
Consider the following collection of multidimensional intervals in the given form: 
\begin{align}\label{eqn:2.2a}
  I^{u_0,u_1,u_2,u_3,u_4}_{v_0,v_1,v_2}  = & \bigg[ \frac{u_0}{2q_n}+ \frac{u_1}{2k_nq_n}+ \frac{u_2}{2k_n^6q_n} + \frac{u_3}{2k_n^{11}q_n}+ \frac{u_4}{2k_n^{16}q_n} + \frac{1}{4n^5k_n^{26}q_n}, \nonumber \\
     & \ \frac{u_0}{2q_n}+ \frac{u_1}{2k_nq_n}+ \frac{u_2}{2k_n^6q_n} + \frac{u_3}{2k_n^{11}q_n}+ \frac{u_4+1}{2k_n^{16}q_n} - \frac{1}{4n^5k_n^{26}q_n}\bigg]  \nonumber  \\
      \times &\left[ \frac{v_0}{k_n^5}+ \frac{v_1}{2k_n^{11}q_n}+ \frac{v_2}{2k_n^{16}q_n}+ \frac{1}{4n^5k_n^{26}q_n},\frac{v_0}{k_n^5}+ \frac{v_1}{2k_n^{11}q_n}+ \frac{v_2+1}{2k_n^{16}q_n}- \frac{1}{4n^5k_n^{26}q_n}\right],
\end{align}
where $u_0\in\{0,1,\ldots,2q_n-1\},\ u_1\in\{0,1\ldots,k_n-1\},$ $v_0, v_2, u_2,u_3, u_4 \in \{1,\ldots,k_n^5-2\},$ 
and
 $v_1\in\{2q_n,\ldots,2k_n^6q_n-2q_n-1\}.$
Let $\zeta_n$ be the collection of such intervals $ I^{u_0,u_1,u_2,u_3,u_4}_{v_0,v_1,v_2}$.
\remark Note that $\zeta_n$ is a partial partition of $\mathbb{T}^2$. For every $n\in \N$, $\zeta_n$ consists of disjoint sets, covers a set of measure at least  $\left(1-\frac{2}{k_n^5}\right)^8\geq 1-\frac{16}{k_n^5}$ and for $I\in \zeta_n,$ $\text{diam}(I)\leq \frac{\sqrt{2}}{k_n^{15}q_n} \rightarrow 0$ as $n\rightarrow\infty$. Thus, $(\zeta_n)_{n\in\N}$ converges to the decomposition into points. We will use the sequence $(\zeta_n)_{n\in \N}$ in section~\ref{sec:metric} to verify that the assumptions of our criterion in Proposition~\ref{prop:metric} for the existence of a $f$-invariant measurable Riemannian metric are satisfied in our explicit constructions.

\remark\label{rem:2.7a} Note that $\zeta_n$ is a refined partition of $\eta_n$ where each element $\check{I}^{u_0,u_1,u_2}_{v_0}\in \eta_n$ is covered by some union of elements $I^{u_0,u_1,u_2,u_3,u_4}_{v_0,v_1,v_2}\in \zeta_n$ with a measure of at least 
$\left(1-\frac{8}{k_n^5}\right)\mu(\check{I}^{u_0,u_1,u_2}_{v_0}),$
\begin{align}\label{eqn:2.2c}
\bigcup_{v_1=2q_n}^{2k_n^{6}q_n-2q_n-1}\bigcup_{u_3, u_4, v_2=1}^{k_n^5-2}I^{u_0,u_1,u_2,u_3,u_4}_{v_0,v_1,v_2} \subseteq \check{I}^{u_0,u_1,u_2}_{v_0}.
\end{align}

\subsubsection{Partial partition \texorpdfstring{$\tilde{\eta}_n$}{Lg}} \label{subsubsec:Tildeeta}
This collection will be useful in defining a partial partition of the space $\mathbb{P} \mathrm{T}M$ and referring to it as the ``good domain" in our explicit construction. Denote $\tilde{\eta}_n$ be collection of the following elements for $u_0\in \{0,\ldots,2q_n-1\}$ and $ v_0 \in \{1,\ldots,k_n^5-2\}$ as
\begin{align}\label{eqn:7.1a}
\tilde{I}^{u_0}_{v_0} = \bigcup_{u_1 =0}^{k_n-1}\tilde{I}^{u_0,u_1}_{v_0}; \ \text{where} \ 
\tilde{I}^{u_0,u_1}_{v_0}= \bigcup_{u_2=1}^{k_n^5-2} \bigcup_{v_1=2q_n}^{2k_n^{6}q_n-2q_n-1}\bigcup_{u_3, u_4, v_2=1}^{k_n^5-2}I^{u_0,u_1,u_2,u_3,u_4}_{v_0,v_1,v_2}.
\end{align}
\remark\label{rem:assumption_intesection}Note that for each element $\tilde{I}^{u_0,u_1}_{v_0}$, we have 
$
\frac{1}{2k_n^6q_n}\left(1 - \frac{16}{k_n^5}\right) \leq \mu(\tilde{I}^{u_0,u_1}_{v_0}) \leq \frac{1}{2k_n^6q_n},$
and $\tilde{\eta}_n$ covers a set with a measure of at least $\left(1-\frac{25}{k_n^5}\right).$ This verify the partial partition $\tilde{\eta}_n$ satisfies the properties in  \ref{item:D2} and \ref{item:p1}. Furthermore, according to Remark \ref{rem:2.7a}, we have $
\tilde{I}_{v_0}^{u_0,u_1} \subseteq \bigcup_{u_2 = 1}^{k_n^3-1}\check{I}^{u_0,u_1,u_2}_{v_0},$ and  $\mu\left(\bigcup_{u_2 = 1}^{k_n^5-2}\check{I}^{u_0,u_1,u_2}_{v_0} \cap \tilde{I}_{v_0}^{u_0,u_1}\right) \geq \left(1-\frac{8}{k_n^5}\right)\mu\left(\bigcup_{u_2 = 1}^{k_n^5-2}\check{I}^{u_0,u_1,u_2}_{v_0}\right).$ 
\remark\label{item:verify:D5}Furthermore, with $q_{n+1}> {2k_n^{10}q_n^2}$ in \ref{item:P2}, and using the definition of the sequence of partial partitions $\{\tilde{\eta}_n\}_{n \in \mathbb{N}},$ where each $\tilde{I}_{n+1} \in \tilde{\eta}_{n+1}$ is a refined partition element of  $\tilde{I}_n \in \tilde{\eta}_n,$  we can verify property \ref{item:p4}.

\subsection{Smooth conjugation maps \texorpdfstring{$\phi_n$}{Lg} }\label{sec:phi}

\subsubsection{The conjugation map \texorpdfstring{$\Tilde{\phi}_{n}$}{Lg}}

In \cite[section 2.1]{Ku20} the smooth area-preserving diffeomorphism $\Tilde{\phi}_{\lambda,\varepsilon,\mu,\varepsilon_2}$ on $\T^1\times [0,1]$
satisfying the subsequent properties is constructed and referred to as the ``inner rotation of type A". With minor to no modifications, the proposition holds for the case of $[0,1]^2.$ 
\begin{proposition}\label{prop:1a}
 Let $\varepsilon,\varepsilon_2\in(0,\frac{1}{4})$ and $\lambda,\mu\in \N$. Then there is a smooth area-preserving diffeomorphism $\Tilde{\phi}_{\lambda,\varepsilon,\mu,\varepsilon_2}: [0,1]^2\rightarrow [0,1]^2$
such that
\begin{enumerate}
 \item $\Tilde{\phi}_{\lambda,\varepsilon,\mu,\varepsilon_2}$ coincides with the identity on $[0,1]^2\backslash [\varepsilon,1-\varepsilon]^2$;
 \item   $\Tilde{\phi}_{\lambda,\varepsilon,\mu,\varepsilon_2}\left(x+\frac{l}{q_n},y\right) = \Tilde{\phi}_{\lambda,\varepsilon,\mu,\varepsilon_2}(x,y)$ for all $(x,y)\in \left[\frac{l}{q_n},\frac{l+1}{q_n}\right)\times [0,1], l\in \mathbb{Z}.$ 
 \item Let $t_2\in \Z$, $\lceil 2\varepsilon\mu\rceil \leq t_2\leq \mu- \lceil 2\varepsilon\mu\rceil-1, |u_2|\leq \varepsilon_2,$ and $u_1\in (2\varepsilon,\frac{1}{2})$ be of the form $\frac{t_1}{\mu}$ with $t_1\in \Z$. Then we have 
    \begin{align}
\Tilde{\phi}_{\lambda,\varepsilon,\mu,\varepsilon_2}\left(\left[ \frac{u_1}{\lambda}, \frac{1-u_1}{\lambda}\right] \times \left[\frac{t_2+u_2}{\mu},\frac{t_2+1-u_2}{\mu}\right] \right) \nonumber\\
    = \left[\frac{1}{\lambda}-\frac{t_2+1-u_2}{\lambda\mu},\frac{1}{\lambda}-\frac{t_2+u_2}{\lambda\mu}\right] \times [u_1, 1-u_1]; \nonumber
    \end{align}
    \item $\Tilde{\phi}_{\lambda,\varepsilon,\mu,\varepsilon_2}$ acts as isometry on each cuboid 
    $$\left[\frac{t_1+2\varepsilon_2}{\lambda\mu}, \frac{t_1+1-2\varepsilon_2}{\lambda\mu}\right] \times \left[\frac{t_2+2\varepsilon_2}{\mu}, \frac{t_2+1-2\varepsilon_2}{\mu}\right],$$
    where $t_i\in \Z, \lceil 2\varepsilon \mu\rceil \leq t_i\leq \mu-\lceil 2\varepsilon\mu\rceil -1$ for $i=1,2;$
   \item $\vertiii{\Tilde{\phi}_{\lambda,\varepsilon,\mu,\varepsilon_2}}_r \leq c \cdot \lambda^r \mu^{r}$ where $c$ is a constant independent of $\lambda$ and $\mu$. 

\end{enumerate}
\end{proposition}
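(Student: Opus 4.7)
The plan is to reduce the result to the construction of the ``inner rotation of type A'' on $\T^1 \times [0,1]$ given in \cite[Section 2.1]{Ku20}, which already produces a smooth area-preserving diffeomorphism satisfying analogues of properties (2)--(5). Since in the cylinder setting the constructed map is the identity on $\T^1 \times ([0,\varepsilon] \cup [1-\varepsilon,1])$, the only obstruction to transfer is the lack of periodicity of a square in the $x$-direction; this is handled by property (1), which prescribes the identity also on $([0,\varepsilon] \cup [1-\varepsilon, 1]) \times [0,1]$, so that the map can be smoothly truncated at the two vertical edges of $[0,1]^2$ without destroying area preservation.

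Concretely, I would first design a ``skeleton'' area-preserving map on a single fundamental vertical strip $[0,1/\lambda] \times [0,1]$ that realizes property (3) exactly, sending the horizontal slab labeled by $t_2$ to the corresponding thin vertical column inside $[1/\lambda - (t_2+1)/(\lambda\mu),\,1/\lambda - t_2/(\lambda\mu)] \times [0,1]$. Up to a reflection this skeleton is the rescaled $90^{\circ}$ rotation $(x,y) \mapsto (y/\lambda, \lambda x)$, which is area-preserving. I would then realize this skeleton as the time-$1$ flow of a compactly supported time-dependent Hamiltonian inside $(\varepsilon, 1/\lambda - \varepsilon) \times (\varepsilon, 1 - \varepsilon)$, smoothly damped near the boundary of each target cell by a product cut-off of the form $\rho_{\varepsilon_2}(\lambda\mu\, x)\,\rho_{\varepsilon_2}(\mu\, y)$, where $\rho_{\varepsilon_2} \in C^{\infty}([0,1])$ equals $1$ on $[2\varepsilon_2, 1 - 2\varepsilon_2]$ and vanishes near $\{0,1\}$. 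Extending by identity to the remaining vertical strips yields property~(1); the linearity of the Hamiltonian inside each cell yields the isometry property (4); and imposing that $\lambda$ be an integer multiple of $q_n$, as is the case in all intended applications of this lemma, gives the $(1/q_n)$-periodicity in property~(2).

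The main obstacle is the simultaneous fulfilment of (3) and (4): the map must act as an \emph{exact} isometry on each of the interior cuboids of property (4), while globally implementing the rearrangement of property (3) and remaining area-preserving and smooth across cell boundaries. The Hamiltonian-plus-cut-off construction above resolves this by choosing the Hamiltonian to be affine on each cell (so its flow is translation there, hence an isometry), with the smoothing confined to bands of width $\varepsilon_2/\mu$ around those cells. The norm estimate (5), $\vertiii{\tilde{\phi}_{\lambda,\varepsilon,\mu,\varepsilon_2}}_r \leq c\cdot \lambda^r \mu^r$, then follows from the chain rule and the Faà di Bruno formula as in Lemma~\ref{le:2a}: the rescaling by $\lambda\mu$ in the cut-off profile is the only source of $\lambda$- and $\mu$-dependence, and each differentiation contributes at most one such factor, the cut-off $\rho_{\varepsilon_2}$ and its derivatives being bounded independently of $\lambda$ and $\mu$.
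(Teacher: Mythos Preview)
Your proposal is correct and takes essentially the same approach as the paper, which simply cites the construction of the ``inner rotation of type A'' from \cite[Section~2.1]{Ku20} on $\T^1\times[0,1]$ and remarks that ``with minor to no modifications, the proposition holds for the case of $[0,1]^2$.'' Your sketch in fact goes further than the paper by outlining those modifications and the Hamiltonian realization; the only phrasing to tighten is ``extending by identity to the remaining vertical strips,'' which should read ``extending $\frac{1}{\lambda}$-periodically in $x$'' (the compact support of the Hamiltonian inside each strip then gives smoothness across strip boundaries and, since $q_n\mid\lambda$, property~(2), while the identity near $y\in\{0,1\}$ and near $x\in\{0,1\}$ yields property~(1)).
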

In our explicit construction, we choose the parameters as follows to define $\Tilde{\phi}_{n}: [0,1]^2\rightarrow [0,1]^2$ by
\begin{align}
\label{eqn:3.1.1b}
\Tilde{\phi}_{n} = \tilde{\phi}_{2k_nq_n,\frac{1}{2k_n^5},k_n^5,\frac{1}{4n^5k_n^{10}}}.
\end{align}
\remark\label{re:4a} The action of the map $\Tilde{\phi}_n$ on the elements of the partial partitions $\zeta_n$ and $\eta_n$ can be described by
\begin{align}
   \tilde{\phi}_n  (\check{I}^{u_0,u_1,u_2}_{v_0}) &= \check{I}^{u_0,u_1,k_n^5-v_0-1}_{u_2},\\
   \tilde{\phi}_n  (I^{u_0,u_1,u_2,u_3,u_4}_{v_0,v_1,v_2}) &= I^{u_0,u_1,k_n^5-v_0-1,u_3,u_4}_{ u_2,v_1,v_2}.
\end{align} 
\subsubsection{The conjugation map \texorpdfstring{$i_n$}{Lg}}
Within this subsection, we introduce what we refer to as the ``inner rotations of type B". These rotations will play a crucial role in establishing the weak mixing property for the  projectivized derivative extension. Specifically, we will use distinct rotation angles applied to specific regions, which allow us to analyze the distribution of elements in the tangent direction effectively.
\begin{figure}[t!]
    \centering
    \includegraphics[width=.7\textwidth]{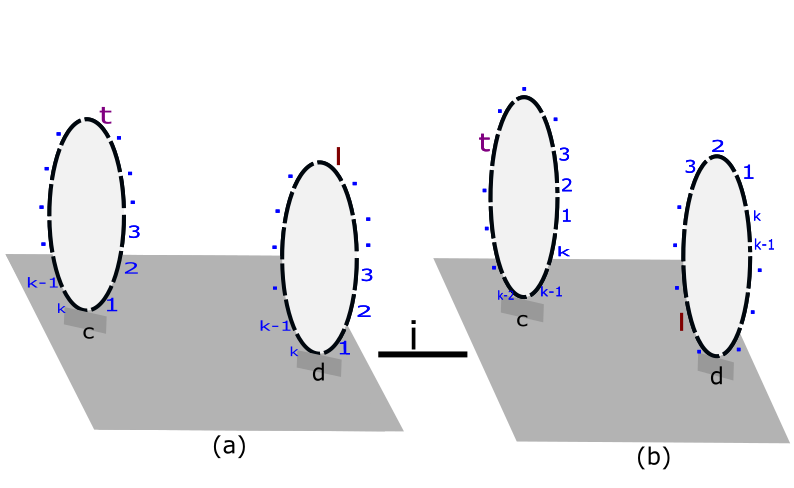}
    \caption{An example of the action $i_n$
  within the 
$\T^2$ transform on the fundamental domains is shown in (a) and (b).
    }
    \label{fig:inner_rotation_B}
\end{figure}
\begin{proposition}\label{prop:2a}
    Let $b_n= 2k_nq_n, a_n=2k_n^{11}q_n, c_n= k_n^5, \varepsilon_n= \frac{1}{2n^5k_n^{11}}.$ There is a smooth measure-preserving diffeomorphsim $i_n: [0,1]^2\rightarrow [0,1]^2$ such that 
    \begin{enumerate}
        \item  $i_n$ coincides with the identity on $[0,1]^2\backslash [\varepsilon_n,1-\varepsilon_n]^2$.
        \item Each square of form $\left[\frac{\mathrm{u}}{a_n},\frac{\mathrm{u}+1}{a_n}\right]\times \left[\frac{\mathrm{v}}{a_n},\frac{\mathrm{v}+1}{a_n}\right] $ with $\mathrm{u}, \mathrm{v} \in \mathbb{Z}$ is mapped onto itself by $i_n$ and $i_n$ coincides with the identity on an $\frac{\varepsilon_n}{a_n}$- neighbourhood of its boundary. 
        \item On every square $\left[\frac{l}{b_n}+\frac{\mathrm{u}}{a_n}+\frac{s_1+2\varepsilon_n}{a_nc_n},  \frac{l}{b_n}+\frac{\mathrm{u}}{a_n}+\frac{s_1+1-2\varepsilon_n}{a_nc_n}\right]\times \left[\frac{\mathrm{v}}{a_n}+\frac{s_2+2\varepsilon_n}{a_nc_n}, \frac{\mathrm{v}}{a_n}+\frac{s_2+1-2\varepsilon_n}{a_nc_n}\right],$ \\
        where $s_1,s_2 \in \mathbb{Z},$ $1\leq s_1, s_2 \leq c_n-2, \ \mathrm{u} \in \{0,\ldots,\frac{a_n}{b_n}-1\}, \  \mathrm{v}\in \{0,\ldots,a_n-1\},$  $i_n$ is a composition of a translation and a rotation by $\beta_l,$ where $\beta_l\equiv \frac{s\pi}{k_n}$ with $s\equiv l \mod k_n.$
        \item $i_n\left(x+\frac{p}{q_n},y\right) = i_n(x,y)$ for all $(x,y)\in \left[\frac{p}{q_n},\frac{p+1}{q_n}\right)\times [0,1],\ p\in \mathbb{Z}.$ 
        \item $\vertiii{i_n}_{r} \leq c_{n,k_n,r}'\cdot q_n^{r-1}$ where the constant $c_{n,k_n,r}'$ is independent of $q_n.$
    \end{enumerate}
\end{proposition}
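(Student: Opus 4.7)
The plan is to construct $i_n$ as a tiling of local Hamiltonian diffeomorphisms, one per $1/a_n$-cell, closely paralleling the ``inner rotation of type A'' construction from \cite[Section 2.1]{Ku20} invoked as Proposition~\ref{prop:1a} above, but with a richer angular structure depending on the column index $l$. Specifically, on each cell $[u/a_n,(u+1)/a_n]\times[v/a_n,(v+1)/a_n]$, the local map $i_n^{(u,v)}$ is taken to be the time-$1$ flow of a compactly supported Hamiltonian that is the identity on the $\varepsilon_n/a_n$-frame and realises on each inner sub-square (shrunk by $2\varepsilon_n/(a_n c_n)$) the prescribed affine map $x\mapsto R_{\beta_l}x + \tau_{s_1,s_2,u,v}$, with $\beta_l = s\pi/k_n$, $s\equiv l\bmod k_n$. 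Gluing the $i_n^{(u,v)}$ together through the identity on the boundary frames produces a $C^{\infty}$ area-preserving diffeomorphism of $[0,1]^2$.

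The Hamiltonian in each cell is assembled from radially symmetric quadratic rotation generators of the form $\frac{\beta_l}{2}\bigl((x-x_{s_1,s_2})^2+(y-y_{s_1,s_2})^2\bigr)$ cut off to slightly enlarged sub-squares, together with smooth interpolation Hamiltonians in the gap regions of width $\sim\varepsilon_n/(a_n c_n)$; on the plateau of each cutoff the time-$1$ flow coincides exactly with rotation around the sub-square centre, giving property~(3). The translations $\tau_{s_1,s_2,u,v}$ are chosen so that the rotated sub-squares fit inside the $1/a_n$-cell without overlap, which is possible because $\varepsilon_n$ is sufficiently small relative to $1/c_n$. Properties~(1) and~(2) follow immediately from the support conditions and the identity near cell boundaries. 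For property~(4), note that $\beta_l$ depends only on $l\bmod k_n$ and $b_n/q_n=2k_n$ is a multiple of $k_n$, so the construction repeats with period $1/(2q_n)$, hence $1/q_n$-periodicity. The norm estimate~(5) follows from a standard rescaling argument: under the affine map taking a cell to the unit square, the Hamiltonian together with all cutoffs has $C^r$-norm bounded by a constant $\tilde c_{n,k_n,r}$ independent of $q_n$ (since $\beta_l\le\pi$ while $\varepsilon_n$ and $c_n$ depend only on $n$ and $k_n$); rescaling back multiplies $r$-th derivatives by $a_n^{r-1}=(2k_n^{11})^{r-1}q_n^{r-1}$, yielding $\vertiii{i_n}_r \le c'_{n,k_n,r}\cdot q_n^{r-1}$ after absorbing the $k_n$-factors into the constant.

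The main technical obstacle is the interpolation in the gap regions between adjacent inner sub-squares, where one must smoothly transition between the two distinct affine rotation-plus-translations prescribed on neighbouring sub-squares while remaining area-preserving and uniformly $C^\infty$: a naive attempt to rotate each sub-square about its own centre produces images that either overflow the cell or overlap their neighbours. This is handled by first fixing a valid placement of the rotated sub-squares inside the cell (feasible thanks to the generous margin determined by $\varepsilon_n$), and then realising the map via a partition of unity of Hamiltonians and time-dependent Hamiltonian flow matching, following the scheme used in Proposition~\ref{prop:1a} and in \cite[Section 2.1]{Ku20}; the Faa di Bruno estimate of Lemma~\ref{le:2a} then controls the $C^r$-norms of the resulting composition.
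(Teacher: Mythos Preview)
Your overall architecture---build a model diffeomorphism on a single $\frac{1}{a_n}$-cell, tile with the rotation angle $\beta_l$ varying by column index $l$, and recover the norm bound~(5) from the conjugation by the dilation $D_{a_n}$---is exactly the paper's route, and your verification of the periodicity~(4) via $b_n/q_n=2k_n\equiv 0\bmod k_n$ and of the rescaling estimate $\vertiii{i_n}_r\le a_n^{r-1}\vertiii{\text{model}}_r=c'_{n,k_n,r}\,q_n^{r-1}$ matches the paper.

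The divergence is in how the model map on the unit cell is produced. The paper does not build an explicit Hamiltonian: it simply invokes Moser's trick through \cite[Lemma~2.3]{Ku20} to obtain an area-preserving $\psi_{c,\varepsilon,\beta}:[0,1]^2\to[0,1]^2$ that is the identity near the boundary and equals a translation-plus-rotation by $\beta$ on each inner sub-square, then sets $i_n=D_{a_n}^{-1}\circ\psi_{c_n,\varepsilon_n,\beta_l}\circ D_{a_n}$ extended equivariantly. Your explicit Hamiltonian route has a genuine gap: if $H=\chi\cdot\tfrac{\beta_l}{2}\big((x-x_0)^2+(y-y_0)^2\big)$ with plateau a ``slightly enlarged sub-square'', the time-$1$ flow agrees with rotation about $(x_0,y_0)$ only at points whose full orbit stays inside the plateau. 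A corner of the inner sub-square of side $(1-4\varepsilon_n)/(a_nc_n)$ sits at distance $\tfrac{\sqrt 2}{2}(1-4\varepsilon_n)/(a_nc_n)$ from the centre, and since $\sqrt 2\,(1-4\varepsilon_n)>1$ for the tiny $\varepsilon_n$ of the statement, that circular orbit cannot be contained in any plateau fitting inside the $1/(a_nc_n)$-sub-cell. Hence property~(3) does not follow from this Hamiltonian, and---contrary to your final paragraph---the margin governed by $\varepsilon_n$ is not ``generous'' but negligible compared with the $(\sqrt 2-1)$-overflow. Your fallback to ``time-dependent Hamiltonian flow matching following \cite[Section~2.1]{Ku20}'' is precisely the Moser-type interpolation the paper cites; once you invoke it, the intermediate explicit Hamiltonian adds nothing, and you may as well quote the model map $\psi_{c,\varepsilon,\beta}$ directly as the paper does.
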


\begin{proof}
Similar to \cite[Lemma 2.3]{Ku20}, there exist such a measure preserving diffeomorphism, $\psi_{c,\varepsilon,\beta}:[0,1]^2 \rightarrow [0,1]^2, c\in \N, \varepsilon \in (0,\frac{1}{5c}]$ and $\beta\in [0,\pi]$, is constructed with the aid of Moser's trick which satisfies following properties:
\begin{itemize}
    \item $\psi_{c,\varepsilon,\beta}$ coincides with the identity on $[0,1]^2\backslash [\varepsilon, 1-\varepsilon]^2.$
    \item On every square $\left[\frac{\mathrm{v}+2\varepsilon}{c}, \frac{\mathrm{v}+1- 2\varepsilon}{c}\right] \times \left[\frac{\mathrm{k}+2\varepsilon}{c}, \frac{\mathrm{k}+1- 2\varepsilon}{c}\right]$ with $1\leq \mathrm{v},\mathrm{k} \leq c-2$ the map $\psi_{c,\varepsilon,\beta}$ is equal to a composition of a translation and a rotation by arc $\beta$ around the centre.
\end{itemize}
Define the map $\psi_{a,c,\varepsilon, \beta}:\left[0,\frac{1}{a}\right]^2 \rightarrow \left[0,\frac{1}{a}\right]^2 $ by 
$\psi_{a,c,\varepsilon,\beta}= D_a^{-1}\circ \psi_{c,\varepsilon,\beta} \circ D_a,$ where the dilation map $D_a:\left[0,\frac{1}{a}\right]^2\rightarrow [0,1]^2$ defined by $D_a(\theta,r)= (a\cdot\theta, a\cdot r)$ for $a\in \mathbb{N}.$ Since it coincide with the identity in a neighbourhood of the boundary, we can extend it to a smooth diffeomorphism on $[0,1]^2$ equivariantly by the description 
$$\psi_{a,c,\varepsilon,\beta}\left(\theta + \frac{a_1}{a}, r+ \frac{a_2}{a}\right)= \left(\frac{a_1}{a},\frac{a_2}{a}\right)+ \psi_{a,c,\varepsilon,\beta}(\theta, r),$$ 
for $a_1,a_2 \in \mathbb{N}.$
In our concrete construction, we define $i_n$ by 
\begin{align}
 i_n = \psi_{a_n,c_n,\varepsilon_n,\beta_l},\label{eqn:3.2.3a}
\end{align}
where $a_n= 2k_n^{11}q_n, c_n = k_n^5, \varepsilon_n = \frac{1}{2n^5k_n^{10}}$ and the value of $\beta_l$ varies with the function domain, specifically within the context of the given domain: On domain $\left[\frac{l}{b_n}+\frac{\mathrm{u}}{a_n}, \frac{l}{b_n}+\frac{\mathrm{u}+1}{a_n}\right] \times [0,1],$ for all  $\mathrm{u} \in \{0,\ldots, \frac{a_n}{b_n} -1\},$
      choose the value of $\beta_l\equiv \frac{s \cdot \pi}{k_n}$ where $s\equiv l\mod k_n.$ 
Moreover, the map $\psi_{c_n,\varepsilon_n,\beta_l}$ defines the rotation by the arcs and it depends on the parameter $k_n$ but remains independent of $q_n$. Therefore, the norm of the map $i_n$ defined by $i_n = D_{a_n}^{-1} \circ \psi_{c_n,\varepsilon_n,\beta_l} \circ D_{a_n}$ can be estimated as
$$\vertiii{i_n}_r \leq a^{r-1}_n\cdot \vertiii{\psi_{c_n,\varepsilon_n,\beta_l}}_{r} \leq c_{n,k_n,r}' q_n^{r-1},$$
where $c_{n,k_n,r}'$ is a constant depending upon $n, k_n,$ and $r$, but independent of $q_n.$
\end{proof}
\remark Compared to the analogous construction in~\cite[Proposition 2.2]{Ku20}, our choice of the distinct rotation angles, denoted as $\beta_l$, has a different dependence on the specific domain. For illustration of the map $i_n$, we refer to Figure~\ref{fig:inner_rotation_B}.

\remark \label{re:5a} The action of the map $i_n$ on the elements of the partial partitions $\zeta_n$ and $\eta_n$ can be described as the composition of a rotation by an angle $\frac{u_1\cdot \pi}{k_n}$ and a translation, such that:
\begin{align}
      &i_n(\check{I}^{u_0,u_1,u_2}_{v_0}) = \check{I}^{u_0, u_1, u_2}_{v_0},\\
      &i_n(I^{u_0,u_1,u_2,u_3,u_4}_{v_0,v_1,v_2}) \subseteq \check{I}^{u_0, u_1, u_2}_{v_0},
\end{align} 
where $I^{u_0,u_1,u_2,u_3,u_4}_{v_0,v_1,v_2}\in \zeta_n$ and $\check{I}^{u_0,u_1,u_2}_{v_0}\in \eta_n$.

\subsubsection{The conjugation map \texorpdfstring{$\phi_n$}{Lg}}
We define the final conjugation map $\phi_n:[0,1]^2\rightarrow [0,1]^2$ for $u_0=0,1,\ldots, q_n-1$ as
\begin{align}\label{eqn:3.1.3a}
    \phi_n =\begin{cases} 
      i_n \circ \Tilde{\phi}_{n} & \text{on } \  \left[\frac{2u_0}{2q_n}, \frac{2u_0+1}{2q_n}\right]\times [0,1]\\
     \text{Id}  & \text{on} \  \left[\frac{2u_0+1}{2q_n}, \frac{2u_0+2}{2q_n}\right]\times [0,1],
     \end{cases}
\end{align}
where the map $\Tilde{\phi}_n$ and $i_n$ is defined by (\ref{eqn:3.1.1b}) and (\ref{eqn:3.2.3a}) respectively.
\remark Note that the map $\phi_n$ is a smooth area-preserving diffeomorphism on the 2-torus, given that both the maps $\Tilde{\phi}_n$ and $i_n$ act as identities in a neighborhood of the boundary of $[0,1]^2$. Consequently, $\phi_n$ is a well-defined Hamiltonian diffeomorphisms on the torus by Lemma \ref{lem: 2.2.3a}. For illustration of the map $\phi_n$, we refer to Figure~\ref{fig:inner_rotation_A,B}.

\begin{figure}[t!]
    \centering
    \includegraphics[width=.7\textwidth]{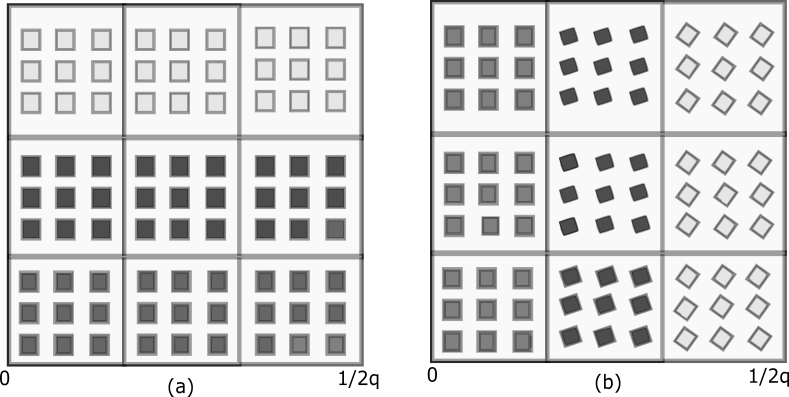}
    \caption{An example of the action of $\phi_n$ transforming from (a) to (b) on the fundamental domain  within $\T^2$.
    }
    \label{fig:inner_rotation_A,B}
\end{figure}

\section{Convergence of sequences \texorpdfstring{$<f_n>_{n \in \N}, <\hat{f_n}>_{n \in \N}$}{Lg}} \label{sec:conv}
 \subsection{Convergence of sequences \texorpdfstring{$<f_n>_{n \in \N}$}{Lg} in the smooth topology}
 There are some standard results on the closeness between the maps constructed as the conjugation of translations on the torus. The following two lemmas follow from Lemmas 3 and 4 in \cite{FSW} with minor to no modifications; hence, we skip the proofs for brevity.

\begin{lemma}[{\cite[Lemma 3]{FSW}}] \label{la:1a}
Let $r\in \mathbb{N}$. For all $\alpha,\beta \in \mathbb{R}$ and all $h\in \textup{Diff}^{\infty}(\mathbb{T}^2,\mu)$, we have the estimate
$$d_r(h S_{\alpha}h^{-1},h S_{\beta}h^{-1}) \leq C_r \max\{\vertiii{h}_{r+1},\vertiii{h^{-1}}_{r+1}\}|\alpha- \beta|,$$
where $C_r$ is a constant that depends only on $r$. 
\end{lemma}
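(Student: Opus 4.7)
The plan is to reduce the difference to a parameter integral via the fundamental theorem of calculus and then control higher-order derivatives by Fa\`a di Bruno's formula. Writing $e_1=(1,0)$ so that $S_t(y)=y+te_1$, for every $x\in \T^2$ one has
\begin{align*}
hS_\alpha h^{-1}(x)-hS_\beta h^{-1}(x) &= h(h^{-1}(x)+\alpha e_1)-h(h^{-1}(x)+\beta e_1) \\
&= \int_\beta^\alpha \partial_1 h(h^{-1}(x)+te_1)\,dt.
\end{align*}
This immediately yields the $C^0$-estimate $\tilde d_0(hS_\alpha h^{-1},hS_\beta h^{-1})\leq \|\partial_1 h\|_0|\alpha-\beta|\leq \vertiii{h}_1|\alpha-\beta|$.

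For the higher-order norms, I set $\Psi_t(y):=\partial_1 h(y+te_1)$, so that the integrand equals $\Psi_t\circ h^{-1}(x)$, and for every multi-index $a$ with $|a|\leq r$,
\begin{align*}
D_a\bigl[hS_\alpha h^{-1}-hS_\beta h^{-1}\bigr](x) \;=\; \int_\beta^\alpha D_a\bigl[\Psi_t\circ h^{-1}\bigr](x)\,dt.
\end{align*}
I would then invoke the multivariate Fa\`a di Bruno formula to expand $D_a[\Psi_t\circ h^{-1}]$ as a finite sum, whose length and combinatorial coefficients depend only on $r$, of products of partial derivatives of $\Psi_t$ of order at most $|a|$ -- equivalently, partial derivatives of $h$ of order at most $r+1$ -- evaluated at $h^{-1}(x)+te_1$, multiplied against products of partial derivatives of $h^{-1}$ of order at most $|a|\leq r$ evaluated at $x$. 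Every factor is bounded by $\max\{\vertiii{h}_{r+1},\vertiii{h^{-1}}_{r+1}\}$. Taking the $C^0$ norm and integrating in $t$ over an interval of length $|\alpha-\beta|$ produces
\begin{align*}
\bigl\|D_a\bigl[hS_\alpha h^{-1}-hS_\beta h^{-1}\bigr]\bigr\|_0 \;\leq\; C_r\,\max\{\vertiii{h}_{r+1},\vertiii{h^{-1}}_{r+1}\}\,|\alpha-\beta|,
\end{align*}
where $C_r$ is a combinatorial constant depending only on $r$ that absorbs the number of terms and the polynomial factors arising from the Fa\`a di Bruno expansion.

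To finish, I would treat the inverse diffeomorphisms using the identity $(hS_\gamma h^{-1})^{-1}=hS_{-\gamma}h^{-1}$, so the same integral representation and Fa\`a di Bruno bound apply verbatim with $\alpha,\beta$ replaced by $-\alpha,-\beta$, yielding the analogous estimate on $\tilde d_r((hS_\alpha h^{-1})^{-1},(hS_\beta h^{-1})^{-1})$. Taking the maximum of the two estimates and invoking the definition of $d_r$ delivers the claim. The main obstacle is the combinatorial bookkeeping in Fa\`a di Bruno and the careful absorption of polynomial dependencies on lower-order norms into the constant $C_r$; since this is exactly the content of \cite[Lemma 3]{FSW}, I would import their bookkeeping rather than reproduce it in detail.
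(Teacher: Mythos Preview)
Your approach --- integral representation in $t$ followed by Fa\`a di Bruno --- is exactly the argument behind \cite[Lemma~3]{FSW}, which the paper cites in lieu of a proof, so the strategy is correct and matches the paper's (implicit) route.

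One point needs sharpening, though. The Fa\`a di Bruno expansion of $D_a[\Psi_t\circ h^{-1}]$ with $|a|\le r$ produces sums of products of up to $|a|+1$ factors: one derivative of $h$ of order at most $r+1$, multiplied against several derivatives of $h^{-1}$ whose orders sum to $|a|$. Bounding each factor by $\max\{\vertiii{h}_{r+1},\vertiii{h^{-1}}_{r+1}\}$ therefore gives
\[
\bigl\|D_a\bigl[hS_\alpha h^{-1}-hS_\beta h^{-1}\bigr]\bigr\|_0 \;\leq\; C_r\,\bigl(\max\{\vertiii{h}_{r+1},\vertiii{h^{-1}}_{r+1}\}\bigr)^{r+1}\,|\alpha-\beta|,
\]
a polynomial rather than linear dependence on the norm. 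You cannot absorb these powers of the norm into a constant depending only on $r$, so your sentence ``absorbs \dots\ the polynomial factors'' is the gap. The lemma as printed appears to have dropped the exponent; the paper's own application in equation~\eqref{eqn:5.5}, where the denominator carries $\vertiii{H_n}_{r_n+1}^{r_n+1}$, confirms that the polynomial form is what is actually needed and used.
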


\begin{lemma}[{\cite[Lemma 4]{FSW}}]  \label{la:1b}
For any $\epsilon>0,$ let $r_n$ be a sequence of natural numbers satisfying $\sum\limits_{n=1}^{\infty}\frac{1}{r_n}<\epsilon$. Suppose for any Liouville number $\alpha$ that there exists a sequence of rationals $\{\alpha_n\}$ satisfying
\begin{equation}\label{eqn:5.5}
    |\alpha-\alpha_n|<\frac{1}{2^{n+1}r_n\hat{C}_{r_n}q_n \vertiii{H_n}_{r_{n+1}}^{r_{n+1}}},
\end{equation}
where $\hat{C}_{r_n}$ is the same constant as in Lemma \ref{la:1a}. Then
the sequence of diffeomorphisms  $f_{n} = H_{n} \circ R_{\alpha_{n+1}}\circ H_{n}^{-1}$ converges to  $f\in \textup{Diff}^{\infty}(\mathbb{T}^2, \mu)$ in the $C^{\infty}$ topology. Moreover, for any $m\leq q_{n+1},$ we have 
\begin{equation}\label{eq:4a}
d_0(f^m,f_{n}^m) \leq \frac{1}{2^{n+1}}, \  \text{and} \  
\sup_{x\in \mathbb{P} \mathrm{T}M}d(({f}_n,\mathrm{d}{f}_n)^m(x), ({f},\mathrm{d}{f})^m(x))\leq \frac{1}{2^{n+1}}.
\end{equation} 
\end{lemma}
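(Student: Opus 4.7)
The plan is to exploit the commutation relation $h_n\circ R_{\alpha_n}=R_{\alpha_n}\circ h_n$ to express both $f_n$ and $f_{n-1}$ as conjugates of rotations by the \emph{same} diffeomorphism $H_n$. Since $H_n=H_{n-1}\circ h_n$, the commutation gives
\[
f_{n-1}=H_{n-1}\circ R_{\alpha_n}\circ H_{n-1}^{-1}=H_n\circ(h_n^{-1}\circ R_{\alpha_n}\circ h_n)\circ H_n^{-1}=H_n\circ R_{\alpha_n}\circ H_n^{-1},
\]
so both iterates are rotations conjugated by $H_n$. Applying Lemma~\ref{la:1a} at level $r=r_n$ yields
\[
d_{r_n}(f_n,f_{n-1})\leq \hat C_{r_n}\max\{\vertiii{H_n}_{r_n+1},\vertiii{H_n^{-1}}_{r_n+1}\}\,|\alpha_{n+1}-\alpha_n|.
\]
Bounding $|\alpha_{n+1}-\alpha_n|\leq 2|\alpha-\alpha_n|$ and inserting \eqref{eqn:5.5} (together with the mild monotonicity $\vertiii{H_n}_{r_n+1}^{r_n+1}\leq\vertiii{H_n}_{r_{n+1}}^{r_{n+1}}$, which is free once $r_{n+1}\geq r_n+1$), this gives $d_{r_n}(f_n,f_{n-1})\leq 2^{-n}/r_n$. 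Since any fixed $r$ is eventually majorised by $r_n$ and $\sum 1/r_n<\epsilon$, the sequence $(f_n)$ is Cauchy in every $C^r$-norm and converges to some $f\in\textup{Diff}^\infty(\mathbb T^2,\mu)$.

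The iterate statement I would handle by the same rewriting applied to powers. Because the rotations commute both with each other and with $h_{k+1}$, one has
\[
f_k^m=H_{k+1}\circ R_{m\alpha_{k+1}}\circ H_{k+1}^{-1},\qquad f_{k+1}^m=H_{k+1}\circ R_{m\alpha_{k+2}}\circ H_{k+1}^{-1},
\]
so Lemma~\ref{la:1a} at level $r=1$ gives $d_1(f_k^m,f_{k+1}^m)\leq \hat C_1\,\vertiii{H_{k+1}}_2^2\cdot m\,|\alpha_{k+2}-\alpha_{k+1}|$. For $m\leq q_{n+1}\leq q_{k+1}$ and $k\geq n$, the product $m\,|\alpha_{k+2}-\alpha_{k+1}|$ is absorbed by the factor $q_{k+1}\vertiii{H_{k+1}}_{r_{k+2}}^{r_{k+2}}$ in the denominator of \eqref{eqn:5.5}, leaving a tail $\sum_{k\geq n}2^{-(k+2)}\leq 2^{-(n+1)}$. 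This delivers both $d_0(f^m,f_n^m)\leq 2^{-(n+1)}$ and the sup estimate on the projectivized derivative extension, since $C^1$-closeness of the iterates dominates the uniform distance on $\mathbb{P}\mathrm{T}M$ in local coordinates of the projective bundle.

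The main technical delicacy will be the simultaneous bookkeeping of the three parameters $r_n$, $q_n$, and $\vertiii{H_n}_{r_{n+1}}^{r_{n+1}}$ in \eqref{eqn:5.5}: the single condition must produce $C^r$-summable tails for every fixed $r$ and simultaneously kill the iteration count $m\leq q_{n+1}$ at the $C^1$-level. This is reconciled by the standard AbC freedom to impose $q_{n+1}$ only \emph{after} $H_n$ and $r_n$ have been fixed (condition~\ref{item:P2} plays exactly this role above), so that \eqref{eqn:5.5} becomes a defining constraint on the choice of $\alpha_{n+1}=p_{n+1}/q_{n+1}$ rather than an a priori restriction on $\alpha$. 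With these ingredients the argument reduces to the one in \cite{FSW} essentially verbatim.
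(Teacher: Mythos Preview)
Your proposal is correct and follows the standard AbC convergence argument that the paper is invoking; the paper itself gives no proof here but defers to \cite[Lemma~4]{FSW}, and what you have sketched---rewriting $f_{n-1}$ and $f_n$ (and their iterates) as conjugates of rotations by the \emph{same} $H_n$ via the commutation $h_n\circ R_{\alpha_n}=R_{\alpha_n}\circ h_n$, then applying Lemma~\ref{la:1a} and summing the resulting tail---is precisely that argument. Your remark that $C^1$-closeness of the iterates controls the uniform distance on $\mathbb{P}\mathrm{T}M$ is the only addition needed beyond \cite{FSW}, and it is the ``minor modification'' the paper alludes to.
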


\begin{lemma} \label{lem:4.4.1a}
For any ${r_n}\in \mathbb{N},$ the conjugation diffeomorphisms constructed in Sections~\ref{sec:3.3.1} and~\ref{sec:phi} satisfy the following statements:
\begin{enumerate}
    \item $\vertiii{h_n}_{r_n}\leq c_{n,k_n,r_n}\cdot q_n^{2{r^3_n}},$ where $c_{n,k_n,r_n}$ is constant independent of $q_n.$ 
    \item $\vertiii{H_n}_{r_n} \leq \hat{c}_{n,k_n,r_n}\cdot q_n^{2{r^4_n}}$, where $\hat{c}_{n,k_n,r_n}$ is constant independent of $q_n.$
    \item For $\alpha$ Liouville, there exists a sequence of rational $\{\alpha_n\}_{n\in \N}$ satisfying  {\eqref{eqn:5.5}} and \ref{item:P1}-\ref{item:P2}. 
\end{enumerate}
\end{lemma}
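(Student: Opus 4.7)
The plan is to prove (1) by composing the individual norm bounds through Lemma~\ref{le:2a}, to derive (2) by one further application of that lemma exploiting that $\vertiii{H_{n-1}}_{r_n}$ is determined at previous stages and hence independent of $q_n$, and to obtain (3) by choosing $q_{n+1}$ large enough to meet both \eqref{eqn:5.5} and \ref{item:P1}--\ref{item:P2}, which is possible precisely because $\alpha$ is Liouville.

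For part (1), recall from \eqref{eq:3:3b} and \eqref{eqn:3.1.3a} that $h_n = g_n \circ \phi_n$ with $\phi_n = i_n \circ \tilde{\phi}_n$ on half of the fundamental domain and the identity on the other half. Proposition~\ref{lemma:3a}(3) yields $\vertiii{g_n}_{r_n} \leq c_{n,k_n,r_n}\, q_n^{r_n}$, Proposition~\ref{prop:1a}(5) combined with the choice \eqref{eqn:3.1.1b} gives $\vertiii{\tilde{\phi}_n}_{r_n} \leq c_{n,k_n,r_n}\, q_n^{r_n}$, and Proposition~\ref{prop:2a}(5) gives $\vertiii{i_n}_{r_n} \leq c'_{n,k_n,r_n}\, q_n^{r_n-1}$. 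The gain $q_n^{r_n-1}$ for $i_n$, rather than $q_n^{r_n}$, is the crucial ingredient that keeps the final exponent at $2r_n^{3}$. Applying Lemma~\ref{le:2a} on the non-trivial half and taking the maximum with the identity piece, we obtain
\begin{equation*}
\vertiii{\phi_n}_{r_n} \leq C\, \vertiii{i_n}_{r_n}^{r_n}\, \vertiii{\tilde{\phi}_n}_{r_n}^{r_n} \leq c_{n,k_n,r_n}\, q_n^{r_n(r_n-1)+r_n^{2}} = c_{n,k_n,r_n}\, q_n^{2r_n^{2}-r_n},
\end{equation*}
and a second application of Lemma~\ref{le:2a} then gives
\begin{equation*}
\vertiii{h_n}_{r_n} \leq C\, \vertiii{g_n}_{r_n}^{r_n}\, \vertiii{\phi_n}_{r_n}^{r_n} \leq c_{n,k_n,r_n}\, q_n^{r_n^{2}+r_n(2r_n^{2}-r_n)} = c_{n,k_n,r_n}\, q_n^{2r_n^{3}}.
\end{equation*}

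For (2), I would write $H_n = H_{n-1} \circ h_n$ and apply Lemma~\ref{le:2a} once more together with part~(1) to get
\begin{equation*}
\vertiii{H_n}_{r_n} \leq C\, \vertiii{H_{n-1}}_{r_n}^{r_n}\, \vertiii{h_n}_{r_n}^{r_n} \leq \hat{c}_{n,k_n,r_n}\, q_n^{2r_n^{4}},
\end{equation*}
where the key observation is that $H_{n-1}$ is built from $h_1,\ldots,h_{n-1}$ fixed at previous stages, so $\vertiii{H_{n-1}}_{r_n}$ is a finite quantity that depends only on $q_1,\ldots,q_{n-1}$ and $k_1,\ldots,k_{n-1}$ and is absorbed into $\hat{c}_{n,k_n,r_n}$; in particular, it carries no power of $q_n$.

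For (3), once $H_{n-1}$ is fixed we first choose $k_n \geq n^{5}$ large enough so that \ref{item:P1} and \ref{item:P0} hold, using the uniform continuity of $(H_{n-1}, \mathrm{d} H_{n-1})$ on the compact manifold. With $k_n$ and $l_n$ prescribed, conditions \ref{item:P2} and \ref{item:P3} reduce to a lower bound $q_{n+1} > N_n(H_{n-1}, k_n, l_n, q_n)$, while by part~(2) the right-hand side of \eqref{eqn:5.5} is a strictly positive number $\varepsilon_n$ depending on $q_n$, $k_n$, $l_n$, and $H_n$ but independent of the yet-to-be-chosen $q_{n+1}$. Since $\alpha$ is Liouville, for arbitrarily large exponent $M$ and arbitrarily large denominator $q$ there exist rational approximants $p/q$ with $|\alpha - p/q| < 1/q^{M}$; choosing $M$ sufficiently large and $q_{n+1} > N_n$ produces $\alpha_{n+1} = p_{n+1}/q_{n+1}$ with $|\alpha - \alpha_{n+1}| < \varepsilon_n$, satisfying all the required conditions simultaneously. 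The main (mild) technical obstacle throughout is the bookkeeping of exponents in part~(1): the exponent $2r_n^{3}$ is obtained on the nose and relies on the sharp bound $\vertiii{i_n}_{r_n} \leq c'_{n,k_n,r_n}\, q_n^{r_n-1}$ from Proposition~\ref{prop:2a}(5); any weaker estimate for $i_n$ would inflate the exponent in (1) and propagate to the bound in (2).
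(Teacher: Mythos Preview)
Your proof is correct and follows essentially the same approach as the paper's, applying Lemma~\ref{le:2a} iteratively together with the individual bounds from Propositions~\ref{lemma:3a}, \ref{prop:1a}, and~\ref{prop:2a}, and then invoking the Liouville property for part~(3). Your exponent tracking in~(1) is in fact cleaner than the paper's displayed chain, and the only cosmetic slip is in~(3): condition~\eqref{eqn:5.5} at level $n$ involves $\vertiii{H_n}_{r_n+1}$ and hence a polynomial in $q_n$, so it is $q_n$ itself (not $q_{n+1}$) that one selects via the Liouville approximation, exactly as the paper does.
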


\begin{proof}
Using Lemma \ref{le:2a} and the norm estimates of conjugation maps ${\phi_n}$ and $g_n,$ defined in Proposition \ref{prop:1a}, \ref{prop:2a} and \ref{lemma:3a}, we have estimate:
\begin{align}
    \vertiii{h_n}_{r_n} &\leq \vertiii{g_n}_{r_n}^{r_n}\cdot \vertiii{\phi_n}_{r_n}^{r_n}  \nonumber\\
    &\leq \tilde{c}_{n,k_n,r_n}\cdot q_n^{{r^2_n}}\cdot \vertiii{i_n}_{r_n}^{r_n} \cdot \vertiii{\tilde{\phi}_n}_{r_n}^{r_n} \leq c_{n,k_n,r_n}\cdot q_n^{2{r^3_n}}.\nonumber
\end{align}
Similarly, $\vertiii{H_n}_{r_n}= \vertiii{H_{n-1}\circ h_n}_{r_n} \leq \vertiii{H_{n-1}}_{r_n}^{r_n} \cdot \vertiii{h_n}_{r_n}^{r_n} $. Since all order derivatives of $H_{n-1}$ are independent of $q_n$, we can conclude that
$\vertiii{H_{n}}_{r_n} \leq  \hat{c}_{n,k_n,r_n}\cdot q_n^{2{r^4_n}}.$\\
For $\alpha$ being a Liouville, we can choose a sequence of rationals $\alpha_n=\frac{p_n}{q_n}$ (with $p_n, q_n$ relatively prime) that satisfy the following property, along with $\ref{item:P1}-\ref{item:P2}$: 
$$|\alpha-\alpha_n| \leq \frac{1}{2^{n+1} \cdot {r_n} \cdot \hat{c}_{n,k_n,r_n} \cdot q_n^{2(r_n+1)^5 +1}} \nonumber\\
\leq \frac{1}{2^{n+1}\cdot {r_n} \cdot \hat{C}_{r_n}\cdot q_n \cdot \vertiii{H_n}_{{r_n}+1}^{{r_n}+1}},$$
where $\hat{C}_{r_n}$ is constant depends only on $r_n.$
\end{proof}

\remark Finally, we have proven the estimate on the norms of the conjugation map $H_n$, as shown in \cite{FS}. Additionally, the existence of rationals satisfying (\ref{eqn:5.5}) guarantees the convergence of the sequence $f_n$ to $f\in \textup{Diff}^{\infty}(\T^2,\mu)$, as stated in Lemma {\ref{la:1b}}. 

\subsection{Convergence of sequence \texorpdfstring{$<\hat{f}_n>_{n \in \N}$}{Lg} in the analytic topology}

\begin{lemma}\label{lem:4:4.2b}
 There exists a sequence of rational numbers $\alpha_{n+1}$ such that the sequence of diffeomorphisms $\hat{f}_n \in \textup{Diff}_{\infty}^{\omega}(\T^2,\mu)$, defined by (\ref{eqn:4:4.1a}), converges to a diffeomorphism $\hat{f} \in \textup{Diff}_{\infty}^{\omega}(\T^2,\mu)$. Moreover, it satisfies the following condition for any natural number $m \leq q_{n+1}$,
\begin{enumerate}
    \item   $d_{0}((\hat{f}_n)^m,(\hat{f})^m)< \frac{1}{2^n},$
    \item $\sup_{x\in \mathbb{P} \mathrm{T}M}d((\hat{f}_n,\mathrm{d}\hat{f}_n)^m(x), (\hat{f},\mathrm{d}\hat{f})^m(x))< \frac{1}{2^n} < \frac{1}{n^2}.$
\end{enumerate}
\end{lemma}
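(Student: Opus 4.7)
The plan is to mimic the smooth Cauchy-sequence argument of Lemma~\ref{la:1b}, replacing each smooth norm estimate with its analytic counterpart on a complex strip $\Omega_\rho$, and then to transfer $d_\rho^\omega$-closeness to $C^1$-closeness of the projectivized derivative via Cauchy integral estimates. Since $\hat{h}_n$ is $(1/q_n,0)$-periodic by Corollary~\ref{coro:4:4.1}, it commutes with $R_{\alpha_n}$, hence $\hat{f}_{n-1}=\hat{H}_n\circ R_{\alpha_n}\circ \hat{H}_n^{-1}$ and
\begin{align*}
\hat{f}_{n-1}^{m}=\hat{H}_n\circ R_{m\alpha_n}\circ \hat{H}_n^{-1},\qquad \hat{f}_n^{m}=\hat{H}_n\circ R_{m\alpha_{n+1}}\circ \hat{H}_n^{-1}.
\end{align*}
A complex mean value theorem applied to the outer factor $\hat{H}_n$ on $\Omega_\rho$ then yields the analytic analog of Lemma~\ref{la:1a}:
\begin{align*}
d_\rho^\omega\!\left(\hat{f}_{n-1}^{m},\hat{f}_n^{m}\right)\leq |D\hat{H}_n|_\rho\cdot m\cdot|\alpha_n-\alpha_{n+1}|.
\end{align*}

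With $\alpha_{n+1}=\alpha_n+1/(k_n l_n q_n^2)$ from Section~\ref{sec:4:4.a} and $q_{n+1}\mid k_n l_n q_n^2$, I would telescope on any fixed $\rho>0$ for $m\leq q_{n+1}$:
\begin{align*}
d_\rho^\omega\!\left(\hat{f}_n^m,\hat{f}_{n+K}^m\right)\leq \sum_{j=1}^{K}|D\hat{H}_{n+j}|_\rho\cdot\frac{q_{n+1}}{k_{n+j}\,l_{n+j}\,q_{n+j}^{2}}.
\end{align*}
Because each $\hat{H}_{n+j}$ is entire (as a composition of maps from $\textup{Ham}^\omega_\infty(\T^2,\mu)$), the norm $|D\hat{H}_{n+j}|_\rho$ is finite for every $\rho$. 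I would pick $l_{n+j}$ once $\hat{H}_{n+j}$ is fixed, large enough to ensure \ref{item:P1}--\ref{item:P2} and, additionally,
\begin{align*}
|D\hat{H}_{n+j}|_{\rho_{n+j}}\cdot\frac{1}{k_{n+j}l_{n+j}q_{n+j}}<\frac{1}{2^{n+j+1}}
\end{align*}
for a prescribed sequence $\rho_n\nearrow\infty$. Telescoping yields $d_{\rho_n}^\omega(\hat{f}_n^m,\hat{f}_{n+K}^m)<1/2^n$ for all $K$, so $(\hat{f}_n^m)_n$ is Cauchy in $\textup{Diff}_\rho^\omega(\T^2,\mu)$ for every fixed $\rho>0$. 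The limit thus admits a lift holomorphic on every $\Omega_\rho$, i.e.\ entire on $\C^2$, placing $\hat{f}\in\textup{Diff}_\infty^\omega(\T^2,\mu)$; taking $\rho$ small enough that $|\cdot|_\rho$ dominates $d_0$ gives statement (1).

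For (2) I would invoke Cauchy's estimate: on $\Omega_{\rho/2}$, the $C^1$ distance of two holomorphic maps is controlled by $(2/\rho)$ times their $C^0$ distance on $\Omega_\rho$. Applied to $\hat{f}_n^m$ and $\hat{f}^m$, this converts the $d_0$-bound of (1) into a $d_1$-bound, with the loss absorbed by slightly enlarging $l_n$ at each stage. Since the projectivized tangent map $(x,[v])\mapsto(\hat{f}(x),[\mathrm{d}\hat{f}(v)])$ is a smooth function of the $1$-jet of $\hat{f}$, the $d_1$-bound on iterates directly controls distances in $\mathbb{P}\mathrm{T}\T^2$, yielding (2). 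The principal obstacle is coordinating the inductive choices so the limit is entire rather than only analytic on a single strip; this is overcome by the fact that at stage $n+j$ the map $\hat{H}_{n+j}$ has already been fixed and $|D\hat{H}_{n+j}|_\rho$ is finite for every $\rho$, permitting a diagonal choice that handles a sequence of radii $\rho_n\to\infty$.
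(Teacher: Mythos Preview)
Your approach coincides with the paper's: both use the commutation $\hat h_n\circ R_{\alpha_n}=R_{\alpha_n}\circ\hat h_n$ to write $\hat f_{n-1}=\hat H_n\circ R_{\alpha_n}\circ\hat H_n^{-1}$, the Lipschitz bound $d_\rho^\omega(\hat f_{n-1},\hat f_n)\le |D\hat H_n|_\rho\,|\alpha_{n+1}-\alpha_n|$, an increasing sequence of radii (the paper takes $\rho_n=n$) so that the Cauchy limit is entire, and then telescope over $n$ for the iterates, absorbing all constants by choosing $l_n$ large once the level-$n$ conjugation data are in place. The only cosmetic difference is that you spell out the Cauchy-integral step converting $d_\rho^\omega$-closeness to $C^1$-closeness for item~(2), whereas the paper simply records the corresponding $\mathbb{P}\mathrm{T}M$ estimate as a consequence of the same choice of $l_n$.
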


\begin{proof}
Let us fix the sequence $\rho_n = n \in \N$ used for the complex domain $\Omega_{\rho_n}$ as defined by (\ref{eqn:2:2.3a}). Note that the sequence of diffeomorphisms $\hat{f}_n$, defined by (\ref{eqn:4:4.1a}), admits an entire complexification. Thus, each $n$th term of the sequence, $\hat{f}_n\in \textup{Diff}_{\rho_n}^{\omega}(\T^2,\mu)$, where $\rho_n = n$. With the relations $\alpha_{n+1}= \alpha_n + \frac{1}{k_nl_nq_n^2}$ and $\hat{h}_n\circ R_{\alpha_{n}} = R_{\alpha_{n}}\circ \hat{h}_n$, we can obtain for $\rho_n=n :$ 
\begin{align}
    d_{n}^{\omega}(\hat{f}_{n},\hat{f}_{n-1}) &= d_{n}^{\omega}(\hat{H_n}\circ R_{\alpha_{n+1}}\circ \hat{H}_n^{-1}, \hat{H}_{n-1}{\hat{h}_{n}}\circ{\hat{h}_{n}^{-1}} \circ R_{\alpha_{n}}\circ \hat{H}_{n-1}^{-1}) \nonumber\\
    &= d_{n}^{\omega}(\hat{H_n}\circ R_{\alpha_{n+1}}\circ \hat{H}_n^{-1}, \hat{H}_{n-1}{\hat{h}_{n}} \circ R_{\alpha_{n}}\circ {\hat{h}_{n}^{-1}}\circ \hat{H}_{n-1}^{-1})\nonumber\\
    &\leq |\mathrm{d}\hat{H}_n|_n \cdot |\alpha_{n+1} - \alpha_n|
     \leq |\mathrm{d}\hat{H}_n|_n\cdot \frac{1}{k_nl_nq_n^2} \leq \frac{1}{2^n}. \label{eqn:4:4.2a}
     \end{align}
    Since the construction of $\hat{h}_n = \hat{g}_n \circ \hat{\phi}_n$ does not depend on the parameter $l_n$, the last step can be achieved by choosing sufficiently large $l_n \in \mathbb{N}$ and $k_n \in \mathbb{N}$, while satisfying the additional conditions $\ref{item:P1}- \ref{item:P2}$ stated in Subsection \ref{sec:3.3.1}. These choices ensure that the following estimates hold: 
$$l_n > n^2\cdot k_n^{10} \cdot q_n^2 \cdot\|\mathrm{d}H_{n-1}\|_0, \quad \text{and} \quad \log(q_{n+1}) = \log(k_n\cdot l_n \cdot q_n^2)> \|\mathrm{d}\hat{H}_n\|,$$
     and for any natural number $m \leq q_{n+1},$  the 
 following inequality holds:
     \begin{align} \label{eqn:closeness_f_n}
    \sup_{x\in \mathbb{P} \mathrm{T}M}d((\hat{f}_n,\mathrm{d}\hat{f}_n)^m(x), (\hat{f}_{n-1},\mathrm{d}\hat{f}_{n-1})^m(x))< \frac{1}{2\cdot k_n} < \frac{1}{2^n}.     
\end{align}
Based on the estimates (\ref{eqn:4:4.2a}), we can conclude that for any $\varepsilon > 0$, there exists an $N \in \mathbb{N}$ such that $\sum_{i=N}^{\infty} \frac{1}{2^i} < \varepsilon,$ and for any natural numbers $m, n > N$, and $m \leq n$: 
\begin{align}\label{eqn:4:4:2a}
 d_{m}^{\omega}(\hat{f}_{n},\hat{f}_{m}) \leq \sum_{i=m+1}^{n} d_{m}^{\omega}(\hat{f}_{i},\hat{f}_{i-1}) \leq \sum_{i=m+1}^{n} d_{i}^{\omega}(\hat{f}_{i},\hat{f}_{i-1}) \leq \sum_{i=m+1}^{n} \frac{1}{2^i} < \varepsilon.
\end{align}
Analogously, we have $d_{k}^{\omega}(\hat{f}_{n},\hat{f}_{m})< d_{m}^{\omega}(\hat{f}_{n},\hat{f}_{m}) < \varepsilon,$  for any $k<m\leq n.$  
This shows that the sequence $(\hat{f}_n)_{n\in \N}$ is a Cauchy sequence in $\textup{Diff}_{k}^{\omega}(\T^2,\mu) $ for all $k \leq n$. As $\textup{Diff}_{k}^{\omega}(\T^2,\mu)$ is a complete space, there exists a limit diffeomorphism $\hat{f}\in \textup{Diff}_{k}^{\omega}(\T^2,\mu)$ for all $k\leq n$. Furthermore, the estimate (\ref{eqn:4:4:2a}) guarantees $\hat{f} \in \cap_{i=1}^{n}\textup{Diff}_{i}^{\omega}(\T^2,\mu)$.
 Additionally, the sequence $\{\rho_n\}_{n\in \N}$ expands the complex domain of analyticity, $\{\Omega_{\rho_n}\}_{n\in \N},$  and exhausts all $\C^2$. Consequently, we can conclude that the limit diffeomorphism, $\lim_{n\rightarrow \infty}\hat{f}_n = \hat{f},$ admits an entire complexification, i.e.
$\hat{f} \in \textup{Diff}_{\infty}^{\omega}(\T^2,\mu).$

Moreover, we can conclude that for any natural number $m\leq q_{n+1}:$
\begin{align}\label{eqn:5:5.2a}
d_0((\hat{f}_n)^m, (\hat{f})^m) \leq \sum_{i=n+1}^{\infty} d_0((\hat{f}_i)^m, (\hat{f}_{i-1})^m) \leq \sum_{i=n+1}^{\infty} \frac{1}{2^i} < \frac{1}{2^n},
\end{align}
and the sequence $(\hat{f}_n, \mathrm{d}\hat{f}_n)$ defined on $\mathbb{P} \mathrm{T}M,$ using equation (\ref{eqn:closeness_f_n}),  also satisfies :
\begin{align}\label{eqn:5:5.2b}
   \sup_{x\in \mathbb{P} \mathrm{T}M}d((\hat{f}_n,\mathrm{d}\hat{f}_n)^m(x), (\hat{f},\mathrm{d}\hat{f})^m(x)) &\leq \sum_{i=n+1}^{\infty} \sup_{x\in \mathbb{P} \mathrm{T}M}d((\hat{f}_i,\mathrm{d}\hat{f}_i)^m(x), (\hat{f}_{i-1},\mathrm{d}\hat{f}_{i-1})^m(x)) \nonumber\\
   & \leq  \sum_{i=n+1}^{\infty} \frac{1}{2\cdot k_i} <  \sum_{i=n+1}^{\infty}\frac{1}{2^i} < \frac{1}{2^n}.  
\end{align}
Altogether, this shows that there exists $\hat{f}\in \textup{Diff}_{\infty}^{\omega}(\T^2,\mu)$ such that $\lim_{n\rightarrow \infty} \hat{f}_n = \hat{f}$, and it satisfies both the estimates of the lemma given by (\ref{eqn:5:5.2a}) and (\ref{eqn:5:5.2b}).
\end{proof}

 \section{Existence of an invariant measurable Riemannian metric}\label{sec:metric}
Next, we show that our conjugation maps $h_n$ and $\hat{h}_n$ fulfill the requirements of Proposition~\ref{prop:metric}.

\begin{lemma} \label{lem:deviation}
    Let the partial partition $\zeta_n$ be defined as in section~\ref{subsubsec:zeta}. The maps $h_n$ and $\hat{h}_n$ given by (\ref{eq:3:3b}) and (\ref{eqn:4:4.1b}), respectively, satisfy the following for all $I_n \in {\zeta}_n$:
    \begin{enumerate} 
        \item $\text{dev}_{I_n}(h_n) =0$;
        \item $\text{dev}_{I_n}(\hat{h}_n) \leq \frac{2\mathfrak{d}_n}{\|DH_{n-1}\|_{0}^2}$ with $\mathfrak{d}_n$ as in equation (\ref{eqn:4:4.2}).
        \end{enumerate}
\end{lemma}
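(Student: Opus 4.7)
The plan is to handle the two parts separately, with part~(1) being the bookkeeping on which sub-cuboids of the torus support isometric behavior, and part~(2) a chain-rule perturbation argument built on top of part~(1).

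For part~(1), I would fix $I_n = I^{u_0,u_1,u_2,u_3,u_4}_{v_0,v_1,v_2} \in \zeta_n$ as defined in~\eqref{eqn:2.2a} and split into the two cases dictated by the definition~\eqref{eqn:3.1.3a} of $\phi_n$. When $u_0$ is odd, $\phi_n|_{I_n} = \mathrm{id}$, and the safety margin $\frac{1}{2n^5 k_n^{15}}$ in the $r$-coordinate together with $v_0 \in \{1,\dots,k_n^5-2\}$ guarantees $I_n \subset \mathcal{G}_n$ (the good domain~\eqref{eqn:gooddomain_g_n} of $g_n$), so $h_n|_{I_n} = g_n|_{I_n}$ is a pure translation by Proposition~\ref{lemma:3a}(2) and hence an isometry. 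When $u_0$ is even, $\phi_n = i_n \circ \tilde{\phi}_n$. Here the offsets $\tfrac{1}{4n^5k_n^{26}q_n}$ and the index ranges in~\eqref{eqn:2.2a} have been chosen exactly so that $I_n$ sits strictly inside one of the isometry cuboids from Proposition~\ref{prop:1a}(4), that $\tilde{\phi}_n(I_n)$ sits strictly inside one of the translation-plus-rotation squares from Proposition~\ref{prop:2a}(3), and that $i_n\circ\tilde{\phi}_n(I_n) \subset \mathcal{G}_n$. Composing three rigid motions gives $h_n|_{I_n}$ as an isometry, so $\text{dev}_{I_n}(h_n)=0$ by Remark~\ref{rem:dev}.

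For part~(2), I would apply the chain rule to $\hat{h}_n = \hat{g}_n \circ \hat{\phi}_n$ and $h_n = g_n \circ \phi_n$, using the three approximation bounds at the end of Section~\ref{sec:4:4.a}. For $p\in \T^2$ and unit $v$, splitting through an intermediate term,
\begin{align*}
\lvert d_p\hat{h}_n(v) - d_p h_n(v)\rvert
&\leq \epsilon_n\,\|d_p\hat{\phi}_n(v)\| + \vertiii{g_n}_2\, d_0(\hat{\phi}_n,\phi_n)\,\|d_p\hat{\phi}_n(v)\| + \vertiii{g_n}_1\, d_1(\hat{\phi}_n,\phi_n)\\
&\leq C\cdot \vertiii{g_n}_2 \cdot (2\vertiii{\phi_n}_2 + 1)\cdot \epsilon_n.
\end{align*}
By Proposition~\ref{lemma:3a}(3), $\vertiii{g_n}_2 \leq c_{n,k_n,2}q_n^2$, and absorbing the $n$-dependent constant into $k_n^8$ (using $k_n\geq n^5$ from~\ref{item:P1}), the precise choice of $\epsilon_n$ in~\eqref{eqn:4:4.2} yields
\begin{equation*}
\lvert d_p\hat{h}_n(v) - d_p h_n(v)\rvert \leq \frac{\mathfrak{d}_n}{\|DH_{n-1}\|_0^2}.
\end{equation*}
Since $h_n|_{I_n}$ is an isometry by part~(1), $\|d_p h_n(v)\| = 1$ for $p\in I_n$, so
$\bigl|\|d_p\hat{h}_n(v)\| - 1\bigr| \leq \mathfrak{d}_n/\|DH_{n-1}\|_0^2$. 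Using $|\log(1+x)|\leq 2|x|$ for $|x|\leq 1/2$ (which is valid since $\mathfrak{d}_n$ can be made arbitrarily small by choosing $l_n$ large) finishes the bound $\text{dev}_{I_n}(\hat{h}_n) \leq 2\mathfrak{d}_n/\|DH_{n-1}\|_0^2$.

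The main obstacle is the combinatorial verification in part~(1): confirming that the sub-cuboid $I_n$, after being mapped by $\tilde{\phi}_n$ (which flips the roles of certain coordinate indices, cf.\ Remark~\ref{re:4a}), lands inside one of the squares on which $i_n$ is a rotation-plus-translation, and that the composition then lies in $\mathcal{G}_n$. This requires matching the $k_n^{5}, k_n^{11}q_n, k_n^{16}q_n$ scales in~\eqref{eqn:2.2a} to the $b_n$, $a_n$, $c_n$ scales in Proposition~\ref{prop:2a} and to $a_n$, $\varepsilon_n$ in Proposition~\ref{lemma:3a}. Once that matching is verified, part~(2) is a short perturbation estimate whose only non-trivial input is the carefully chosen $\epsilon_n$.
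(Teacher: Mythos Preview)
Your approach is the paper's own: part~(1) traces $I_n$ through $\tilde\phi_n$, $i_n$ and $g_n$ using Propositions~\ref{prop:1a}(4), \ref{prop:2a}(3) and~\ref{lemma:3a}(2) to see each factor is a rigid motion on the relevant domain, and part~(2) bounds $\|d_p\hat h_n(v)-d_ph_n(v)\|$ from the $C^1$-closeness of $(\hat g_n,\hat\phi_n)$ to $(g_n,\phi_n)$ and then applies a $|\log(1+x)|\le 2|x|$ step exactly as the paper does.

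There is one numerical slip in part~(2). By over-bounding $\|d_p\hat\phi_n(v)\|\le\vertiii{\phi_n}_1+\epsilon_n$ all the way up to $2\vertiii{\phi_n}_2+1$, you make that factor cancel \emph{exactly} against the denominator of $\epsilon_n$ in~\eqref{eqn:4:4.2}, leaving only $2k_n^8q_n^2$ to dominate $\vertiii{g_n}_2$. But from the proof of Proposition~\ref{lemma:3a} one has $\vertiii{g_n}_2$ of order $n^{11}k_n^{25}q_n^{\sigma}$ (the powers of $k_n$ come from $a_n=k_n^5$ and $\varepsilon_n^{-1}=2n^5k_n^{10}$), and condition~\ref{item:P3} only forces $q_n^{2-\sigma}\gtrsim k_n^6$; so ``absorb $c_{n,k_n,2}$ into $k_n^8$'' does not go through as stated. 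The paper instead writes the cruder-looking bound $d_1(h_n,\hat h_n)\le(1+\vertiii{g_n}_2)\,\epsilon_n$, which leaves the factor $(2\vertiii{\phi_n}_2+1)$ in the denominator of $\epsilon_n$ uncancelled; since $\vertiii{\phi_n}_2\gg\vertiii{\phi_n}_1$ by a factor polynomial in $k_n,q_n$, this is precisely what supplies the missing room. In your write-up the easy fix is to keep $\vertiii{\phi_n}_1$ in the chain-rule estimate rather than inflating it to $\vertiii{\phi_n}_2$.
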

\begin{proof}
For the first statement, it is sufficient to show that $h_n$ acts as an isometry with respect to $\omega_0$ for all $I^{u_0,u_1,u_2,u_3,u_4}_{v_0,v_1,v_2} \in {\zeta}_n$.
In our explicit construction, the map $\phi_n$ is defined by $(\ref{eqn:3.1.3a})$. If $u_0$ is odd, it acts as the identity and, hence, as an isometry. If $u_0$ is even, $\phi_n$ acts as $i_n\circ \tilde{\phi}_n$ on $I^{u_0,u_1,u_2,u_3,u_4}_{v_0,v_1,v_2}$. In Proposition \ref{prop:1a}, equation (\ref{eqn:3.1.1b}), and Remark \ref{re:4a}, the mapping behavior of $\tilde{\phi}_n$ is explicitly discussed, and it acts as an isometry on $I^{u_0,u_1,u_2,u_3,u_4}_{v_0,v_1,v_2}$ as follows:
$$\tilde{\phi}_n(I^{u_0,u_1,u_2,u_3,u_4}_{v_0,v_1,v_2}) = {I}^{u_0,u_1,k_n^5-v_0-1,u_3,u_4}_{u_2,v_1,v_2}.$$
In Proposition \ref{prop:2a}, the second property defines the map $i_n$ as a composition of a translation and some rotation on such sets ${I}^{u_0,u_1,k_n^5-v_0-1,u_3,u_4}_{u_2,v_1,v_2}$. In particular, $i_n({I}^{u_0,u_1,k_n^5-v_0-1,u_3,u_4}_{u_2,v_1,v_2}) \subset \check{I}^{u_0,u_1,k_n^5-v_0-1}_{u_2}$. The map $g_n$, as defined in Proposition \ref{lemma:3a}, acts as a translation on the image set $\phi_n(I^{u_0,u_1,u_2,u_3,u_4}_{v_0,v_1,v_2}) \subset \check{I}^{u_0,u_1,k_n^5-v_0-1}_{u_2}$. Altogether, we conclude the first statement.

Note that with our choice of parameters and the fact that $d_1(\phi_n,\hat{\phi}_n)< \epsilon_n$ and $d_1(g_n,\hat{g}_n)< \epsilon_n$ in section \ref{sec:4:4.a}, we obtain 
$$d_1(h_n,\hat{h}_n)\leq d_1(g_n\circ \phi_n, \hat{g}_n\circ \hat{\phi}_n) \leq (1+\vertiii{g_n}_2)\cdot \epsilon_n \leq \frac{\mathfrak{d}_n}{\|DH_{n-1}\|_{0}^2}.$$ 
Additionally, for any unit vector $v = (v_1, v_2)$, we obtain :
$$\left\lvert \| \mathrm{d}\hat{h}_n(v)\| - \|\mathrm{d}{h}_n(v)\| \right \rvert \leq \| \mathrm{d}\hat{h}_n(v) - \mathrm{d}{h}_n(v)\| \leq d_1(h_n,\hat{h}_n) \leq \frac{\mathfrak{d}_n}{\|DH_{n-1}\|_{0}^2}.$$ 

Referring to Definition \ref{def:8:8.1}, we can further calculate: 

$$
\left| \log \|\mathrm{d}\hat{h}_n(v)\| - \log \|\mathrm{d}h_n(v)\| \right| = \left| \log \frac{\|\mathrm{d}\hat{h}_n(v)\|}{\|\mathrm{d}h_n(v)\|} \right| \leq \frac{\frac{\mathfrak{d}_n}{\|DH_{n-1}\|_{0}^2}}{\min(\|\mathrm{d}\hat{h}_n(v)\|, \|\mathrm{d}h_n(v)\|)}.
$$
Here, we used the general relation for positive numbers $a,b$ that if $|a - b| \leq \epsilon $ then $|\log(a/b)| \leq \frac{\epsilon}{\min(a, b)}$. Since $\hat{h}_n$ and $h_n$ are close to isometries, we can assume that $\min(\|\mathrm{d}\hat{h}_n(v)\|, \|\mathrm{d}h_n(v)\|) \geq \frac{1}{2}$ for all $v \in \mathrm{T}I_n$ with $\|v\| = 1$. Consequently, the deviation for $\hat{h}_n$ can be estimated by:
\begin{align*}
\text{dev}_{I_n}(\hat{h}_n) & = \max_{v \in \mathrm{T}I_n, \|v\|  = 1} \left| \log \|\mathrm{d}\hat{h}_n(v)\| \right| \\
&\leq \max_{v \in \mathrm{T}I_n, \|v\| = 1} \left( \left| \log \|\mathrm{d}h_n(v)\| \right| + \left| \log \frac{\|\mathrm{d}\hat{h}_n(v)\|}{\|\mathrm{d}h_n(v)\|} \right| \right)\\
&\leq \text{dev}_{I_n}(h_n) + \frac{\mathfrak{d}_n}{\|DH_{n-1}\|_{0}^2} \max_{v \in \mathrm{T}I_n, \|v\| = 1} \frac{1}{\min(\|\mathrm{d}\hat{h}_n(v)\|, \|\mathrm{d}h_n(v)\|)} \\
 &\leq \text{dev}_{I_n}(h_n) + \frac{2\mathfrak{d}_n}{\|DH_{n-1}\|_{0}^2} \leq \frac{2\mathfrak{d}_n}{\|DH_{n-1}\|_{0}^2}.\qedhere
\end{align*}
\end{proof}
\remark\label{rem:8:3:1} Note that $h_n$ acts as an isometry on the sets $I^{u_0,u_1,u_2,u_3,u_4}_{v_0,v_1,v_2},$ and the union of these elements defines $\tilde{\eta}_n$ as described in subsection \ref{subsubsec:Tildeeta}.  This construction verifies the property \ref{item:D1}.
\remark\label{rem:5.5.4} By Lemma~\ref{lem:deviation} we can choose $\mathfrak{d}_n=0$ for all $n\in \N$ when applying Proposition~\ref{prop:metric} for our sequence $({f}_n)_{n\in \N}$ of $C^{\infty}$ diffeomorphisms defined as in \eqref{eq:3a}. Thus, the limit diffeomorphism $f=\lim_{n\rightarrow \infty} {f}_n$ admits an invariant measurable Riemannian metric.  
\remark \label{rem:5.5.5}For any function $\kappa:\textup{Diff}^{\infty}(M,\mu)\to \left(0,1\right)$ we can choose the sequence ${\mathfrak{d}_n}$ in equation (\ref{eqn:4:4.2}) to decay sufficiently fast so that the condition $\sum_{k\geq n}\delta_k < \kappa(H_{n-1})$ holds for every $n \in \N$. This is ensured by choosing a sufficiently large $l_n$ dependent on the function $\kappa(H_{n-1})$. Along with Lemma ~\ref{lem:deviation} and Proposition ~\ref{prop:metric}, we can then conclude that the limit diffeomorphism $\hat{f}$ obtained from the sequence $\hat{f}_n$ constructed as in \eqref{eqn:4:4.1a} admits an invariant measurable Riemannian metric.

\section{Proof of weak mixing of the derivative extension}\label{sec:app_crit}

\subsection{Choice of partial partition of \texorpdfstring{$\mathbb{P} \mathrm{T}M$}{Lg}} \label{sec:7.7.1a}
 Consider the 3-dimensional elements in $\mathbb{P} \mathrm{T}M$ for $\tilde{I}^{u_0}_{v_0} = \cup_{u_1 =0}^{k_n-1}\tilde{I}^{u_0,u_1}_{v_0}\in \tilde{\eta}_n$ by (\ref{eqn:7.1a}), and $j\in \{0, \ldots, k_n-1\}$ as
    $$\hat{I}_{u_0,v_0,j}= \tilde{I}^{u_0}_{v_0} \times T_j =  \bigcup_{u_1=0}^{k_n-1}(\tilde{I}^{u_0,u_1}_{v_0} \times T_j), \ \ \text{where} \ \ 
T_j= \left[\frac{j}{k_n},\frac{j+1}{k_n}\right].$$ Denote the following collection of elements to form a partial partition of $\mathbb{P} \mathrm{T}M$ by 
\begin{align}\label{eqn:partial partition}
    \hat{\eta}_n := \left\{\hat{I}_{u_0,v_0,j} \ : \ u_0\in \{0,\ldots,2q_n-1\}, v_0 \in \{1,\ldots,k_n^5-2\}, j\in \{0,\ldots, k_n-1\}    \right\}.
\end{align}
\remark\label{rem:7.7.1} Note that $\hat{\eta}_n$ is a partial partition of $\mathbb{P} \mathrm{T}M$. For every $n\in \N$, $\hat{\eta}_n$ consists of disjoint sets, covers a set of measure at least $\left(1-\frac{25}{k_n^5}\right)^2\geq 1-\frac{50}{k_n^5}$. For $I\in \hat{\eta}_n,$ it is a finite union of elements of the form $I^{u_0,u_1,u_2,u_3,u_4}_{v_0,v_1,v_2} \times T_j,$ and every element has $\text{diam}(I^{u_0,u_1,u_2,u_3,u_4}_{v_0,v_1,v_2} \times T_j) \leq \frac{\sqrt{2}}{k_n} \rightarrow 0$ as $n\rightarrow\infty$. Thus, the decomposition $(\hat{\eta}_n)_{n\in\N}$ converges to the decomposition into points.
\subsection{Choice of mixing sequence \texorpdfstring{$m_n$}{Lg}}\label{sec:5.1a}
We consider  $m_n \coloneqq \min\left\{m\leq q_{n+1} \ | \ \inf_{k\in \Z} \left \lvert m\frac{q_np_{n+1}}{q_{n+1}}-\frac{1}{2}+k \right\rvert \leq \frac{q_{n}}{q_{n+1}}\right\}$
and $$\mathfrak{a}_n \coloneqq \left(m_n\alpha_{n+1}- \frac{1}{2q_n}\mod \frac{1}{q_n}\right),$$
as defined in \cite{FS} for the torus case. Under the growth assumption $q_{n+1} > 2k_n^{12}q_n^{2}$,  we obtain $|\mathfrak{a}_n| \leq \frac{1}{q_{n+1}} <  \frac{1}{2k_n^{12}q_n^{2}}.$
\remark \label{re:7.2a} Since the action of $R_{\alpha_{n+1}}^{m_{n+1}}$ acts as a translation in the $\theta$ direction by the factor $m_{n+1}\alpha_{n+1}$, its action is trivial in the tangent direction and by the condition $|\mathfrak{a}_n|  < \frac{1}{2k_n^{12}q_n^2}$ we have
\begin{align*}
   {\mu}( R_{\alpha_{n+1}}^{m_n}(\check{I}^{u_0,u_1,u_2}_{v_0}) \triangle \check{I}^{u_0+1,u_1,u_2}_{v_0}) &< |\mathfrak{a}_n| < \frac{1}{2k_n^{12}q_n^2}, \\
    \mu(R_{\alpha_{n+1}}^{m_n} (\tilde{I}^{u_0,u_1}_{v_0} )\triangle \tilde{I}^{u_0+1,u_1}_{v_0}) &< |\mathfrak{a}_n| < \frac{1}{2k_n^{12}q_n^2}.
\end{align*}
\subsection{Action of \texorpdfstring{$(\Phi_n,\mathrm{d}\Phi_n)$}{Lg}}
Note that $\tilde{I}^{u_0,u_1}_{v_0}\subseteq \bigcup_{u_2=1}^{k_n^5-2}\check{I}^{u_0,u_1,u_2}_{v_0}$ with $\check{I}^{u_0,u_1,u_2}_{v_0} \in {\eta}_n$ belongs to the ``good domain" of the conjugation map $\phi_n$ by (\ref{eqn:2.2c}) and (\ref{eqn:7.1a}). Therefore, we can describe the mapping behaviour of the projectivized derivative extension $(\phi_n,\mathrm{d}\phi_n)$ on the elements $\hat{I}_{u_0,v_0,j}\in \hat{\eta}_n$ explicitly.
\begin{lemma}\label{lem:7.3a} For any $\tilde{I}^{u_0,u_1}_{v_0} \times T_j\subseteq \mathbb{P} \mathrm{T}M,$ where $u_0$ is even and $\tilde{I}^{u_0,u_1}_{v_0}$ is defined by (\ref{eqn:7.1a}), the following holds: 
    \begin{enumerate}
   \item $\phi_n(\tilde{I}^{u_0,u_1}_{v_0}) \subseteq \bigcup_{u_2=1}^{k_n^5-2} \check{I}^{u_0,u_1,k_n^5-v_0-1}_{u_2},$ and $\phi_n^{-1} (\tilde{I}^{u_0,u_1}_{v_0})\subseteq \bigcup_{u_2=1}^{k_n^5-2} \check{I}^{u_0,u_1,v_0}_{k_n^5-u_2-1}.$ 
    \item $(\phi_n,\mathrm{d}\phi_n) (\tilde{I}^{u_0,u_1}_{v_0}\times T_j)\subseteq \bigcup_{u_2=1}^{k_n^5-2} \check{I}^{u_0,u_1,k_n^5-v_0-1}_{u_2}\times T_k,$ where $k \equiv u_1+ j \mod k_n.$ Moreover,  
  \begin{equation}\label{eq:Forward}
 {\mu}\bigg(\pi_M\bigg((\phi_n,\mathrm{d}\phi_n)\big(\tilde{I}^{u_0,u_1}_{v_0}\times T_j\big) \triangle
    \bigg(\bigcup_{u_2=1}^{k_n^5-2}\check{I}^{u_0,u_1,k_n^5-v_0-1}_{u_2} \times T_k \bigg)\bigg)\bigg) \leq \frac{8}{k_n^{11}q_n}.
    \end{equation} 
    \item  $(\phi_n,\mathrm{d}\phi_n)^{-1} (\tilde{I}^{u_0,u_1}_{v_0}\times T_j)\subseteq \bigcup_{u_2=1}^{k_n^5-2} \check{I}^{u_0,u_1,v_0}_{k_n^5-u_2-1}\times T_{k'},$ where $k' \equiv -u_1+ j \mod k_n.$ Moreover, 
    \begin{equation}\label{eq:Inverse}
{\mu}\bigg(\pi_M\bigg((\phi_n,\mathrm{d}\phi_n)^{-1}\big(\tilde{I}^{u_0,u_1}_{v_0}\times T_j\big) \triangle
    \bigg(\bigcup_{u_2=1}^{k_n^5-2}\check{I}^{u_0,u_1,v_0}_{k_n^5-u_2-1} \times T_{k'} \bigg)\bigg)\bigg) \leq \frac{8}{k_n^{11}q_n}.
    \end{equation} 
  \end{enumerate}    
\end{lemma}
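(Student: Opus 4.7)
The plan is to decompose $\phi_n = i_n\circ\tilde{\phi}_n$ as in \eqref{eqn:3.1.3a} (valid on the even-$u_0$ halves, which is the only relevant region since the claim is stated for $u_0$ even), and then read off both the base and tangent dynamics from the combinatorial rules for $\tilde{\phi}_n$ and $i_n$ on $\zeta_n$ and $\eta_n$ recorded in Remarks~\ref{re:4a} and~\ref{re:5a}. By \eqref{eqn:7.1a} and \eqref{eqn:2.2c}, $\tilde{I}^{u_0,u_1}_{v_0}$ is a disjoint union of elements $I^{u_0,u_1,u_2,u_3,u_4}_{v_0,v_1,v_2}\in\zeta_n$ contained in $\bigcup_{u_2=1}^{k_n^5-2}\check{I}^{u_0,u_1,u_2}_{v_0}$. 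Combining Remark~\ref{re:4a} $\bigl(\tilde{\phi}_n I^{u_0,u_1,u_2,u_3,u_4}_{v_0,v_1,v_2} = I^{u_0,u_1,k_n^5-v_0-1,u_3,u_4}_{u_2,v_1,v_2}\bigr)$ with Remark~\ref{re:5a} (which states that $i_n$ maps each $\check{I}^{u_0,u_1,u_2'}_{v_0'}$ into itself setwise), and then taking unions, one obtains the base-level containment $\phi_n(\tilde{I}^{u_0,u_1}_{v_0})\subseteq \bigcup_{u_2=1}^{k_n^5-2}\check{I}^{u_0,u_1,k_n^5-v_0-1}_{u_2}$. Writing $\phi_n^{-1}=\tilde{\phi}_n^{-1}\circ i_n^{-1}$ and running the same chain in reverse yields $\phi_n^{-1}(\tilde{I}^{u_0,u_1}_{v_0})\subseteq \bigcup_{u_2=1}^{k_n^5-2}\check{I}^{u_0,u_1,v_0}_{k_n^5-u_2-1}$.

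For the tangent part, I would first verify that every $\zeta_n$-element composing $\tilde{I}^{u_0,u_1}_{v_0}$ sits well inside an isometric cube of $\tilde{\phi}_n$ in the sense of Proposition~\ref{prop:1a}(4), and that its $\tilde{\phi}_n$-image sits well inside an isometric region of $i_n$ in the sense of Proposition~\ref{prop:2a}(3). This reduces to comparing the offsets $u_3/(2k_n^{11}q_n)$, $u_4/(2k_n^{16}q_n)$, $v_1/(2k_n^{11}q_n)$, $v_2/(2k_n^{16}q_n)$ with the safety margins $2\varepsilon_2/(\lambda\mu) = 1/(4n^5 k_n^{16}q_n)$ and $2\varepsilon_n/(a_n c_n) = 1/(2n^5 k_n^{27}q_n)$, which is comfortably satisfied thanks to $u_3,v_2\geq 1$ and $v_1\geq 2q_n$ for all sufficiently large $k_n$. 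On such an isometric cube, Remark~\ref{re:4a} shows that the internal offsets $(u_3,u_4,v_1,v_2)$ are preserved while only the macroscopic cube position is permuted, so $\tilde{\phi}_n$ acts as a pure translation there and $\mathrm{d}\tilde{\phi}_n = \mathrm{Id}$ on tangent vectors. In the corresponding isometric region of $i_n$, matching the position of $\tilde{\phi}_n(I)$ with the indexing of Proposition~\ref{prop:2a}(3) forces $l = u_0 k_n + u_1$, so $\beta_l = s\pi/k_n$ with $s\equiv u_1\pmod{k_n}$. Consequently $\mathrm{d}\phi_n = \mathrm{d}i_n\cdot \mathrm{d}\tilde{\phi}_n$ rotates every relevant tangent vector by $u_1\pi/k_n$, which in the $T_j$ indexing of $\mathbb{P}\mathbb{R}^2\cong \mathbb{R}/\pi\mathbb{Z}$ (where an interval of length $1/k_n$ corresponds to an angular arc of $\pi/k_n$) sends $T_j$ to $T_k$ with $k\equiv j+u_1\pmod{k_n}$; the inverse computation yields $T_j\mapsto T_{k'}$ with $k'\equiv j-u_1\pmod{k_n}$.

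Finally, for the measure bound I would exploit that $\phi_n$ (and hence $\phi_n^{-1}$) is area preserving, together with the strict containment above. Setting $E = \bigcup_{u_2=1}^{k_n^5-2}\check{I}^{u_0,u_1,k_n^5-v_0-1}_{u_2}$, using Remark~\ref{rem:assumption_intesection} for the lower bound $\mu(\tilde{I}^{u_0,u_1}_{v_0})\geq \frac{1}{2k_n^6 q_n}\bigl(1-\frac{16}{k_n^5}\bigr)$, and reading off the size of each $\check{I}^{u_0,u_1,u_2'}_{v_0'}$ from \eqref{eqn:2.2b}, one computes
\begin{align*}
\mu\bigl(\phi_n(\tilde{I}^{u_0,u_1}_{v_0})\,\triangle\, E\bigr) \;=\; \mu(E)-\mu(\tilde{I}^{u_0,u_1}_{v_0}) \;\leq\; \frac{k_n^5-2}{2k_n^{11}q_n}-\frac{1}{2k_n^6 q_n}\Bigl(1-\frac{16}{k_n^5}\Bigr) \;=\; \frac{7}{k_n^{11}q_n}.
\end{align*}
Because the tangent coordinate is constantly equal to $T_k$ on the image by the previous paragraph, projecting to $M$ incurs no additional error and produces \eqref{eq:Forward} with the slightly looser bound $8/(k_n^{11}q_n)$; an identical argument applied to $\phi_n^{-1}$ gives \eqref{eq:Inverse}. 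The main technical obstacle is the bookkeeping in the second paragraph: one must certify that the entirety of $\tilde{I}^{u_0,u_1}_{v_0}$ lies in the overlap of the two isometric regions, so that the constant tangent shift by $u_1$ genuinely applies everywhere and the image's tangent indexing remains crisply $T_k$ rather than smeared across several $T_k$'s on a non-negligible connective-tissue piece.
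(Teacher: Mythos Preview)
Your proposal is correct and follows essentially the same approach as the paper: decompose $\phi_n=i_n\circ\tilde\phi_n$, use Remark~\ref{re:4a} plus $\mathrm{d}\tilde\phi_n=\mathrm{Id}$ on $\zeta_n$-elements, identify the rotation angle $u_1\pi/k_n$ from Proposition~\ref{prop:2a}(3) via $l\equiv u_1\pmod{k_n}$, and obtain the measure bound from area preservation and the inclusion. Your explicit computation $\mu(E)-\mu(\tilde I^{u_0,u_1}_{v_0})\leq 7/(k_n^{11}q_n)$ and your verification that the $\zeta_n$-elements sit inside the isometric regions of both maps are slightly more detailed than the paper's version, which simply asserts $\mathrm{d}_p\tilde\phi_n=\mathrm{id}$ and quotes Remark~\ref{rem:assumption_intesection} for the bound.
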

In particular, an element $\tilde{I}^{u_0,u_1}_{v_0} $  is mapped to a region $\bigcup_{u_2=1}^{k_n^5-2}\check{I}^{u_0,u_1,k_n^5-v_0-1}_{u_2} \subset \T^2$, which has almost full vertical length, and the $j$th tangent interval $T_j$ is translated to the $k$th tangent interval $T_k$, where $k\equiv u_1+ j \mod k_n$, under the map $(\phi_n, \mathrm{d}\phi_n)$. 

\begin{proof} 
In our explicit construction $\phi_n$ defined by $(\ref{eqn:3.1.3a})$, if $u_0$ is even, $\phi_n$ acts as $i_n\circ \tilde{\phi}_n$ on $\tilde{I}^{u_0,u_1}_{v_0}.$ In Proposition \ref{prop:1a} and Remark \ref{re:4a}, the mapping behaviour of $\tilde{\phi}_n$ is discussed explicitly. We also note that $\mathrm{d}_p\tilde{\phi}_n = \text{id}$ for any base point $p\in I^{u_0,u_1,u_2,u_3,u_4}_{v_0,v_1,v_2}$. Therefore we conclude
$$(\tilde{\phi}_n, \mathrm{d}\tilde{\phi}_n)(\tilde{I}^{u_0,u_1}_{v_0}\times T_j)= \bigcup_{u_2=1}^{k_n^5-2}\bigcup_{v_1=2q_n}^{2k_n^{6}q_n-2q_n-1}\bigcup_{u_3, u_4, v_2=1}^{k_n^5-2}{I}^{u_0,u_1,k_n^5-v_0-1,u_3,u_4}_{u_2,v_1,v_2} \times T_j.$$

In Proposition \ref{prop:2a}, the third property defines the map $i_n$ as a composition of a translation and a rotation by $\frac{u_1\cdot \pi}{k_n}$ on ${I}^{u_0,u_1,k_n^5-v_0-1,u_3,u_4}_{ u_2,v_1,v_2}$. Additionally, the second statement in Proposition~\ref{prop:2a} establishes that the image of ${I}^{u_0,u_1,k_n^5-v_0-1,u_3,u_4}_{u_2,v_1,v_2},$ for all $u_3, v_2, v_1\in \Z,$ under $i_n$ is contained in $\check{I}^{u_0,u_1,k_n^5-v_0-1}_{u_2}.$ Consequently, we obtain the following containment:
$$\big(i_n,\mathrm{d} i_n\big) \bigg(\bigcup_{u_2=1}^{k_n^5-2} \bigcup_{v_1=2q_n}^{2k_n^{6}q_n-2q_n-1}\bigcup_{u_3, u_4, v_2=1}^{k_n^5-2}{I}^{u_0,u_1,k_n^5-v_0-1,u_3,u_4}_{u_2,v_1,v_2} \times T_j\bigg) \subseteq \bigcup_{u_2=1}^{k_n^5-2} \check{I}^{u_0,u_1,k_n^5-v_0-1}_{u_2} \times T_k,$$
where $k\equiv j+ u_1 \mod k_n.$ This implies the first statement and the first part of the second statement of the lemma.

Together with (\ref{eqn:7.1a}), the first statement of the lemma, Remark \ref{rem:assumption_intesection}, and the fact that the map $\phi_n$ is measure-preserving, we get
$${\mu}\bigg(\bigg(\bigcup_{u_2=1}^{k_n^5-2}\check{I}^{u_0,u_1,k_n^5-v_0-1}_{u_2}\bigg)\bigg\backslash \phi_n \big(\tilde{I}^{u_0,u_1}_{v_0}\big)\bigg)\leq {\mu}\bigg(\bigcup_{u_2=1}^{k_n^5-2}\check{I}^{u_0,u_1,k_n^5-v_0-1}_{u_2}\bigg) -  \mu\bigg(\phi_n \big(\tilde{I}^{u_0,u_1}_{v_0}\big)\bigg) \leq \frac{8}{k_n^{11}q_n}. $$
Moreover, using the first part of second statement of the lemma, we can conclude that
\begin{equation*}
    \begin{split}
\mu\bigg(\pi_M\bigg((\phi_n,\mathrm{d}\phi_n)\big(\tilde{I}^{u_0,u_1}_{v_0}\times T_j\big) \triangle  \bigg(\bigcup_{u_2=1}^{k_n^5-2}\check{I}^{u_0,u_1,k_n^5-v_0-1}_{u_2} \times T_k \bigg)\bigg)\bigg) \\
\leq  {\mu}\bigg(\bigg(\bigcup_{u_2=1}^{k_n^5-2}\check{I}^{u_0,u_1,k_n^5-v_0-1}_{u_2}\bigg)\bigg\backslash \phi_n \big(\tilde{I}^{u_0,u_1}_{v_0}\big)\bigg) \leq \frac{8}{k_n^{11}q_n}. 
 \end{split}
\end{equation*}

Analogously, we examine the effect of $\phi_n^{-1}$, which is defined by $\tilde{\phi}_n^{-1} \circ i_n^{-1}$, on $\tilde{I}^{u_0,u_1}_{v_0}$ for an even value of $u_0$. With Proposition \ref{prop:2a}, we conclude 
$$\bigcup_{u_2=1}^{k_n^5-2} \big(i_n,\mathrm{d}i_n\big)^{-1} \bigg( \bigcup_{v_1=2q_n}^{2k_n^{6}q_n-2q_n-1}\bigcup_{u_3, u_4, v_2=1}^{k_n^5-2}I^{u_0,u_1,u_2,u_3,u_4}_{v_0,v_1,v_2} \times T_j\bigg) \subseteq \bigcup_{u_2=1}^{k_n^5-2} \check{I}^{u_0,u_1,u_2}_{v_0} \times T_{k'}, $$
where $k' \equiv - u_1+ j \mod k_n.$
Considering Remark \ref{re:4a}, which discusses the action of $\tilde{\phi}_n$ on the elements $\check{I}^{u_0,u_1,u_2}_{v_0} \in \eta_n$, and in addition to the fact that $\mathrm{d}_p\tilde{\phi}_n^{-1} = \text{id}$ for any base point $p \in \check{I}^{u_0,u_1,u_2}_{v_0},$ we conclude 
$$(\tilde{\phi}_n,\mathrm{d}\tilde{\phi}_n)^{-1}\bigg(\bigcup_{u_2=1}^{k_n^5-2} \check{I}^{u_0,u_1,u_2}_{v_0} \times T_{k'}\bigg) \subseteq \bigcup_{u_2=1}^{k_n^5-2} \check{I}^{u_0,u_1,v_0}_{k_n^5-u_2-1}\times T_{k'}.$$
Altogether, this yields the third statement as
$$
(\tilde{\phi}_n,\mathrm{d}\tilde{\phi}_n)^{-1}\circ \big(i_n,\mathrm{d}i_n\big)^{-1}(\tilde{I}^{u_0,u_1}_{v_0}\times T_j)\subseteq \bigcup_{u_2=1}^{k_n^5-2} \check{I}^{u_0,u_1,v_0}_{k_n^5-u_2-1}\times T_{k'}.
$$
Similar to the proof of estimate \eqref{eq:Forward}, we obtain the estimate \eqref{eq:Inverse}.
\end{proof}

\remark\label{rem:assumption_intersection2} Note that the partial partition $\hat{\eta}_n$ defined in Section~\ref{sec:7.7.1a} satisfies the required property \ref{item:p2} for the partial partition $\hat{\eta}_n$ of $\mathbb{P}\mathrm{T}M$ in Section~\ref{sec:crit_weak_mixing}. Specifically, with our explicit partition of $\hat{\eta}_n$, we have $\tilde{I}_{n,l} = \tilde{I}_{v_0}^{u_0,u_1}$ and $\hat{I}_{n,j} = \bigcup_{u_1=0}^{k_n-1} \tilde{I}_{v_0}^{u_0,u_1} \times T_j,$ for $u_0\in\{0,\ldots, q_n-1\}, v_0\in \{1,\ldots,k_n^5-2\},$ and $u_1,j\in \{0,\ldots,k_n-1\}.$
Since the map $g_n^{-1},$ defined in Proposition~\ref{lemma:3a}, acts as a translation in the horizontal direction on each $\tilde{I}_{v_0}^{u_0,u_1}$ by $\frac{[nq_n^\sigma]v_0}{k_n^5}$, we have $g_n^{-1}(\tilde{I}_{v_0}^{u_0,u_1}) = \tilde{I}_{v_0}^{u_0',u_1'}$ for some $u_0'$ and $u_1'$.\\
Furthermore, applying $\phi_n^{-1}$ to these elements and using the first statement of Lemma~\ref{lem:7.3a}, we obtain $h_n^{-1}(\tilde{I}_{v_0}^{u_0,u_1}) = \phi_n^{-1}\circ g_n^{-1}(\tilde{I}_{v_0}^{u_0,u_1})\subseteq \cup_{u_2=1}^{k_n^5-2}\check{I}_{k_n^5-u_2-2}^{u_0',u_1',v_0}$ with the estimate 
\begin{align}\label{eq:D4_property1}
\mu(h_n^{-1}(\tilde{I}_{v_0}^{u_0,u_1})\triangle [\cup_{u_2=1}^{k_n^5-2}\check{I}_{k_n^5-u_2-2}^{u_0',u_1',v_0}])\leq \frac{16}{k_n^5}\mu(\tilde{I}_{v_0}^{u_0,u_1}).
\end{align}
Along with Remark~\ref{rem:2.7a}, which states that each $\bigcup_{u_2=1}^{k_n^5-2} \check{I}_{k_n^5-u_2-2}^{u_0',u_1',v_0}$ is almost covered by the elements $\tilde{I}_{v_0}^{u_0} \in \tilde{\eta}_n$, let $\bar{\Lambda}_{\tilde{I}_{v_0}^{u_0',u_1'}} = \{ \tilde{I}_{b_0}^{a_0,a_1} \subseteq \bigcup_{u_2=1}^{k_n^5-2} \check{I}_{k_n^5-u_2-2}^{u_0',u_1',v_0} : \tilde{I}_{b_0}^{a_0} \in \tilde{\eta}_n, \tilde{I}_{b_0}^{a_0} = \bigcup_{a_1 =0}^{k_n-1} \tilde{I}_{b_0}^{a_0,a_1} \}$ be a collection of such sets. We have the following estimate:
$$\mu(\cup_{u_2=1}^{k_n^5-2}\check{I}_{k_n^5-u_2-2}^{u_0',u_1',v_0} \triangle [\cup_{\tilde{I}_{b_0}^{a_0,a_1}\in \bar{\Lambda}_{\tilde{I}_{v_0}^{u_0',u_1'}}}\tilde{I}_{b_0}^{a_0,a_1}])\leq \frac{16}{k_n^5} \mu(\tilde{I}_{v_0}^{u_0',u_1'}).$$
Furthermore, by applying the triangle inequality with equation \eqref{eq:D4_property1}, we can derive the following estimation:
$$\mu(h_n^{-1}(\tilde{I}_{v_0}^{u_0,u_1})\triangle [\cup_{\tilde{I}_{b_0}^{a_0,a_1}\in \bar{\Lambda}_{\tilde{I}_{v_0}^{u_0',u_1'}}}\tilde{I}_{b_0}^{a_0,a_1}])\leq \frac{32}{k_n^5}\mu(\tilde{I}_{v_0}^{u_0,u_1}),$$
which verifies the property \ref{item:p2} for the elements of $\tilde{\eta}_n$.

\begin{lemma}\label{lem:7:7.3} Consider the map $\Phi_n = \phi_n\circ R_{\alpha_{n+1}}^{m_n}\circ \phi_n^{-1}$. The map $(\Phi_n,\mathrm{d}\Phi_n)$ is $(\gamma,\delta,\varepsilon_1,\varepsilon_2)$-distributing the elements  $\hat{I}_{u_0,v_0,j}\in \hat{\eta}_n$ in the sense of 
 Definition \ref{def:6.2a} with $\gamma = \frac{1}{2k_n^5q_n}, \delta = \frac{1}{k_n^4}, \varepsilon_1=\varepsilon_2=\frac{1}{n^5}$. 
\end{lemma}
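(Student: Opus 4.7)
The strategy is to analyze the composition $(\Phi_n, \mathrm{d}\Phi_n) = (\phi_n, \mathrm{d}\phi_n) \circ (R^{m_n}_{\alpha_{n+1}}, \mathrm{d}R^{m_n}_{\alpha_{n+1}}) \circ (\phi_n, \mathrm{d}\phi_n)^{-1}$ step by step on each piece $\tilde{I}^{u_0,u_1}_{v_0} \times T_j$ of $\hat{I}_{u_0,v_0,j}$, identifying the index $l$ of Definition \ref{def:6.2a} with $-u_1 \bmod k_n$, a harmless relabeling of the decomposition $\tilde{I}^{u_0}_{v_0} = \bigcup_l \tilde{I}_{n,l}$. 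Assume $u_0$ is even, so that $\tilde{I}^{u_0,u_1}_{v_0}$ lies in the half-cylinder where $\phi_n = i_n \circ \tilde{\phi}_n$ acts nontrivially. By Lemma \ref{lem:7.3a}(3),
\[(\phi_n, \mathrm{d}\phi_n)^{-1}(\tilde{I}^{u_0,u_1}_{v_0} \times T_j) \subseteq \bigcup_{u_2=1}^{k_n^5-2} \check{I}^{u_0,u_1,v_0}_{k_n^5-u_2-1} \times T_{k'}, \quad k' \equiv -u_1 + j \pmod{k_n},\]
with symmetric-difference error at most $8/(k_n^{11} q_n)$. The right-hand side is a narrow vertical strip: the union over $u_2$ varies only the $r$-index, so the $\theta$-width is $\approx 1/(2k_n^6 q_n)$ and the $r$-range contains $[1/k_n^5, 1-1/k_n^5]$.

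Next, by Section \ref{sec:5.1a}, $R^{m_n}_{\alpha_{n+1}}$ is a $\theta$-translation by $1/(2q_n)$ modulo $1/q_n$ up to error $|\mathfrak{a}_n| < 1/(2k_n^{12} q_n^2)$, sending this strip into the complementary (``odd'') half-cylinder where $\phi_n = \mathrm{id}$; the final application of $(\phi_n, \mathrm{d}\phi_n)$ therefore does nothing beyond the shift. Combining, $\Phi_n(\tilde{I}^{u_0,u_1}_{v_0}) \subseteq [c_l, c_l + \gamma] \times [0,1]$ with an explicit $c_l$ and $\gamma \le 1/(2k_n^6 q_n) + 2|\mathfrak{a}_n| < 1/(2k_n^5 q_n)$, the $r$-projection satisfies $\lambda(J_l) \ge 1 - 3/k_n^5 > 1 - 1/k_n^4 = 1 - \delta$, and the tangent direction $T_j$ lands in $T_k$ with $k \equiv l + j \pmod{k_n}$. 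This yields conditions (1), (2), and the tangent-direction inclusion underlying (4).

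For the quantitative distribution conditions (3) and (4), the key observation is that $\phi_n^{-1}$ is a \emph{rigid motion on each good sub-cube} $I^{u_0,u_1,u_2,u_3,u_4}_{v_0,v_1,v_2} \subseteq \tilde{I}^{u_0,u_1}_{v_0}$: by Proposition \ref{prop:1a}(3)--(4), $\tilde{\phi}_n^{-1}$ is a quarter-turn composed with a translation that linearly identifies the original $\theta$-index $u_2$ with the image $r$-index, and by Proposition \ref{prop:2a}(3), $i_n^{-1}$ is a further rotation by $-u_1 \pi / k_n$ on each sub-cube. Hence, for any $\tilde{J} \subseteq J_l$, the preimage $\tilde{I}^{u_0,u_1}_{v_0} \cap \Phi_n^{-1}(\T^1 \times \tilde{J})$ is, up to boundary and bad-set corrections, a union of sub-cubes whose image $r$-coordinate lies in $\tilde{J}$, and its measure equals $\mu(\tilde{I}^{u_0,u_1}_{v_0}) \lambda(\tilde{J})/\lambda(J_l)$ up to three error sources: (a) sub-cube boundaries of scale $1/k_n^5$, (b) the bad regions of $\tilde{\phi}_n$ and $i_n$ of measure $O(\varepsilon_n) = O(1/(n^5 k_n^{10}))$, and (c) the $\mathfrak{a}_n$-shift of order $1/(k_n^{12} q_n^2)$. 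Using $k_n \ge n^5$ from \ref{item:P1}, each is dominated by $\varepsilon_1 = 1/n^5$. The same analysis applied to the tangent coordinate (a uniform rotation by $-\pi/2 - u_1 \pi / k_n$ on each good sub-cube) yields condition (4) with $\varepsilon_2 = 1/n^5$.

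The main technical obstacle is the simultaneous bookkeeping of these three error sources and verifying that they all remain strictly below $\varepsilon_1 = \varepsilon_2 = 1/n^5$: the growth condition $q_{n+1} > 2 k_n^{12} q_n^2$ from \ref{item:P2} controls the $\mathfrak{a}_n$-contribution; the smallness parameters $\varepsilon_n \sim 1/(n^5 k_n^{10})$ in Propositions \ref{prop:1a} and \ref{prop:2a} absorb the bad-set error; and the sub-cube refinement scale $1/k_n^5$ built into $\tilde{\eta}_n$ is precisely what renders the discretization error negligible.
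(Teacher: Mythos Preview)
Your approach is essentially the same as the paper's: track $(\phi_n,\mathrm{d}\phi_n)^{-1}$, then $R^{m_n}_{\alpha_{n+1}}$, then $(\phi_n,\mathrm{d}\phi_n)$ on each piece $\tilde{I}^{u_0,u_1}_{v_0}\times T_j$, invoke Lemma~\ref{lem:7.3a} for the $\phi_n$-steps and the choice of $m_n$ for the rotation step, and read off the four conditions of Definition~\ref{def:6.2a}.

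There is one genuine omission. You write ``Assume $u_0$ is even'' and never return to the odd case, but the partition $\hat{\eta}_n$ contains elements for \emph{all} $u_0\in\{0,\dots,2q_n-1\}$. The two parities are not symmetric: for $u_0$ even, $\phi_n^{-1}$ acts nontrivially and the shifted image lands in the odd half where $\phi_n=\mathrm{id}$, giving tangent index $k'\equiv -u_1+j$ (your $l=-u_1\bmod k_n$); for $u_0$ odd, $\phi_n^{-1}=\mathrm{id}$ and the shifted image lands in the even half where $\phi_n$ now acts via Lemma~\ref{lem:7.3a}(2), giving $k\equiv u_1+j$ (so $l=u_1$). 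The paper treats both cases explicitly; you need the second one as well, and your relabeling $l\leftrightarrow -u_1$ does not cover it.

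A smaller point on conditions (3)--(4): your argument frames the preimage as ``a union of sub-cubes whose image $r$-coordinate lies in $\tilde{J}$'' plus discretization error. This is delicate because the definition requires a \emph{relative} error $\varepsilon_1\lambda(\tilde J)/\lambda(J_l)$ for every $\tilde J\subseteq J_l$, and a naive discretization bound at scale $1/k_n^5$ is absolute, not relative. The paper instead works on the image side: it shows $\Phi_n(\tilde{I}^{u_0,u_1}_{v_0})$ coincides with a product set $A_\theta^{u_1}\times J_{u_1}'$ up to symmetric difference $\le \tfrac{36}{k_n^5}\mu(\cup\check I)$, and then slices by $\mathbb{T}\times\tilde J$. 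This directly yields $\mu(\Phi_n(\tilde I)\cap(\mathbb{T}\times\tilde J))\approx\lambda(A_\theta)\lambda(\tilde J)$ with the correct relative error, because the ``missing'' set is itself uniformly spread in $r$ (it consists of the thin horizontal gaps between the $\check I^{\dots}_{u_2}$ blocks and the uniformly distributed bad regions of $i_n,\tilde\phi_n$ within each block). Your three error sources (a)--(c) are the right ingredients, but you should either make this uniformity-in-$r$ explicit or switch to the image-side product argument as the paper does.
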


\begin{proof} For any $\hat{I}_{u_0,v_0,j} = \cup_{u_1=0}^{k_n-1} \tilde{I}^{u_0,u_1}_{v_0}\times T_j \in \hat{\eta}_n$, we consider two cases for the values of $u_0$ to understand the action of $(\Phi_n,\mathrm{d}\Phi_n)$. If $u_0$ is even, we apply Lemma~\ref{lem:7.3a}. If $u_0$ is odd, we use the fact that the maps $(\phi_n,\mathrm{d}\phi_n)$ act as the identity inside the ``good domains" and Remark~\ref{re:7.2a}.  \\
\textbf{Case 1:} If $u_0$ is even, we have for each fixed $u_1 \in \{0,\ldots,k_n-1\}$,
\begin{align}
    (\Phi_n,\mathrm{d}\Phi_n)(\tilde{I}^{u_0,u_1}_{v_0}\times T_j) &= (\phi_n,\mathrm{d}\phi_n)\circ (R_{\alpha_{n+1}},\mathrm{d}R_{\alpha_{n+1}})^{m_n}\circ (\phi_n,\mathrm{d}\phi_n)^{-1}\Bigg(\tilde{I}^{u_0,u_1}_{v_0}\times T_j\Bigg)\nonumber\\
    &\subseteq (\phi_n,\mathrm{d}\phi_n)\circ (R_{\alpha_{n+1}},\mathrm{d}R_{\alpha_{n+1}})^{m_n}\Bigg(\bigcup_{u_2=1}^{k_n^5-2}\check{I}^{u_0,u_1,v_0}_{k_n^5-u_2-1} \times T_{k'} \Bigg),  \nonumber 
    \end{align} 
  where $k' \equiv -u_1+ j \mod k_n$. Using the Remark \ref{re:7.2a} and Lemma \ref{lem:7.3a}, and  the fact that $(\phi_n,\mathrm{d}\phi_n)$ acts as the identity in the domain $(\bigcup_{u_2=1}^{k_n^5-2}\check{I}^{u_0+1,u_1,v_0}_{k_n^5-u_2-1} \times T_{k'} ),$  we can conclude $$(\Phi_n,\mathrm{d}\Phi_n)(\tilde{I}^{u_0,u_1}_{v_0}\times T_j)\big)\cap \bigcup_{u_2=1}^{k_n^5-2}\check{I}^{u_0+1,u_1,v_0}_{k_n^5-u_2-1} \times T_{k'} \neq \emptyset, $$   
  and the following estimate
   \begin{equation*}\label{eqn:lem:8.4.1a}
   {\mu}\bigg(\pi_M\bigg(\big((\Phi_n,\mathrm{d}\Phi_n)(\tilde{I}^{u_0,u_1}_{v_0}\times T_j)\big)\triangle \bigcup_{u_2=1}^{k_n^5-2}\check{I}^{u_0+1,u_1,v_0}_{k_n^5-u_2-1} \times T_{k'} \big)\bigg)\bigg) \leq \frac{9}{k_n^{11}q_n}.
   \end{equation*}
   
\noindent\textbf{Case 2:} If $u_0$ is odd, we have for each fixed $u_1 \in \{0,\ldots,k_n-1\}$,
\begin{align}
    (\Phi_n,\mathrm{d}\Phi_n)(\tilde{I}^{u_0,u_1}_{v_0}\times T_j) &=(\phi_n,\mathrm{d}\phi_n)\circ (R_{\alpha_{n+1}},\mathrm{d}R_{\alpha_{n+1}})^{m_n}\circ (\phi_n,\mathrm{d}\phi_n)^{-1}\big(\tilde{I}^{u_0,u_1}_{v_0}\times T_j \big) \nonumber \\
    & = (\phi_n,\mathrm{d}\phi_n)\circ (R_{\alpha_{n+1}},\mathrm{d}R_{\alpha_{n+1}})^{m_n}\big(\tilde{I}^{u_0,u_1}_{v_0}\times T_j\big).   \nonumber 
\end{align}
Similar to Case 1, using Remark \ref{re:7.2a} and Lemma {\ref{lem:7.3a}}, we have $$(\Phi_n,\mathrm{d}\Phi_n)(\tilde{I}^{u_0,u_1}_{v_0}\times T_j)\big) \bigcap \bigcup_{u_2=1}^{k_n^5-2}\check{I}^{u_0+1,u_1,k_n^5-v_0-1}_{u_2} \times T_k \neq \emptyset,$$ and the following estimate $$ \mu\bigg(\pi_M\bigg(\big((\Phi_n,\mathrm{d}\Phi_n)(\tilde{I}^{u_0,u_1}_{v_0}\times T_j)\big)\triangle \big(\bigcup_{u_2=1}^{k_n^5-2}\check{I}^{u_0+1,u_1,k_n^5-v_0-1}_{u_2} \times T_k\big)\bigg)\bigg) \leq \frac{9}{k_n^{11}q_n},$$
where $k \equiv u_1+ j \mod k_n$. 

Note that the $\theta$ widths of the sets $(\bigcup_{u_2=1}^{k_n^5-2}\check{I}^{u_0+1,u_1,k_n^5-v_0-1}_{u_2})$ and $(\bigcup_{u_2=1}^{k_n^5-2}\check{I}^{u_0+1,u_1,v_0}_{k_n^5-u_2-1})$ are at most $\frac{1}{2k_n^6q_n}(1-\frac{1}{k_n^5}).$ Consequently, both cases imply that for all $u_1\in \{0,\ldots,k_n-1\}$ the set $\Phi_n(\tilde{I}^{u_0,u_1}_{v_0})$ is contained within a set of $\theta$ width at most $\frac{1}{2k_n^6q_n} -\frac{1}{2k_n^{11}q_n}+ \frac{9}{k_n^{11}q_n}$. Hence, we can choose $\gamma = \frac{1}{2k_n^5q_n}.$

Furthermore, note that $\mu(\tilde{I}^{u_0,u_1}_{v_0}) \geq \frac{1}{2k_n^6q_n}\left(1 - \frac{16}{k_n^5}\right)$. The transformation $\Phi_n$ is measure preserving, and $\Phi_n(\tilde{I}^{u_0,u_1}_{v_0})$ has $\theta$ width of at most $\frac{1}{2k_n^6q_n}\left(1+\frac{4}{k_n^5}\right)$, i.e.
$\lambda\left(\pi_{\theta}(\Phi_n(\tilde{I}^{u_0,u_1}_{v_0}))\right) \leq \frac{1}{2k_n^6q_n}(1+\frac{4}{k_n^5}).$  This implies that $\lambda\left(\pi_{r}\big(\Phi_n(\tilde{I}^{u_0,u_1}_{v_0})\big)\right) \geq \left(1 - \frac{20}{k_n^5}\right)$, allowing us to choose $\delta = \frac{1}{k_n^4}$.

Denote $J_{u_1} = \pi_{r}\big(\Phi_n(\tilde{I}^{u_0,u_1}_{v_0})\big),$   $A_{\theta}^{u_1}= \pi_{\theta}(\bigcup_{u_2=1}^{k_n^5-2}\check{I}^{u_0+1,u_1,k_n^5-v_0-1}_{u_2}) = \pi_{\theta}(\check{I}^{u_0+1,u_1,k_n^5-v_0-1}_{u_2}),$ and  $J_{u_1}' = \pi_{r}(\bigcup_{u_2=1}^{k_n^5-2}\check{I}^{u_0+1,u_1,k_n^5-v_0-1}_{u_2}).$
Thus we can express $\bigcup_{u_2=1}^{k_n^5-2}\check{I}^{u_0+1,u_1,k_n^5-v_0-1}_{u_2} =  A_{\theta}^{u_1} \times J_{u_1}'$. 

Analogous to Cases 1 and 2, using Remark~\ref{rem:assumption_intersection2} and Lemma~\ref{lem:7.3a}, we obtain the following estimate:
\begin{align}\label{eqn:explicit_distri_phi_n}
\mu\big(\Phi_n(\tilde{I}^{u_0,u_1}_{v_0}) \triangle \bigcup_{u_2=1}^{k_n^5-2}\check{I}^{u_0+1,u_1,k_n^5-v_0-1}_{u_2} \big) \leq \frac{9}{k_n^{11}q_n} \leq \frac{36}{k_n^5}\mu\big(\bigcup_{u_2=1}^{k_n^5-2}\check{I}^{u_0+1,u_1,k_n^5-v_0-1}_{u_2}\big),
\end{align}
for  any $\tilde{J} \subseteq J_{u_1}'\cap J_{u_1}$, we can conclude that 
\begin{align}\label{eq:Distri1}
   \left(1-\frac{36}{k_n^5}\right)\lambda(A_{\theta}^{u_1})\lambda(\tilde{J})\leq  \mu(\Phi_n(\tilde{I}^{u_0,u_1}_{v_0})\cap (\mathbb{T}\times \tilde{J}))\leq \left(1+\frac{36}{k_n^5}\right)\lambda(A_{\theta}^{u_1})\lambda(\tilde{J}).
    \end{align}
Equation  (\ref{eqn:explicit_distri_phi_n}) also yields 
\begin{equation}\label{eq:ApproxDistri}
\begin{split}
|\lambda(A_{\theta}^{u_1})\lambda(J_{u_1}) - \mu(\Phi_n(\tilde{I}^{u_0,u_1}_{v_0})) | = |\mu(\bigcup_{u_2=1}^{k_n^5-2}\check{I}^{u_0+1,u_1,k_n^5-v_0-1}_{u_2})- \mu(\Phi_n(\tilde{I}^{u_0,u_1}_{v_0})) | 
\\ 
\leq \mu\big(\Phi_n(\tilde{I}^{u_0,u_1}_{v_0}) \triangle \bigcup_{u_2=1}^{k_n^5-2}\check{I}^{u_0+1,u_1,k_n^5-v_0-1}_{u_2} \big) \leq \frac{9}{k_n^{11}q_n} \leq \frac{36}{k_n^5}\mu(\tilde{I}^{u_0,u_1}_{v_0}).
\end{split}
\end{equation}
For the last inequality, we used  $\frac{1}{2k_n^6q_n}(1-\frac{16}{k_n^5})< \mu(\tilde{I}_{v_0}^{u_0,u_1})$ that yields $\frac{1}{\mu(\tilde{I}_{v_0}^{u_0,u_1})} \leq {4k_n^6q_n} $.

Using the triangle inequality with the estimates in (\ref{eqn:explicit_distri_phi_n}), (\ref{eq:Distri1}), and (\ref{eq:ApproxDistri}), we obtain 
\begin{align}
    &|\mu(\Phi_n(\tilde{I}^{u_0,u_1}_{v_0})\cap (\mathbb{T}\times \tilde{J}))\lambda(J_{u_1}) - \lambda(\tilde{J})\mu(\tilde{I}^{u_0,u_1}_{v_0})|\nonumber \\
    \leq &  \lambda(J_{u_1}) |\mu(\Phi_n(\tilde{I}^{u_0,u_1}_{v_0})\cap (\mathbb{T}\times \tilde{J})) - \lambda(\tilde{J})\lambda(A_{\theta}^{u_1})| 
    + \lambda(\tilde{J})|\lambda(A_{\theta}^{u_1})\lambda(J_{u_1}) - \mu(\Phi_n(\tilde{I}^{u_0,u_1}_{v_0})) |\nonumber \\
    \leq & \frac{36}{k_n^5} \lambda(\tilde{J})\lambda(A_{\theta}^{u_1})\lambda(J_{u_1}) + \frac{36}{k_n^5}\lambda(\tilde{J})\mu(\tilde{I}^{u_0,u_1}_{v_0}) < \frac{1}{n^5}\lambda(\tilde{J})\mu(\tilde{I}^{u_0,u_1}_{v_0}).
\end{align}
Thus, we can choose $\varepsilon_1= \frac{1}{n^5}.$ 

Next, we show that $(\Phi_n,\mathrm{d}\Phi_n)$ distributes $\hat{I}_{u_0,v_0,j} = \cup_{u_1=0}^{k_n-1}\tilde{I}^{u_0,u_1}_{v_0}\times T_j \in \hat{\eta}_n$ in the tangent direction in the sense of Definition \ref{def:6.2a}:  For any $j,k \in \{0,\ldots,k_n-1\},$ there exist a unique $u_1\in \{0,\ldots,k_n-1\}$ satisfying $k\equiv u_1+j \mod k_n,$ and $\tilde{J}\subseteq J_{u_1} = \pi_{r}(\Phi_n(\tilde{I}^{u_0,u_1}_{v_0}))$ such that $$(\Phi_n,\mathrm{d}\Phi_n)(\tilde{I}^{u_0,u_1}_{v_0}\times T_j) \cap (\mathbb{T}\times \tilde{J}\times T_k)\neq \emptyset,$$ and building on \eqref{eq:Distri1} we get the following  estimate 
   \begin{align*}
      \big|\mu\left(\pi_M\left(\tilde{I}^{u_0,u_1}_{v_0}\times T_j \cap (\Phi_n,\mathrm{d}\Phi_n)^{-1}(\mathbb{T}\times \tilde{J}\times T_k)\right)\right)\lambda(J_{u_1}) - \lambda(\tilde{J})\mu(\tilde{I}^{u_0,u_1}_{v_0})\big| \leq \frac{1}{n^5}\lambda(\tilde{J})\mu(\tilde{I}^{u_0,u_1}_{v_0}),
   \end{align*}
which allows us to choose $\varepsilon_2 = \frac{1}{n^5}.$ 

Altogether, this shows that $(\Phi_n,\mathrm{d}\Phi_n)$ is $(\frac{1}{2k_n^5q_n},\frac{1}{k_n^4},\frac{1}{n^5},\frac{1}{n^5})$-distributing every element $\hat{I}_{u_0,v_0,j}\in \hat{\eta}_n.$
\end{proof}

\begin{lemma}\label{lemm:6.16}
Consider a sequence of partial partitions $(\hat{\eta}_n)_{n\in\N}$ and $(\tilde{\eta}_n)_{n\in \N}$ of the form as described in (\ref{eqn:partial partition}) and (\ref{eqn:7.1a}), respectively, and the diffeomorphism $g_n$ from Proposition~\ref{lemma:3a}. Furthermore, let $(H_n)_{n\in \N}$ be a sequence of smooth measure-preserving diffeomorphism satisfying the condition \ref{item:P1}. Consider the partial partitions
\begin{align} \label{eqn:partial partition_2}  \hat{\nu}_n= \{\Gamma_{u_0,v_0,j}= (H_{n-1},\mathrm{d} H_{n-1})\circ (g_n,\mathrm{d}g_n)(\hat{I}_{u_0,v_0,j}) \ \ : \ \ \hat{I}_{u_0,v_0,j}\in \hat{\eta}_n\}.
\end{align}
Then $\hat{\nu}_n \to \epsilon.$ 
\end{lemma}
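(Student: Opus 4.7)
The plan is to establish $\hat{\nu}_n \to \epsilon$ via the two classical sufficient conditions: (a) the total $\bar{\mu}$-measure of the elements of $\hat{\nu}_n$ tends to $1$ as $n \to \infty$, and (b) $\sup_{\Gamma \in \hat{\nu}_n} \text{diam}(\Gamma) \to 0$. Combined with disjointness of the elements (immediate from the fact that $(H_{n-1},\mathrm{d}H_{n-1})\circ(g_n,\mathrm{d}g_n)$ is a diffeomorphism of $\mathbb{P}\mathrm{T}M$ and the elements of $\hat{\eta}_n$ are pairwise disjoint by the definition in Section~\ref{sec:7.7.1a}), these two conditions imply convergence to the decomposition into points by a standard approximation argument.

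For (b), the key observation I would record first is that every cuboid $I^{u_0,u_1,u_2,u_3,u_4}_{v_0,v_1,v_2}$ appearing in the definition \eqref{eqn:7.1a} of $\tilde{I}^{u_0,u_1}_{v_0}$ has its $r$-coordinate entirely contained in the strip $\bigl[\tfrac{v_0+2\varepsilon_n}{a_n}, \tfrac{v_0+1-2\varepsilon_n}{a_n}\bigr]$ with $a_n = k_n^5$ and $\varepsilon_n = \tfrac{1}{2n^5k_n^{10}}$: this follows because $v_0 \in \{1,\ldots,k_n^5-2\}$ and the extra offsets $\tfrac{v_1}{2k_n^{11}q_n}$ and $\tfrac{v_2}{2k_n^{16}q_n}$ in \eqref{eqn:2.2a} keep the $r$-range safely inside the interior of the strip. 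Hence $\tilde{I}^{u_0,u_1}_{v_0}$ lies inside a single strip of the good domain $\mathcal{G}_n$ (see \eqref{eqn:gooddomain_g_n}), on which $g_n$ acts as a pure translation in $\theta$ by Proposition~\ref{lemma:3a}(2), so $\mathrm{d}g_n \equiv \mathrm{id}$ and $(g_n,\mathrm{d}g_n)$ is an isometry on $\hat{I}_{u_0,v_0,j}$. Consequently $\text{diam}\bigl((g_n,\mathrm{d}g_n)(\hat{I}_{u_0,v_0,j})\bigr) = \text{diam}(\hat{I}_{u_0,v_0,j}) \leq \sqrt{(1/(2q_n))^2 + (1/k_n^5)^2 + (1/k_n)^2} < 3/k_n$, using condition \ref{item:P3}. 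Applying condition \ref{item:P1} then yields $\text{diam}(\Gamma_{u_0,v_0,j}) \leq 1/n^2 \to 0$.

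For (a), I would apply Lemma~\ref{lemm~measurepreservation} with the choice $S'_n = \mathbb{T}^2$, so that $\tilde{c}_{n,u_1} = \tilde{I}^{u_0,u_1}_{v_0}$, obtaining for each quadruple $(u_0,v_0,u_1,j)$ the lower bound
\[
\bar{\mu}\bigl((H_{n-1},\mathrm{d}H_{n-1})\circ(g_n,\mathrm{d}g_n)(\tilde{I}^{u_0,u_1}_{v_0}\times T_j)\bigr) \geq \Bigl(\tfrac{1}{k_n} - \tfrac{36(k_n-1)}{k_n^5}\Bigr)\mu(\tilde{I}^{u_0,u_1}_{v_0}).
\]
Summing over $u_1$ gives $\bar{\mu}(\Gamma_{u_0,v_0,j}) \geq \tfrac{1}{k_n}\bigl(1 - \tfrac{36}{k_n^3}\bigr)\mu(\tilde{I}^{u_0}_{v_0})$; a further sum over $j \in \{0,\ldots,k_n-1\}$ and over all admissible $(u_0,v_0)$, together with the coverage bound from \ref{item:p1} that $\sum_{u_0,v_0} \mu(\tilde{I}^{u_0}_{v_0}) \geq 1 - 25/k_n^5$, produces
\[
\bar{\mu}\Bigl(\bigcup_{\Gamma\in\hat{\nu}_n}\Gamma\Bigr) \geq \Bigl(1 - \tfrac{36}{k_n^3}\Bigr)\Bigl(1 - \tfrac{25}{k_n^5}\Bigr) \xrightarrow{n\to\infty} 1.
\]

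The main technical step is the first one: verifying that the $r$-coordinates of all cuboids inside $\tilde{I}^{u_0,u_1}_{v_0}$ actually fit inside a single strip of the good domain $\mathcal{G}_n$, so that $g_n$ neither distorts the set nor splits it across different translation strata. Without this, the set $(g_n,\mathrm{d}g_n)(\hat{I}_{u_0,v_0,j})$ could have $\theta$-diameter of order $b_n/a_n \sim 1/k_n^4 \cdot nq_n^\sigma$, which would not fit the hypothesis $\text{diam}<3/k_n$ of \ref{item:P1}, and so the diameter bound on $\Gamma_{u_0,v_0,j}$ would not propagate. Once this strip-containment is in place, the rest of the proof is routine.
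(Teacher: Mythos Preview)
Your proof is correct and follows the same two-pronged strategy as the paper: coverage via Lemma~\ref{lemm~measurepreservation} with $S'_n=\mathbb{T}^2$, and diameter control via condition~\ref{item:P1} after checking that $(g_n,\mathrm{d}g_n)$ does not enlarge the sets. Your diameter argument is in fact slightly cleaner than the paper's: you bound the diameter of the whole element $\hat{I}_{u_0,v_0,j}$ at once by observing that all of $\tilde{I}^{u_0}_{v_0}$ lies in a single strip of $\mathcal{G}_n$ (so $g_n$ is a single translation, hence an isometry, on the entire element), whereas the paper splits into $u_1$-pieces and records a $\theta$-width bound $\frac{1}{2k_n^6q_n}+\frac{[nq_n^{\sigma}]}{k_n^5}$ that carries an unnecessary shear term.
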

\begin{proof}
By construction $\hat{\eta}_n$ is a partial partition of $\mathbb{P} \mathrm{T}M$ and $\hat{\eta}_n \to \epsilon$, i.e. for every $n\in \N$, $\hat{\eta}_n$ consists of countable disjoint sets and covers a set of almost full measure, $\lim\limits_{n\rightarrow \infty} \bar{\mu} \left(\bigcup \limits_{(u_0,v_0,j)\in \Lambda_n} \hat{I}_{u_0,v_0,j}\right) = 1$, where $\Lambda_n$ denotes the set of permissible indices in (\ref{eqn:partial partition}). By definition, an element $\hat{I}_{u_0,v_0,j} \in \hat{\eta}_n$ is of the form $\hat{I}_{u_0,v_0,j} = \cup_{u_1 =0}^{k_n-1}\tilde{I}_{v_0}^{u_0,u_1}\times T_j$. 
Denote $\Gamma_{u_0,v_0,j} = \cup_{u_1 =0}^{k_n-1} \Gamma_{v_0,j}^{u_0,u_1}$ and $\Gamma_{v_0,j}^{u_0,u_1} =(H_{n-1},\mathrm{d} H_{n-1})\circ (g_n,\mathrm{d}g_n)(\tilde{I}_{v_0}^{u_0,u_1} \times T_j).$ 
Using the estimates in Lemma~\ref{lemm~measurepreservation}, we can build on Remark \ref{rem:7.7.1} to conclude that the collection of elements $\Gamma_{u_0,v_0,j}\in \hat{\nu}_n$ covers a set of almost full measure:
\begin{align*}
\lim_{n\rightarrow \infty} \bar{\mu} \left(\bigcup_{u_0,v_0,j\in \Lambda_n} \Gamma_{u_0,v_0,j}\right) &= \lim_{n\rightarrow \infty} \sum_{u_0,v_0,j \in \Lambda_n} \sum_{u_1 =0}^{k_n-1}\bar{\mu} \left((H_{n-1},\mathrm{d} H_{n-1})\circ (g_n,\mathrm{d}g_n)(\tilde{I}_{v_0}^{u_0,u_1}\times T_j )\right) \nonumber \\ 
& \geq \lim_{n\rightarrow \infty} \sum_{u_0,v_0,j \in \Lambda_n} \sum_{u_1 =0}^{k_n-1} \left(1-\frac{36}{k_n^4}\right)\cdot\frac{1}{k_n}\mu(\tilde{I}_{v_0}^{u_0,u_1})\\
& \geq \lim_{n\rightarrow \infty} \left(1-\frac{36}{k_n^4}\right) \mu\left(\bigcup_{u_0,v_0,j\in \Lambda_n} \bigcup_{u_1 =0}^{k_n-1}\tilde{I}_{v_0}^{u_0,u_1}\right)\\
& \geq \lim_{n\rightarrow \infty} \left(1-\frac{36}{k_n^4}\right) \left(1-\frac{25}{k_n^3}\right) \geq \lim_{n\rightarrow \infty} \left(1-\frac{80}{k_n^3}\right).
\end{align*}
Further note in Proposition \ref{lemma:3a} that the map $g_n$ acts as the translation in the $\theta$-direction on $\T^1 \times [\frac{j+2\varepsilon_n}{a_n}, \frac{j+1- 2\varepsilon_n}{a_n}]$ by a factor $[nq_n^{\sigma}]\cdot\frac{j}{k_n^5}$ and its action is trivial in the tangent direction. 
Since $\tilde{I}_{v_0}^{u_0,u_1}\subset \mathcal{G}_n$, it follows that each $g_n(\tilde{I}_{v_0}^{u_0,u_1})$ is contained in a rectangle of $\theta$ width $\frac{1}{2k_n^6q_n} + \frac{[nq_n^{\sigma}]}{k_n^5}$ and height $\frac{1}{k_n^5}.$ Hence, for $\sigma < 0.5,$  and  using the fact that $\mathrm{d}g_n$ acts as the identity function on the spherical coordinates with base points $x\in \mathcal{G}_n$ (see (\ref{eqn:gooddomain_g_n})), we have $\text{diam}((g_n, \mathrm{d}{g_n})(\tilde{I}_{v_0}^{u_0,u_1} \times T_j))\leq \frac{3}{k_n}.$
Combining this with condition \ref{item:P1}, we can conclude that
$$\text{diam}\left((H_{n-1},\mathrm{d} H_{n-1})\circ (g_n,\mathrm{d}g_n)(\tilde{I}_{v_0}^{u_0,u_1} \times T_j)\right) < \frac{1}{n^2}.$$ 
Therefore, $\text{diam}{(\Gamma_{v_0,j}^{u_0,u_1})} \to 0$ for all $\Gamma_{v_0,j}^{u_0,u_1}\in \hat{\nu}_n$ as $n \to \infty$ and consequently $\hat{\nu}_n\to \epsilon$.
\end{proof}
\begin{lemma}\label{lem:partial_partition2}
Consider the sequence of maps $H_{n}, g_n, \hat{H}_{n},$ and $\hat{g}_n$ as defined in Sections \ref{sec:3.3.1} and \ref{sec:4:4.a} respectively.
Consider the partial partitions $\hat{\nu}_n$ and $\hat{\hat{\nu}}_n,$ defined by equation (\ref{eqn:partial partition_2}), and
\begin{align}\label{eqn:partial partition_3}
\hat{\hat{\nu}}_n= \left\{(\hat{H}_{n-1},\mathrm{d} \hat{H}_{n-1})\circ (\hat{g}_n,d\hat{g}_n)(\hat{I}_{n,j}) \ \ : \ \ \hat{I}_{n,j}\in \hat{\eta}_n\right\}.
\end{align}
If $\hat{\nu}_n \to \epsilon$, then $\hat{\hat{\nu}}_n \to \epsilon.$   
\end{lemma}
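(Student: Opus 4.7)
My plan is to establish both defining properties of $\hat{\hat{\nu}}_n \to \epsilon$: that the diameters of its elements tend to zero, and that their union has $\bar{\mu}$-measure tending to one. The strategy is to exploit the $C^1$-proximity of the analytic conjugation maps to their smooth counterparts for the diameter estimate, and to develop an analog of Lemma~\ref{lemm~measurepreservation} in the analytic category for the covering estimate.

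For the diameter property, I would first bound the $C^0$-distance between $(\hat{H}_{n-1}, \mathrm{d}\hat{H}_{n-1}) \circ (\hat{g}_n, \mathrm{d}\hat{g}_n)$ and $(H_{n-1}, \mathrm{d}H_{n-1}) \circ (g_n, \mathrm{d}g_n)$ on $\mathbb{P}\mathrm{T}M$ by a quantity $\delta_n$. Using the estimates $d_r(\hat{\phi}_k\!\restriction_{\T^2}, \phi_k), d_r(\hat{g}_k\!\restriction_{\T^2}, g_k) < \epsilon_k$ from Corollary~\ref{coro:4:4.1} together with its derivative-level estimate, Lemma~\ref{le:2a} for compositions, and the rapid decay of $\epsilon_k$ prescribed by (\ref{eqn:4:4.2}), a standard telescoping argument yields $\delta_n \to 0$. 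Each $\hat{\Gamma}_{n,j} \in \hat{\hat{\nu}}_n$ is then contained in a $\delta_n$-neighborhood of the corresponding $\Gamma_{n,j} \in \hat{\nu}_n$ and vice versa, so $\text{diam}(\hat{\Gamma}_{n,j}) \leq \text{diam}(\Gamma_{n,j}) + 2\delta_n \to 0$ since $\hat{\nu}_n \to \epsilon$ by hypothesis.

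The main obstacle is the covering property, because the analytic $\bar{\mu}$ is induced by a limiting Riemannian metric $\hat{\omega}_\infty = \lim_k (\hat{H}_k^{-1})^*\omega_0$ that a priori differs from the one in the smooth setting, so the hypothesis $\hat{\nu}_n \to \epsilon$ does not transfer measure-theoretically by itself. To resolve this I would derive the analytic analog of Lemma~\ref{lemm~measurepreservation}: define the analytic good domain in parallel to (\ref{eqn:future_domain}) using the forward images of the analytic conjugation maps $\hat{h}_k$, which by Lemma~\ref{lem:deviation} act as almost-isometries with deviations summable thanks to the $\mathfrak{d}_k$ chosen as in Remark~\ref{rem:5.5.5}. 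The same calculation as in the proof of Lemma~\ref{lemm~measurepreservation} then shows that $\bar{\mu}((\hat{H}_{n-1}, \mathrm{d}\hat{H}_{n-1})(\tilde{S} \times T_k))$ equals $\tfrac{1}{k_n}\mu(\tilde{S})$ up to a vanishing error, with the isometries of $h_k^{-1}\circ\cdots\circ h_n^{-1}$ replaced by the almost-isometries of $\hat{h}_k^{-1}\circ\cdots\circ \hat{h}_n^{-1}$. Applying this element-wise to $\hat{\Gamma}_{n,j} = (\hat{H}_{n-1}, \mathrm{d}\hat{H}_{n-1}) \circ (\hat{g}_n, \mathrm{d}\hat{g}_n)(\hat{I}_{n,j})$, using that $\hat{g}_n$ is measure-preserving with differential close to the identity on the good domain of $g_n$, and summing over the partition exactly as at the end of the proof of Lemma~\ref{lemm:6.16} together with the covering bound from Remark~\ref{rem:7.7.1}, gives $\bar{\mu}(\bigcup \hat{\Gamma}_{n,j}) \to 1$, completing the proof.
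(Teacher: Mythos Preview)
Your proposal is correct and follows essentially the same approach as the paper. The paper's proof is very terse: it simply says the argument ``proceeds along the same lines as Lemma~\ref{lemm:6.16}'' and then writes out only the diameter comparison via $d_1(\hat{H}_{n-1}\circ \hat{g}_n, H_{n-1}\circ g_n)\leq (1+\vertiii{H_{n-1}}_0)\epsilon_n\leq 2^{-n}$, which matches your diameter argument.

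Where you differ is in being more careful than the paper about the covering property. You rightly flag that the analytic $\bar{\mu}$ is built from the limit metric $\hat{\omega}_\infty=\lim_k(\hat{H}_k^{-1})^*\omega_0$, so one needs an analytic analog of Lemma~\ref{lemm~measurepreservation} in which the exact isometries $h_k^{-1}\circ\cdots\circ h_n^{-1}$ are replaced by the almost-isometries $\hat{h}_k^{-1}\circ\cdots\circ \hat{h}_n^{-1}$ with summable deviations from Lemma~\ref{lem:deviation}. The paper acknowledges at the start of Section~\ref{sec:crit_weak_mixing} that the two measures are different but then suppresses this distinction in the notation and in the proofs; your outline makes explicit precisely the step the paper leaves implicit, and this is what actually makes the measure estimate go through in the analytic setting.
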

\begin{proof}
The proof proceeds along the same lines as Lemma~\ref{lemm:6.16}, employing the estimate $d_k(g_n,\hat{g}_n)\leq \epsilon_n$, and for every $\tilde{I}_{v_0}^{u_0,u_1}\in \tilde{\eta}_n$:
\begin{align}
    | \text{diam}(\hat{H}_{n-1}\circ \hat{g}_n(\tilde{I}_{v_0}^{u_0,u_1}) )- \text{diam}({H}_{n-1}\circ {g}_n(\tilde{I}_{v_0}^{u_0,u_1}))| &\leq d_1(\hat{H}_{n-1}\circ \hat{g}_n \restriction_{\T^2}, {H}_{n-1}\circ {g}_n) \nonumber\\
    & \leq (1+ \vertiii{H_{n-1}}_0)\cdot \epsilon_n  \leq \frac{1}{2^n}.\nonumber
\end{align}
Therefore, for every $\tilde{I}_{v_0}^{u_0,u_1}\in \hat{\eta}_n,$ we have $\text{diam}({H}_{n-1}\circ {g}_n(\tilde{I}_{v_0}^{u_0,u_1}) ) \rightarrow 0$, which in turn implies that $\text{diam}(\hat{H}_{n-1}\circ \hat{g}_n(\tilde{I}_{v_0}^{u_0,u_1}) ) \rightarrow 0$ as $n\rightarrow \infty$, leading to $\hat{\hat{\nu}}_n \to \epsilon.$ 
\end{proof}

\subsection{Application of the criterion for weak mixing: Proof of Theorems \ref{thm:1.1} and \ref{thm:1.2}}

\begin{proof}[Proof of Theorem \ref{thm:1.1}]
For any Liouville number $\alpha \in \mathbb{R}$ we can consider a sequence of diffeomorphisms $f_n$ constructed by equations (\ref{eq:3a}) and (\ref{eq:3:3b}) with explicit conjugation maps as in Section \ref{sec:conj_g_n} and \ref{sec:phi}. Lemma \ref{la:1b} and \ref{lem:4.4.1a} guarantee the convergence of the sequence $(f_n)_{n\in \N}$ to a limit diffeomorphism $f\in \mathcal{A}_{\alpha}(\mathbb{T}^2)$ and the proximity condition { $d_1(f_n, f) < \frac{1}{2^n}$} for all $n \in \N$. 

To establish the weak mixing property of the limiting map $(f, \mathrm{d}f)$ with respect to the invariant measure $\bar{\mu}$ we will apply Proposition \ref{prop:6.6.3a}. We consider the sequence of the decomposition $\hat{\eta}_n$ and $\hat{\nu}_n$, and the associated natural numbers $(m_n)$ as described in equations (\ref{eqn:partial partition}), (\ref{eqn:partial partition_2}), and Section (\ref{sec:5.1a}), respectively. Note that $\hat{\nu}_n \rightarrow \epsilon$ by Remark (\ref{rem:7.7.1}), $\hat{\eta}_n \rightarrow \epsilon$ by Lemma \ref{lemm:6.16}, and the diffeomorphism $(\Phi_n,\mathrm{d} \Phi_n)$ is $(\frac{1}{2k_n^5q_n}, \frac{1}{k_n^4}, \frac{1}{n^5}, \frac{1}{n^5})$-distributing every $\hat{I}_n \in \hat{\eta}_n$, as verified Lemma \ref{lem:7:7.3}. 

Hence, all the conditions of Proposition \ref{prop:6.6.3a} are satisfied and, consequentially, the map $(f,\mathrm{d}f)$ is weakly mixing with respect to the measures $\bar{\mu}$. Similarly, employing the same combinatorics and the construction of conjugation maps with minor modifications, we can replicate the same construction for any other compact two-dimensional connected manifold $M$ admitting a nontrivial circle action. For more details, we refer to \cite[Section 2.4]{FS}.
\end{proof}

\begin{proof}[Proof of Theorem \ref{thm:1.2}]

The proof follows along the same lines as in the smooth case. Lemma \ref{lem:4:4.2b} ensures the convergence of the sequence of analytic measure-preserving diffeomorphisms $ (\hat{f}_n)_{n\in \N} $ constructed via equations (\ref{eqn:4:4.1a}) and (\ref{eqn:4:4.1b}). It also provides the proximity condition $ d_1(\hat{f}_n, \hat{f}) < \frac{1}{2^n} $.

The existence of an invariant measure $ \bar{\mu} $ with respect to the limiting map $ (\hat{f}, \mathrm{d}\hat{f}) $ is guaranteed by Subsection \ref{sec:2.2.1} and Remark \ref{rem:5.5.5}, similar to the smooth case.

We then consider the sequence of decompositions $ \hat{\eta}_n, \hat{\nu}_n, $ and $ \hat{\hat{\nu}}_n $, along with the associated natural numbers $ (m_n) $, as described in equations (\ref{eqn:partial partition}), (\ref{eqn:partial partition_2}), (\ref{eqn:partial partition_3}), and Section \ref{sec:5.1a}, respectively. It is noted that $ \hat{\eta}_n \rightarrow \epsilon $, $ \hat{\nu}_n \rightarrow \epsilon $, and $ \hat{\hat{\nu}}_n \rightarrow \epsilon $, as outlined in Remark \ref{rem:7.7.1} and Lemmas \ref{lemm:6.16} and \ref{lem:partial_partition2}. Referring to Lemma \ref{lem:6.6.4c} and Lemma \ref{lem:7:7.3}, we deduce that for any $ \hat{I}_n \in \hat{\eta}_n $, the diffeomorphism $ (\hat{\Phi}_n, \mathrm{d} \hat{\Phi}_n) $ is $ (\gamma', \delta', \varepsilon_1', \varepsilon_2') $-distributing $\hat{I}_n$, where
$\gamma' = \gamma + \frac{1}{2^n} \leq \frac{1}{k_n^5q_n}$, $\delta' = \delta + \frac{1}{2^n} \leq \frac{2}{k_n^4}$,  and $\varepsilon_1' = \varepsilon_2' = \frac{2}{n^5} + \frac{3}{2^n} \leq \frac{1}{n}$.

Thus, all the requirements of Proposition \ref{prop:6.6.3b} are fulfilled. Its application yields that the map $ (\hat{f}, \mathrm{d}\hat{f}) $ is weakly mixing with respect to the measure $ \bar{\mu} $.
\end{proof}

\bibliographystyle{abbrv}
\bibliography{Ref}
\end{document}